\DeclareMathOperator{\id}{id}
\DeclareMathOperator{\Hom}{Hom}
\DeclareMathOperator{\rk}{rk}
\DeclareMathOperator{\odd}{odd}
\DeclareMathOperator{\ord}{ord}
\DeclareMathOperator{\Flats}{Flats}
\DeclareMathOperator{\str}{star}
\DeclareMathOperator{\Cy}{Cy}
\DeclareMathOperator{\St}{St}
\DeclareMathOperator{\diag}{diag}
\DeclareMathOperator{\Fl}{Fl}
\DeclareMathOperator{\Pe}{Pe}
\DeclareMathOperator{\Cayley}{Cayley}
\DeclareMathOperator{\Cl}{Cl}
\DeclareMathOperator{\Gr}{Gr}
\DeclareMathOperator{\pr}{pr}
\DeclareMathOperator{\clos}{clos}
\DeclareMathOperator{\compl}{compl}
\DeclareMathOperator{\conn}{conn}
\DeclareMathOperator{\sbgrps}{Sbgrps}
\DeclareMathOperator{\Facets}{Facets}
\DeclareMathOperator{\pa}{part}
\newcommand{\bd}{\bar{k}}
\newcommand{\simc}{\!\!\sim}
\newcommand{\PT}{\mathcal{PT}}
\newcommand{\CP}{\mathbb{C}P}
\newcommand{\ca}[1]{\mathcal{#1}}
\newcommand{\Hr}{\widetilde{H}}
\newcommand{\Ca}{\mathcal{C}}
\newcommand{\F}{\mathcal{F}}
\newcommand{\Zt}{\mathbb{Z}_2}
\newcommand{\I}{\mathbb{I}}
\newcommand{\Mbd}{{}^{\bd}\!M}
\newcommand{\uuu}{\mathfrak{u}}
\newcommand{\g}{\mathfrak{g}}
\newcommand{\ttt}{\mathfrak{t}}
\def\Co{\mathbb C}
\def\Ro{\mathbb R}
\def\Qo{\mathbb Q}
\def\Zo{\mathbb Z}
\def\ko{\Bbbk}
\newcounter{stmcounter}[section]
\newcounter{thcounter}
\newcounter{problcounter}
\numberwithin{equation}{section}
\theoremstyle{plain}
\newtheorem{cor}[stmcounter]{Corollary}
\newtheorem{thm}[thcounter]{Theorem}
\newtheorem{prop}[stmcounter]{Proposition}
\newtheorem{lem}[stmcounter]{Lemma}
\newtheorem{probl}[problcounter]{Problem}
\newtheorem{clai}[stmcounter]{Claim}
\theoremstyle{definition}
\newtheorem{defin}[stmcounter]{Definition}
\theoremstyle{remark}
\newtheorem{ex}[stmcounter]{Example}
\newtheorem{rem}[stmcounter]{Remark}
\newtheorem{con}[stmcounter]{Construction}
\begin{document}

\title[Torus actions on submanifolds of flag varieties]{Cluster-permutohedra and submanifolds of flag varieties with torus actions}
%\shorttitle{Torus actions on submanifolds of flag varieties}

\author{Anton Ayzenberg}
\address{Faculty of computer science, National Research University Higher School of Economics, Moscow, Russian Federation}
\email{ayzenberga@gmail.com}

\author{Victor Buchstaber}
\address{V.A. Steklov Mathematical Institute, RAS, and Faculty of computer science, National Research University Higher School of Economics, Moscow, Russian Federation}
\email{buchstab@mi-ras.ru}

\thanks{The article was prepared within the framework of the HSE University Basic Research Program}

\keywords{isospectral Hermitian matrices, sparse matrices, graphicahedron, cluster-permutohedron, compact torus action, faces of torus actions, full flag variety, partial flag variety, Grassmann manifold}

\subjclass[2020]{Primary: 57S12, 14M15, 05E10, 05E18, 06A15 Secondary: 05B35, 55R20, 55T10, 20B35, 06A07, 57S15, 05C15}

\begin{abstract}
In this paper we describe a relation between the notion of graphicahedron, introduced by Araujo-Pardo, Del R\'{\i}o-Francos, L\'{o}pez-Dudet, Oliveros, and Schulte in 2010, and toric topology of manifolds of sparse isospectral Hermitian matrices. More precisely, we recall the notion of a cluster-permutohedron, a certain finite poset defined for a simple graph $\Gamma$. This poset is build as a combination of cosets of the symmetric group, and the geometric lattice of the graphical matroid of $\Gamma$. This poset is similar to the graphicahedron of $\Gamma$, in particular, 1-skeleta of both posets are isomorphic to Cayley graphs of the symmetric group. We describe the relation between cluster-permutohedron and graphicahedron using Galois connection and the notion of a core of a finite topology. We further prove that the face poset of the natural torus action on the manifold of isospectral $\Gamma$-shaped Hermitian matrices is isomorphic to the cluster-permutohedron. Using recent results in toric topology, we show that homotopy properties of graphicahedra may serve an obstruction to equivariant formality of isospectral matrix manifolds. We introduce a generalization of a cluster-permutohedron and describe the combinatorial structure of a large family of manifolds with torus actions, including Grassmann manifolds and partial flag manifolds.
\end{abstract}

\maketitle

\section{Introduction}\label{secIntro}

Let $\Gamma$ be a simple graph with a vertex set $[n]=\{1,2,\ldots,n\}$ and an edge-set $E_\Gamma$. It is assumed that $\Gamma$ is connected. Four objects associated with $\Gamma$ are in the focus of our paper: three of them are combinatorial and the last one is topological, necessary definitions are given in the latter sections.
\begin{enumerate}
\item The Cayley graph $\Cayley_\Gamma$ of the symmetric group $\Sigma_n$ with the set of transpositions $\{(i, j)\mid \{i,j\}\in E_\Gamma\}$ chosen as the generating set.
\item The \emph{graphicahedron} $\Gr_\Gamma$.
\item The \emph{cluster-permutohedron} $\Cl_\Gamma$.
\item The manifold $M_{\Gamma,\lambda}$ of $\Gamma$-shaped Hermitian matrices having a generic fixed spectrum $\lambda$. This manifold carries a natural effective action of a compact $(n-1)$-dimensional torus $T$. We call manifolds $M_{\Gamma,\lambda}$ isospectral matrix manifolds.
\end{enumerate}

The graphicahedron is a graded poset (see Definition~\ref{defGraphicahedron}), it was introduced in~\cite{Araujo-Pardo_Rio-Francos_Lopez-Dudet_Oliveros_Schulte:2010} as a higher dimensional structure extending the Cayley graph $\Cayley_\Gamma$. Cluster-permutohedra were introduced by the authors in~\cite{Ayzenberg_Buchstaber_MS:2021} in relation with the study of toric topology of the manifolds $M_{\Gamma,\lambda}$. In that paper, we assumed $\Gamma$ is a tree, in which case the cluster-permutohedron $\Cl_\Gamma$ is a dually simplicial poset. In the case of a tree, the torus action on $M_{\Gamma,\lambda}$ has complexity zero, so its orbit space $Q_{\Gamma,\lambda}=M_{\Gamma,\lambda}/T$ is a manifold with corners. We proved that the face poset of $Q_{\Gamma,\lambda}$ is isomorphic to the cluster-permutohedron $\Cl_\Gamma$. At the time we published the paper~\cite{Ayzenberg_Buchstaber_MS:2021}, we were unaware of the papers~\cite{Araujo-Pardo_Rio-Francos_Lopez-Dudet_Oliveros_Schulte:2010,Rio-Francos_Hubard_Oliveros_Schulte:2012} studying graphicahedra which were certainly related to our research.

The current paper aims to bring together the theory of graphicahedra and the theory of cluster-permutohedra for general graphs. We apply some known results on graphicahedra to study equivariant formality of isospectral matrix manifolds. Our previous work~\cite{Ayzenberg_Buchstaber_IMRN:2021} on the relation between isospectral matrix manifolds and Hessenberg varieties recently gained attention of the specialists in representation theory and symmetric functions~\cite{Abe_Horiguchi:2019,Masuda_Sato:2022,Precup_Sommers:2022} in context of the possible relations to e-positivity and Stanley--Stembridge conjecture. We hope that the current paper may also reveal new connections between algebraic combinatorics and toric topology. Our result describes the relations between objects listed above.

\begin{thm}\label{thmMainAll}
The following hold true.
\begin{enumerate}
  \item For any graph $\Gamma$ and any generic spectrum $\lambda$, the face poset of the natural torus action on $M_{\Gamma,\lambda}$ is isomorphic to the cluster-permutohedron $\Cl_\Gamma$.
  \item $\Cl_\Gamma$ is a locally geometric poset (see Definition~\ref{defLocGeomPoset}). Its 1-skeleton is isomorphic to the Cayley graph $\Cayley_\Gamma$. For any element $x\in\Cl_\Gamma$ of rank $0$, the upper order ideal $(\Cl_\Gamma)_{\geqslant x}$ is isomorphic to the geometric lattice of the graphical matroid of $\Gamma$.
  \item For any graph $\Gamma$, there exists a Galois connection $\Cl_\Gamma\rightleftarrows \Gr_\Gamma$ preserving skeleta (the skeleta of $\Gr_\Gamma$ should be defined appropriately, see Construction~\ref{conSkeleta}). The skeleton $(\Cl_\Gamma)_j$ is the core of the skeleton $(\Gr_\Gamma)_j$ for $j<\rk \Cl_\Gamma$.
  \item Let $g$ be the girth of $\Gamma$, that is the minimal cycle length. Then the skeleta $(\Cl_\Gamma)_{g-2}$ and $(\Gr_\Gamma)_{g-2}$ are isomorphic. In particular, when $\Gamma$ is a tree (in which case $g$ is assumed $+\infty$) there exists a canonical isomorphism $\Cl_\Gamma\cong\Gr_\Gamma$.
  \item The torus action on $M_{\Gamma,\lambda}$ is $(g-1)$-independent (see Definition~\ref{defJindependent}).
\end{enumerate}
\end{thm}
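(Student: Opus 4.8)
The plan is to reduce $(g-1)$-independence, in the sense of Definition~\ref{defJindependent}, to a statement about the isotropy weights at the (isolated) fixed points of the $T$-action, and then to recognize that statement as the classical description of linear dependences among roots $\epsilon_i-\epsilon_j$ in terms of cycles of $\Gamma$. First I would pin down $M_{\Gamma,\lambda}^T$: a $\Gamma$-shaped Hermitian matrix fixed under conjugation by the whole (generic) diagonal torus must itself be diagonal, and the diagonal matrices with spectrum $\lambda$ are exactly the $n!$ matrices $\diag(\lambda_{\sigma(1)},\dots,\lambda_{\sigma(n)})$, $\sigma\in\Sigma_n$; these are isolated (by part (1) they are the rank-$0$ elements of $\Cl_\Gamma$, but here only isolatedness is used). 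Writing $\mathfrak t^*=\{x\in\Ro^n:\sum_k x_k=0\}$ for the weight space of $T=T^n/\Delta$ (with $\Delta$ the diagonal circle), the tangent space at such a fixed point $p$ is spanned by the off-diagonal coordinates $a_{ij}$, $\{i,j\}\in E_\Gamma$: the spectrum map is a submersion at $p$ whose kernel consists precisely of the off-diagonal directions (first-order perturbation theory: an off-diagonal perturbation does not move the eigenvalues of a diagonal matrix to first order), and since $p$ is $T$-fixed the induced action on $T_pM_{\Gamma,\lambda}$ is $B\mapsto tBt^{-1}$, acting on the $a_{ij}$-line with weight $\epsilon_i-\epsilon_j$. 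Thus at every fixed point the multiset of tangent weights equals $\{\epsilon_i-\epsilon_j:\{i,j\}\in E_\Gamma\}$, canonically indexed by $E_\Gamma$.

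The heart of the matter is then the classical fact that for $S\subseteq E_\Gamma$ the vectors $\{\epsilon_i-\epsilon_j:\{i,j\}\in S\}$ are linearly independent in $\mathfrak t^*$ if and only if the subgraph $([n],S)$ is a forest; equivalently, this collection of vectors represents the graphical matroid of $\Gamma$, whose circuits are exactly the edge sets of cycles of $\Gamma$. Consequently the minimal size of a linearly dependent subcollection of the tangent weights at a fixed point equals the girth of the graphical matroid, which is $g=\girth\Gamma$. In particular any $g-1$ of the tangent weights at any fixed point are linearly independent, which by Definition~\ref{defJindependent} is precisely $(g-1)$-independence of the $T$-action on $M_{\Gamma,\lambda}$. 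For sharpness I would record that a shortest cycle $C$ of $\Gamma$ produces $g$ tangent weights for which a suitable signed sum of the $\epsilon_i-\epsilon_j$ over the edges of $C$ vanishes, so the action is not $g$-independent when $\Gamma$ has finite girth; and when $\Gamma$ is a tree the tangent weights at each fixed point form a basis of $\mathfrak t^*$, recovering the locally standard, complexity-zero picture of \cite{Ayzenberg_Buchstaber_MS:2021} and consistent with parts (1)--(2).

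I do not expect a genuine obstacle here; the one point that must be handled with care is the initial reduction itself, namely that $(g-1)$-independence is a condition on the fixed-point isotropy data alone and is not to be tested at arbitrary points of $M_{\Gamma,\lambda}$, where the isotropy weights genuinely degenerate (already for $\Gamma=K_3$ the two normal weights of the face through a block-diagonal matrix coincide). Once Definition~\ref{defJindependent} has been unwound to the fixed-point statement, the remainder of the argument is just the girth estimate for the graphical matroid together with the elementary tangent-space computation above.
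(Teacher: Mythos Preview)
Your proposal addresses only part~(5) of the theorem; parts (1)--(4) are not argued (you invoke (1) and (2) as if already established rather than prove them). If the intended scope was the full statement, that is a substantial gap; the paper treats each part as a separate proposition (Propositions~\ref{propFacePoset}, \ref{propClusProperties}, \ref{propGalois}, \ref{propGirthCoincidence}, \ref{propIndependency}), and parts (1)--(3) in particular require real work: the identification of stabilizer subgroups with clusterings and of connected components of fixed-point sets with assignments, and the core argument for $(\Cl_\Gamma)_j$ inside $(\Gr_\Gamma)_j$.

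For part~(5) itself, your approach is essentially identical to the paper's. The paper first shows (Proposition~\ref{propTangRepMatrices}) that the tangent weights at any fixed point are $\{\epsilon_i-\epsilon_j:\{i,j\}\in E_\Gamma\}$, via exactly the off-diagonal tangent-space identification you describe, and then (Proposition~\ref{propIndependency}) observes that a linear dependence among $j\leqslant g-1$ such vectors would force a cycle of length $\leqslant g-1$ in $\Gamma$, contradicting the definition of girth. Your formulation through the graphical matroid (independent iff the edge set is a forest; smallest circuit has size $g$) is the same argument in different words. Two small remarks: your concern about ``reducing'' $j$-independence to a fixed-point condition is unnecessary, since Definition~\ref{defJindependent} is already stated purely in terms of tangent weights at fixed points; and your sharpness observation (a shortest cycle produces $g$ dependent weights, so the action is not $g$-independent when $g<\infty$) is correct and a nice addition not made explicit in the paper.
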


Notice that both the theory of graphicahedra and the theory of cluster-permutohedra originated from the particular important example. If $\Gamma=\I_n$ is the path graph, both $\Gr_{\I_n}$ and $\Cl_{\I_n}$ are isomorphic to the poset of faces of the permutohedron $\Pe^{n-1}$. The 1-skeleton of $\Pe^{n-1}$ is isomorphic to the Cayley graph of $\Sigma_n$ with the standard set of generators $(1,2),(2,3),\ldots,(n-1,n)$, and $\Gr_{\I_n}$ is considered as the higher-dimensional structure over this Cayley graph. On the other hand, $\Cl_{\I_n}$ is isomorphic to the face poset of the manifold of isospectral tridiagonal Hermitian matrices. This fact was proved in the classical work of Tomei~\cite{Tomei:1984} who considered the real version of this manifold. We call the manifold $M_{\I_n,\lambda}$ the Hermitian Tomei manifold. The appearance of a convex permutohedron has gained a symplectic explanation in the work of Bloch--Flaschka--Ratiu~\cite{Bloch_Flaschka_Ratiu:1990}. All items of Theorem~\ref{thmMainAll} are well known for the path graph $\I_n$.

After introducing all notions and proving Theorem~\ref{thmMainAll}, we switch to the applications of some known results about graphicahedra in toric topology in Section~\ref{secNonFormality}. Our goal is not to prove new results there, but rather establish new relations between different areas of mathematics.

We concentrate on two types of graphs: the cycle graphs $\Cy_n$, $n\geqslant 3$, and the complete bipartite graph $K_{1,3}$ (which is also called the star graph $\St_3$ or the claw-graph in various sources), and study the topology of the corresponding graphicahedra and cluster-permutohedra. Notice that $|\Gr_\Gamma|$ is contractible. Indeed, $\Gr_\Gamma$ always has the top element $\hat{1}$, so its geometrical realization is a cone. We should drop the top element if we want the topological study of our poset to make sense. The following statement was proved in~\cite{Rio-Francos_Hubard_Oliveros_Schulte:2012}.

\begin{prop}[\cite{Rio-Francos_Hubard_Oliveros_Schulte:2012}]\label{propSymGraphicahedra}\hfill
\begin{enumerate}
  \item $\Gr_{\St_3}\setminus\{\hat{1}\}$ is the face poset of a cell subdivision of $T^2$. This subdivision is the quotient of the hexagonal tiling of the plane $\Ro^2$ by a certain pair of chiral vectors~\footnote{The naming ``chiral vectors'' for the vectors of periods is borrowed from chemistry, where it is used in the combinatorial study of graphen and nanotubes, see~\cite{Buchstaber_Erokhovets:2016}}.
  \item $\Gr_{\Cy_n}\setminus\{\hat{1}\}$, for $n\geqslant 3$, is a cell subdivision of an $(n-1)$-dimensional torus $T^{n-1}$. This subdivision arises as the quotient of the regular permutohedral tiling of the space $\Ro^{n-1}$ by a certain lattice.
\end{enumerate}
\end{prop}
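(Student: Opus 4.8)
The plan is to present the groups in question as quotients of affine symmetric groups by lattices of translations, and then read off the face poset of the resulting subdivision of a torus; this gives a proof of the result of \cite{Rio-Francos_Hubard_Oliveros_Schulte:2012} quoted above. Start with $\Cy_n$. Label $E_{\Cy_n}=\{e_1,\dots,e_n\}$, $e_i=\{i,i+1\}$ with indices modulo $n$, and set $s_i=(i,i+1)$. These transpositions satisfy exactly the Coxeter relations of affine type $\tilde A_{n-1}$ ($s_i^2=1$; $(s_is_j)^3=1$ if $e_i\cap e_j\neq\varnothing$; $(s_is_j)^2=1$ otherwise), so $s_i\mapsto s_i$ defines a surjection $\phi\colon\widetilde\Sigma_n=W(\tilde A_{n-1})\twoheadrightarrow\Sigma_n$; this is the classical quotient by the translation (root) lattice $Q=Q(A_{n-1})\cong\Zo^{n-1}$, which acts freely by translations on $\Ro^{n-1}$ with $\Ro^{n-1}/Q\cong T^{n-1}$. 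The regular permutohedral tiling $\mathcal P$ of $\Ro^{n-1}$ (for $n=3$, the hexagonal tiling) is the polytopal complex whose faces are the cosets $w\widetilde W_J$, $w\in\widetilde W$, $J\subsetneq S=\{s_1,\dots,s_n\}$ --- every proper standard parabolic $\widetilde W_J$ of $\tilde A_{n-1}$ being finite, each such face is a bounded permutohedral face --- graded by $\dim(w\widetilde W_J)=|J|$ and ordered by $w\widetilde W_J\leqslant v\widetilde W_{J'}\iff J\subseteq J'$ and $w\widetilde W_J\subseteq v\widetilde W_{J'}$.

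I would now identify face posets. Since $Q$ is torsion--free and acts by translations, it acts freely on the faces of $\mathcal P$, so $\mathcal P/Q$ is a cell subdivision of $T^{n-1}$ whose faces are the $Q$--orbits of faces of $\mathcal P$; as $Q\trianglelefteq\widetilde W$, the orbit of $w\widetilde W_J$ equals $\phi^{-1}\!\bigl(\overline{w}\,\phi(\widetilde W_J)\bigr)$, so these orbits correspond bijectively to pairs $(J,\ \sigma\,\phi(\widetilde W_J))$ with $J\subsetneq S$ and $\sigma\phi(\widetilde W_J)$ a coset in $\Sigma_n$. Writing $s_i\leftrightarrow e_i$ for the induced bijection $S\to E_{\Cy_n}$ (extended to subsets, $J\mapsto A_J$), one has $\phi(\widetilde W_J)=\langle(a,b):\{a,b\}\in A_J\rangle=\Sigma_{A_J}$ (a quotient map sends generators to generators) and $J\subsetneq S\iff A_J\subsetneq E_{\Cy_n}$; and the order on $Q$--orbits descends from $\mathcal P$ to $(A_J,\sigma\Sigma_{A_J})\leqslant(A_{J'},\tau\Sigma_{A_{J'}})\iff A_J\subseteq A_{J'}$ and $\sigma\Sigma_{A_J}\subseteq\tau\Sigma_{A_{J'}}$. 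Comparing with Definition~\ref{defGraphicahedron} and recalling that $\hat 1=(E_{\Cy_n},\Sigma_n)$ is the unique element of $\Gr_{\Cy_n}$ with full first coordinate, this face poset is precisely $\{(A,\sigma\Sigma_A):A\subsetneq E_{\Cy_n}\}=\Gr_{\Cy_n}\setminus\{\hat 1\}$. This proves item (2), the ``certain lattice'' being $Q(A_{n-1})$.

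For $\St_3$ the same scheme works after one further congruence. Label $E_{\St_3}=\{e_1,e_2,e_3\}$, $e_i=\{0,i\}$, $s_i=(0\,i)$; no two of the $s_i$ commute (their product is a $3$--cycle), so all three pairwise relations read $(s_is_j)^3=1$, the $s_i$ satisfy the relations of $\tilde A_2$, and $s_i\mapsto s_i$ defines a surjection $W(\tilde A_2)\twoheadrightarrow\Sigma_4$. Its kernel is the level--$2$ congruence lattice $2Q(A_2)$: under the exceptional isomorphism $\Sigma_4\cong\mathrm{AGL}_2(\Fo_2)=W(A_2)\ltimes\Fo_2^{\,2}$, with $W(A_2)\cong\mathrm{GL}_2(\Fo_2)$ acting on $Q(A_2)/2Q(A_2)$ (the three reflections of $W(A_2)$ mapping to the three transvections), this surjection is the reduction $W(A_2)\ltimes Q(A_2)\to W(A_2)\ltimes\bigl(Q(A_2)/2Q(A_2)\bigr)$; equivalently, $(s_1s_2s_3)^4$ is the fourth power of the glide--reflection Coxeter element of $W(\tilde A_2)$, hence a translation, whose normal closure is $2Q(A_2)$. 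Since $2Q(A_2)$ is again a torsion--free translation lattice, the argument of the previous two paragraphs applies verbatim with $\widetilde W=W(\tilde A_2)$, $Q$ replaced by $2Q(A_2)$, $\Sigma_n$ replaced by $\Sigma_4$, and $\mathcal P$ the hexagonal tiling of $\Ro^2$: it identifies $\Gr_{\St_3}\setminus\{\hat 1\}$ with the face poset of the hexagonal subdivision of $T^2=\Ro^2/2Q(A_2)$. A $\Zo$--basis of $2Q(A_2)$, read in the natural coordinates of the hexagonal tiling, is the announced pair of chiral vectors; this proves item (1). (As $\St_3$ is a tree, Theorem~\ref{thmMainAll}(4) additionally identifies this poset with $\Cl_{\St_3}$.)

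The \emph{crux}, and the only genuinely nontrivial step, is the identification of the two kernels --- equivalently, that the affine Coxeter relations plus a single extra relation already present the finite group: $W(\tilde A_{n-1})/\langle\!\langle Q(A_{n-1})\rangle\!\rangle\cong\Sigma_n$ and $W(\tilde A_2)/\langle\!\langle(s_1s_2s_3)^4\rangle\!\rangle\cong\Sigma_4$. The first is classical, being the standard description of the affine symmetric group. The second is where the coincidence $\Sigma_4\cong\mathrm{AGL}_2(\Fo_2)$ is essential and a short explicit (coset--enumeration--type) verification is required; granting it, everything else --- the freeness of the lattice action, the bijection between orbits of faces and cosets, and the comparison with Definition~\ref{defGraphicahedron} --- is formal. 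An alternative, more hands--on route to item (2) is to glue the $n$ permutohedra $\Gr_{\I_n}$ indexed by the spanning paths $E_{\Cy_n}\setminus\{e_k\}$ along their common lower faces and check directly that the result is a closed $(n-1)$--manifold locally modelled on the permutohedral tiling, hence $T^{n-1}$; but the affine--Weyl--group description makes both the torus and the lattice transparent at once.
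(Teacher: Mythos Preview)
The paper does not itself prove this proposition: it is quoted from \cite{Rio-Francos_Hubard_Oliveros_Schulte:2012}, and the analogous statements for $\Cl$ reappear as Propositions~\ref{propStarProps} and~\ref{propCycleProps}, again with citation and without argument. The closest the paper comes to a proof is the Construction preceding Proposition~\ref{propCycleProps}, which names the lattice $N'=\langle\alpha_i-\alpha_{i+1}\rangle$ for the cycle case; this is precisely the root lattice $Q(A_{n-1})$, in agreement with your identification.

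Your affine-Weyl-group argument is correct and is essentially the approach of the cited sources. One remark on the $\St_3$ case: your $\mathrm{AGL}_2(\Fo_2)$ explanation for why the kernel equals $2Q(A_2)$ is slightly oblique, because under the standard splitting $W(\tilde A_2)=W(A_2)\ltimes Q$ the two ``finite'' generators land in the linear part $W(A_2)$, whereas none of $(0,1),(0,2),(0,3)$ lies in the linear part $\mathrm{GL}_2(\Fo_2)\subset\mathrm{AGL}_2(\Fo_2)$ (each fixes two points of $\Fo_2^{\,2}$, so each has nontrivial translational component). A cleaner route, avoiding any choice of splitting: the composite $W(\tilde A_2)\xrightarrow{\phi}\Sigma_4\twoheadrightarrow\Sigma_4/V_4\cong\Sigma_3$ sends the three generators to the three transpositions of $\Sigma_3$, hence coincides with the canonical quotient $W(\tilde A_2)\to W(\tilde A_2)/Q$; therefore $\ker\phi\subset Q$, the image $\phi(Q)$ is the abelian normal subgroup $V_4\cong(\Zo/2)^2$, and any surjection $\Zo^2\twoheadrightarrow(\Zo/2)^2$ has kernel exactly $2\Zo^2$. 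This dispenses with the coset-enumeration verification you flag as needed.
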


In our paper we give a topological application of this proposition. More precisely, we study equivariant formality property of isospectral matrix manifolds. The property of equivariant formality is the basis of GKM theory. Equivariant formality holds automatically for algebraic torus actions on smooth projective varieties (this is the setting studied originally by  Goresky, Kottwitz, and MacPherson). However, some natural manifolds with a compact torus action may lack equivariant formality, in which case GKM-theory cannot be applied to describe their cohomology. So far, it is important to understand, whether a manifold is equivariantly formal or not. At the first place, we are interested in the manifolds $M_{\Gamma,\lambda}$, and generally they are not complex projective varieties.

For both types of graphs described in Proposition~\ref{propSymGraphicahedra}, the toroidal nature of the poset implies $H^1(\Gr_{\Gamma}\setminus\{\hat{1}\};\Zo)\neq 0$, and therefore
\begin{equation}\label{eqNonAcyclic}
H^1((\Cl_{\Gamma})_2;\Zo)\neq 0
\end{equation}
for $\Gamma=\St_3$ or $\Cy_n$, $n\geqslant 4$, according to general connections between $\Gr$ and $\Cl$ given by Theorem~\ref{thmMainAll}. In a recent paper~\cite{Ayzenberg_Masuda_Solomadin:2022} we proved that, whenever a torus action on a manifold $X$ is cohomologically equivariantly formal, the skeleta of its face poset $S(X)$ satisfy certain acyclicity condition, see Proposition~\ref{propSkeletonAcyclic}. Formula~\eqref{eqNonAcyclic} violates this condition for $\Gamma=\St_3$ or $\Cy_n$, $n\geqslant 4$. Therefore we conclude that the isospectral matrix manifolds $M_{\St_3,\lambda}$ and $M_{\Cy_n,\lambda}$, $n\geqslant 4$, are not equivariantly formal.

This result is already known actually. The 6-dimensional manifold $M_{\St_3,\lambda}$ was studied in detail by the authors in~\cite{Ayzenberg_Buchstaber_MS:2021}, while $M_{\Cy_n,\lambda}$ was studied by the first author in~\cite{Ayzenberg:2020}. To prove non-formality, we also studied the face structures of these manifolds, in particular, we independently proved Proposition~\ref{propSymGraphicahedra} in our papers. It should be noticed that the simplicial poset dual to $\Gr_{\Cy_n}\setminus\{\hat{1}\}$ is vertex-minimal among all simplicial cell subdivisions of the torus $T^{n-1}$, which relates this object to the theory of crystallizations~\cite{Ferri_Gagliardi_Grasselli:1986}. The particular poset $\Gr_{\Cy_3}\setminus\{\hat{1}\}$ --- the subdivision of $T^2$ into 3 hexagons, representing the embedding of $K_{3,3}$ into $T^2$, --- and its relation to isospectral matrix spaces, first appeared in the work of van Moerbeke~\cite{vanMoerbeke:1976} on integrable dynamical systems. A construction similar to the graphicahedron $\Gr_{\Cy_n}$ was introduced by Panina~\cite{Panina:2015} by the name \emph{cyclopermutohedron}. In particular, the second item of Proposition~\ref{propSymGraphicahedra} was also proved in~\cite{Nekrasov_Panina_Zhukova:2016} with the use of discrete Morse theory.

Finally, we proceed by generalizing cluster-permutohedra. The generalization allows to describe face posets of a big family of torus-stable submanifolds in partial flag varieties. This family includes all isospectral matrix manifolds, partial flag varieties (including Grassmann manifolds), regular semisimple Hessenberg varieties, their generalizations (which may fail to be complex varieties in general), and their analogues in the partial flag varieties. The main contribution here is the universal construction of \emph{symmetric identification sets}, which underlines similarities between cluster-permutohedra, graphicahedra, and a bunch of objects studied in toric topology, such as small covers, quasitoric manifolds, and moment-angle manifolds. Our generalization of cluster-permutohedron may be considered as a higher-dimensional version of the Schreier coset graphs~\cite{Schreier:1927} of the symmetric group, --- in a similar way as the usual cluster-permutohedra appear to be the higher-dimensional versions of Cayley graphs.

The paper has the following organization. Section~\ref{secDefs} contains all the required definitions, separated into three subjects: discrete mathematics, toric topology, and isospectral matrix spaces. We also describe the general construction of twin submanifolds in a full flag variety which first appeared in our work~\cite{Ayzenberg_Buchstaber_IMRN:2021}. The twin of $M_{\Gamma,\lambda}$ is a certain generalization of a regular semisimple Hessenberg variety. In Section~\ref{secProofs} we prove all points of Theorem~\ref{thmMainAll}. In Section~\ref{secNonFormality} we deduce non-formality of several types of matrix manifolds using some known results of toric topology and structural results about graphicahedra and cluster-permutohedra. The setting is generalized to arbitrary semisimple Lie types, and we pose several general problems in this setting. In Section~\ref{secIdentifications} we discuss the results and provide the unifying construction of symmetric identification sets. This construction is applied, in particular, to describe face posets of a large family of manifolds, including, in particular, Grassmann manifolds, partial flags, regular semisimple Hessenberg manifolds, and their partial flags' analogues.

\section{Definitions}\label{secDefs}

\subsection{Posets}\label{subsecCombinatorics}

Let us recall several definitions relevant to our work. We deal with partially ordered sets (posets for short) which are always assumed finite. For a poset $S$ and an element $s\in S$ we denote by $S_{\geqslant s}$ the upper order ideal $\{t\in S\mid t\geqslant s\}$, and similarly for $S_{\leqslant s}$, $S_{>s}$, $S_{<s}$. A minimal element is an element $x\in S$ such that there is no $s<x$ in $S$. The bottom (otherwise called the least) element $\hat{0}$ is the element with the property $\hat{0}\leqslant s$ for any $s\in S$. Similarly, one defines maximal elements, and the top (the greatest) element~$\hat{1}$. Minimal and maximal elements always exist. If the top element exists, it is unique, and similarly for the bottom element. Minimal elements will usually be denoted by $x$.

For any poset $S$ one can construct the simplicial complex $\ord(S)$, the order complex. Vertices of $\ord(S)$ are all the elements of $S$, and simplices are chains (linearly ordered subposets) of $S$. The geometrical realization of $\ord(S)$ is called the geometrical realization of $S$ and denoted by $|S|$. This is a standard way of turning poset into a compact Hausdorff topological space. If $S$ has a top or a bottom element, its order complex is a cone, hence $|S|$ is contractible.

An element $s$ is said to cover an element $t$ if $t<s$ and there is no element $r\in S$ such that $t<r<s$. A poset $S$ is called graded if there exists a function $\rk\colon S\to \Zo_{\geqslant 0}$ such that $\rk(x)=0$ for any minimal element $x\in S$, and $\rk s=\rk t+1$ whenever $s$ covers $t$. The rank of a graded poset is the maximal rank of its elements. If a graded poset has the bottom element $\hat{0}$, then all elements covering $\hat{0}$ (that are all elements of rank $1$) are called atoms.

A poset is called a lattice if there exist $\hat{0}$, $\hat{1}$, as well as the least upper bounds (joins) and the greatest lower bounds (meets) of any sets of elements. A graded poset $S$ is called a \emph{geometric lattice} if it satisfies
\begin{enumerate}
  \item Every element $s\in S$ is a join of some set of atoms (which essentially means that $S$ can be considered a subposet of the Boolean cube generated by the set of atoms).
  \item The rank function satisfies $\rk s+\rk t\geqslant \rk(s\vee t)+\rk (s\wedge t)$.
\end{enumerate}

Geometric lattices appear as the posets of flats of matroids. We will not need the definition of general matroids, but give the construction of geometric lattices of flats of linear and graphical matroids. General theory can be found in~\cite{Oxley:1992}.

\begin{con}\label{conLinearMatroids}
A collection $\beta=\{\beta_1,\ldots,\beta_m\}$ of vectors in a vector space $U$ (over any field), is called \emph{a linear matroid}. A \emph{flat} of $\beta$ is an intersection of $\beta$ with any vector subspace $\Pi\subseteq U$. The flats are naturally ordered by inclusion, and the poset of flats is denoted $\Flats(\beta)$. Each flat comes equipped with its rank: the rank of a flat $A$ is given by the dimension of its linear span. This makes $\Flats(\beta)$ a graded poset. Actually, $\Flats(\beta)$ is a geometric lattice.
\end{con}

\begin{con}\label{conGeomLatticeOfGraph}
If $\Gamma=([n],E_\Gamma)$ is a simple graph, one can define a linear matroid as follows. For any field $\ko$ consider the arithmetic vector space $\ko^n$ with the basis $e_1,\ldots,e_n$. Consider the matroid
\[
\beta(\Gamma)=\{e_i-e_j\mid \{i,j\}\in E_\Gamma\}.
\]
The lattice of flats $\Flats(\beta(\Gamma))$ is denoted simply by $\Flats(\Gamma)$ and called \emph{the geometric lattice of the graph} $\Gamma$.

The greatest flat $\hat{1}$ of $\Flats(\Gamma)$ is the linear span of $\beta(\Gamma)$, which coincides with the hyperplane $\left\{\sum x_i=0\right\}\subset\ko^n$ (for connected graphs). Hence $\rk\Flats(\Gamma)=n-1$.
\end{con}

We will also need the following definition introduced in~\cite{Ayzenberg_Cherepanov:2022} as a far-reaching generalization of (dually) simplicial posets.

\begin{defin}\label{defLocGeomPoset}
A poset $S$ is called \emph{locally geometric} if it has the top element $\hat{1}$, and for any element $s\in S$, the upper order ideal $S_{\geqslant s}$ is a geometric lattice.
\end{defin}

Let us recall another set of definitions and constructions. Recall that a map $f\colon S\to T$ between two posets is called monotonic if $s_1\leqslant s_2$ in $S$ implies $f(s_1)\leqslant f(s_2)$ in $T$. All maps of posets are assumed monotonic in the following. If there are two maps $f\colon S\to T$ and $g\colon T\to S$, we simply write them as $f\colon S\rightleftarrows T\colon g$.

\begin{defin}\label{defGaloisCon}
A pair of maps $f\colon S\rightleftarrows T\colon g$ is called a \emph{Galois connection} (an order preserving Galois connection), if one of the two equivalent properties holds
\begin{enumerate}
  \item For any $s\in S$ and $t\in T$ the inequality $f(s)\leqslant t$ is equivalent to $s\leqslant g(t)$.
  \item For any $s\in S$ there holds $s\leqslant g(f(s))$, and for any $t\in T$ there holds $f(g(t))\leqslant t$.
\end{enumerate}
\end{defin}

If posets are considered as small categories, then the monotonic maps are functors, and Galois connection is a pair of adjoint functors. A particular case of Galois connection appears quite often both in theoretical studies and in information retrieval theory.

\begin{con}\label{conClosureOperator}
Let $T$ be a subposet of $S$, and $\iota\colon T\to S$ the canonical inclusion. Suppose that there exists $\pi\colon S\to T$ such that $\pi\colon S\rightleftarrows T\colon \iota$ is a Galois connection. In this case it is said that there is a Galois insertion of $T$ in $S$. The composite map $\clos=\iota\circ\pi\colon S\to S$ satisfies the properties
\begin{enumerate}
  \item $s\leqslant\clos(s)$;
  \item $s_1\leqslant s_2$ implies $\clos(s_1)\leqslant\clos(s_2)$;
  \item $\clos(\clos(s))=\clos(s)$.
\end{enumerate}
In other words, $\clos$ is a closure operator on $S$, the notion well-known in general topology and logic. So far, $T$ coincides with the set of closed elements of $S$ under $\clos$. On the contrary, given any closure operator $\clos\colon S\to S$, one can construct a Galois insertion of the subset of closed elements.
\end{con}

The variations of the next lemma are known. In particular, Barmak~\cite{Barmak:2011} proved a variant of this lemma for simple homotopy equivalence. We give an idea of proof for completeness of exposition.

\begin{lem}\label{lemGaloisHomotopy}
If $f\colon S\rightleftarrows T\colon g$ is a Galois connection, then the induced maps of geometrical realizations $f\colon |S|\rightleftarrows |T|\colon g$ deliver homotopy equivalence between $|S|$ and $|T|$.
\end{lem}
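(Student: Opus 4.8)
The plan is to show that a Galois connection $f\colon S\rightleftarrows T\colon g$ induces maps whose composites are homotopic to the respective identities, which gives the homotopy equivalence. The standard tool is the poset version of the ``contiguity'' or ``straight-line homotopy'' lemma: if two monotonic maps $h_1,h_2\colon P\to Q$ satisfy $h_1(p)\leqslant h_2(p)$ for every $p\in P$ (equivalently, they are comparable in the pointwise order), then $|h_1|\simeq|h_2|$. First I would establish this sublemma. Its proof is a one-line application of the standard fact that the order complex functor $\ord(-)$ sends natural transformations between monotonic maps to homotopies: a pointwise inequality $h_1\leqslant h_2$ is exactly a natural transformation of functors $P\to Q$ (viewing posets as categories), and geometric realization of the nerve of a functor category turns natural transformations into homotopies (equivalently, one can construct the homotopy explicitly by subdividing and using the map $P\times[0,1]\to Q$ that is $h_1$ on $P\times\{0\}$ and $h_2$ on $P\times\{1\}$, extended via the order-preserving map $P\times\{0,1\}\to Q$, $(p,0)\mapsto h_1(p)$, $(p,1)\mapsto h_2(p)$, which is monotonic precisely because $h_1(p)\leqslant h_2(p)$).

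With the sublemma in hand, I would finish as follows. By Definition~\ref{defGaloisCon}(2), for every $s\in S$ we have $s\leqslant g(f(s))$, i.e.\ the monotonic maps $\id_S$ and $g\circ f$ on $S$ satisfy $\id_S(s)\leqslant (g\circ f)(s)$ pointwise; hence $|g|\circ|f|=|g\circ f|\simeq|\id_S|=\id_{|S|}$. Dually, for every $t\in T$ we have $f(g(t))\leqslant t$, so $f\circ g$ and $\id_T$ are comparable pointwise (with the inequality the other way, which the sublemma handles symmetrically), giving $|f|\circ|g|=|f\circ g|\simeq\id_{|T|}$. Therefore $|f|$ and $|g|$ are mutually inverse up to homotopy, and $|S|\simeq|T|$.

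The only mildly delicate point — the ``main obstacle,'' if one wishes to be careful — is the sublemma that a pointwise inequality of monotonic maps yields a homotopy of realizations; everything else is formal manipulation of Definition~\ref{defGaloisCon}. This sublemma is classical (it is, e.g., the poset-theoretic incarnation of Segal's observation that a natural transformation induces a homotopy of classifying spaces, and appears in Quillen's work and in Björner's survey of topological combinatorics), so I would simply cite it or sketch the explicit construction above rather than belabor the subdivision argument. One subtlety worth a remark: $|f|$ and $|g|$ are the simplicial maps $\ord(S)\to\ord(T)$ and $\ord(T)\to\ord(S)$ induced on vertices by $f$ and $g$ — these are genuine simplicial maps because $f,g$ are monotonic and hence send chains to chains — so ``the induced maps of geometrical realizations'' in the statement are unambiguous.
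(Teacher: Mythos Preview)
Your proof is correct and shares the same logical skeleton as the paper's: both reduce to the pointwise inequalities $\id_S \leqslant g\circ f$ and $f\circ g \leqslant \id_T$ from Definition~\ref{defGaloisCon}(2) and deduce the homotopies from them. The difference lies in how the implication ``pointwise inequality $\Rightarrow$ homotopy'' is justified. You invoke the classical sublemma (a natural transformation induces a homotopy of nerves, \`a la Segal/Quillen), which is clean, general, and citable. The paper instead gives a hands-on construction specific to finite posets: it observes that $\clos = g\circ f$ is a closure operator on $S$ and builds the homotopy $\id_S \simeq \clos$ by moving each vertex $s\in S\setminus\clos(S)$ linearly to $\clos(s)$, processing vertices from top to bottom so that at each step the simplicial structure is respected. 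Your argument is tidier and does not rely on finiteness; the paper's is more elementary (no classifying-space machinery) and dovetails with the Stong-style beat-removal viewpoint used in Lemma~\ref{lemClosureUpbeatRemovals} and Lemma~\ref{lemCoreGaloisInsertion}.
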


\begin{proof}
One needs to prove the homotopy equivalences $g\circ f\simeq \id_S$ and $f\circ g\simeq\id_T$. Let us prove the first one, the second being completely similar. The composite map $\clos=g\circ f$ is a closure operator on $S$, so we need to construct a continuous homotopy $F\colon |S|\times[0,1]\to|S|$ connecting $\id_S$ to $\clos$. For finite posets the homotopy can be done by consecutively moving each vertex $s\in S\setminus\clos(S)$ to its closure $\clos(s)$. If we start with larger $s$ and proceed from top to bottom, then all elements of $S\setminus\clos(S)$ will be eventually collapsed to $\clos(S)$. The details are left as an exercise.
\end{proof}

Another block of definitions is motivated by the study of finite topologies. The definitions are usually formulated for finite topologies, but since the category of finite posets is equivalent to the category of finite topological spaces satisfying Kolmogorov T0 separation axiom, we give the definitions for posets, without digging into finite topologies. As before, $S$ denotes a finite poset.

\begin{defin}\label{defCore}
An element $s\in S$ is called an \emph{upbeat} if it is covered by exactly one element. An element $s\in S$ is called a \emph{downbeat} if it covers exactly one element. If the poset $S$ has neither downbeats nor upbeats, it is called a \emph{core}. If $T$ is obtained from $S$ by consecutive removals of upbeats and downbeats, and $T$ is a core, then $T$ is called a core of $S$.
\end{defin}

In this definition we used the terminology of May~\cite{May:2019}. In the original paper of Stong~\cite{Stong:1966} upbeats are called \emph{linear}, and downbeats are called \emph{colinear} elements. It follows from the results of Stong on finite topologies, that all cores of $S$ are isomorphic, so we can speak about \emph{the core} of $S$. We provide another useful lemma which we consider a folklore.

\begin{lem}\label{lemClosureUpbeatRemovals}
Let $\clos\colon S\to S$ be a closure operator, and $T=\clos(S)$ a subset of closed elements. Then $T$ is obtained from $S$ by consecutive upbeats' removals.
\end{lem}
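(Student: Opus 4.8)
The plan is to remove the non-closed elements of $S$ one at a time, from the top down, showing at each stage that the element being removed is an upbeat in the poset that remains. Fix a linear extension of the order on $S$ and enumerate the non-closed elements $s_1, s_2, \ldots, s_k$ so that $i < j$ whenever $s_i > s_j$ (i.e.\ larger elements come first). Set $S_0 = S$ and $S_m = S \setminus \{s_1, \ldots, s_m\}$. Note that every closed element is retained in every $S_m$, and that $S_k = T = \clos(S)$. The claim to prove by induction on $m$ is: $S_m$ is an induced subposet of $S$, $\clos$ restricts to a closure operator on $S_m$ with the same image $T$, and $s_{m+1}$ is an upbeat of $S_m$.

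The heart of the argument is the upbeat claim. Let $s = s_{m+1}$, which by construction is a maximal non-closed element among $\{s_{m+1}, \ldots, s_k\}$, hence a maximal non-closed element of $S_m$ (all elements of $S_m$ above $s$ are closed). Since $s$ is not closed, $s < \clos(s)$, and $\clos(s)$ is closed, so $\clos(s) \in S_m$. I claim $\clos(s)$ is the unique element of $S_m$ covering $s$. First, if $t \in S_m$ covers $s$ in $S_m$, then $t > s$; since $s$ is a maximal non-closed element of $S_m$, $t$ is closed, so $t = \clos(t) \geqslant \clos(s)$ by monotonicity of $\clos$ and $s \leqslant t$. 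On the other hand $\clos(s)$ itself lies strictly above $s$ in $S_m$, so there is an element of $S_m$ covering $s$ that is $\leqslant \clos(s)$; call it $t_0$. Then $\clos(s) \leqslant t_0 \leqslant \clos(s)$, so $t_0 = \clos(s)$, and moreover any $t$ covering $s$ satisfies $\clos(s) \leqslant t$ while also — being a cover — $t \leqslant \clos(s)$ would force $t = \clos(s)$; but two distinct covers $t, t'$ of $s$ both satisfy $\clos(s) \leqslant t$ and $\clos(s) \leqslant t'$ with $\clos(s)$ itself a cover, which is impossible unless $t = t' = \clos(s)$. Hence $s$ is covered by exactly one element of $S_m$, namely $\clos(s)$: it is an upbeat.

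It remains to check the bookkeeping needed to continue the induction: after deleting the upbeat $s = s_{m+1}$, the map $\clos$ still makes sense and behaves as a closure operator on $S_{m+1}$. For any $s' \in S_{m+1}$ we have $\clos(s') \in T \subseteq S_{m+1}$, so $\clos$ maps $S_{m+1}$ to itself; the three closure axioms (extensivity, monotonicity, idempotence) are inherited from $S$ since $S_{m+1}$ is an induced subposet and $\clos(S_{m+1}) = T$ (every element of $T$ is fixed by $\clos$ and lies in $S_{m+1}$, and conversely $\clos$ lands in $T$). Thus the induction proceeds, and after $k$ steps we reach $S_k = T$, having removed only upbeats. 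I expect the main obstacle to be purely expository: making the ordering/enumeration of non-closed elements precise and arguing cleanly that ``maximal non-closed in $S_m$'' persists — this is where a sloppy argument could hide a gap, so the induction hypothesis must explicitly carry the statement that all elements of $S_m$ lying strictly above any not-yet-removed non-closed element are already closed.
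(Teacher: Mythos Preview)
Your proposal is correct and follows essentially the same approach as the paper's proof: remove maximal non-closed elements one at a time, observing that any cover of such an element must be closed and hence $\geqslant \clos(s)$, which forces the cover to equal $\clos(s)$. The paper states this in three sentences without the explicit induction bookkeeping, while you spell out the intermediate posets $S_m$ and verify that the closure operator restricts; your middle paragraph is more convoluted than necessary (once you know every cover $t$ satisfies $t\geqslant\clos(s)>s$, the covering relation immediately forces $t=\clos(s)$), but the argument is sound.
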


\begin{proof}
Consider the subposet $R=S\setminus\clos(S)$. Any maximal element $s$ of $R$ is an upbeat in $S$, since it is covered only by $\clos(s)$. Indeed, we have $\clos(s)>s$, and if there exists another closed element $s'$ with $s'>s$, then $s'=\clos(s')\geqslant\clos(s)$, hence $s'$ does not cover $s$. Remove $s$ from $R$. Repeat the argument until $R$ is exhausted.
\end{proof}

\begin{lem}\label{lemCoreGaloisInsertion}
If $\iota\colon T\to S$ is a Galois insertion, and $T$ is a core, then $T$ is the core of~$S$.
\end{lem}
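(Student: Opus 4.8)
The plan is to combine Lemma~\ref{lemClosureUpbeatRemovals} with the uniqueness of the core. Given a Galois insertion $\iota\colon T\to S$ with associated projection $\pi\colon S\rightleftarrows T\colon\iota$, Construction~\ref{conClosureOperator} tells us that $\clos=\iota\circ\pi$ is a closure operator on $S$ and that $T=\clos(S)$ is precisely its set of closed elements. By Lemma~\ref{lemClosureUpbeatRemovals}, $T$ is obtained from $S$ by a sequence of upbeat removals. In particular, $T$ is obtained from $S$ by consecutive removals of upbeats and downbeats (using none of the latter).

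Now invoke the hypothesis that $T$ is itself a core, i.e.\ $T$ has neither upbeats nor downbeats. Then the sequence of removals exhibiting $T$ as a quotient of $S$ terminates at a core, so by Definition~\ref{defCore}, $T$ is \emph{a} core of $S$. Finally, by Stong's theorem (quoted in the text right after Definition~\ref{defCore}) all cores of a finite poset are isomorphic, so $T$ is \emph{the} core of $S$.

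I do not anticipate a serious obstacle here; the statement is essentially a formal bookkeeping consequence of the earlier lemmas. The one point that deserves a line of care is making sure the reduction process from $S$ to $T$ is legitimate as a "core computation'': Lemma~\ref{lemClosureUpbeatRemovals} guarantees $T$ is reached by upbeat removals, but one should note that once $T$ is reached, since $T$ is assumed a core, no further removals are possible, so $T$ genuinely is the terminal object of some admissible reduction sequence. After that, uniqueness of the core (Stong) closes the argument, and the identification $T=\clos(S)$ from Construction~\ref{conClosureOperator} is what lets us apply Lemma~\ref{lemClosureUpbeatRemovals} in the first place.
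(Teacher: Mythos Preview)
Your proof is correct and follows exactly the same route as the paper, which simply states that the lemma ``follows directly from the definition of the core and the previous lemma.'' You have just spelled out the details: Construction~\ref{conClosureOperator} gives the closure operator, Lemma~\ref{lemClosureUpbeatRemovals} gives the upbeat removals down to $T$, the hypothesis that $T$ is a core makes it \emph{a} core of $S$, and Stong's uniqueness makes it \emph{the} core.
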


This follows directly from the definition of the core and the previous lemma.

There is an example of Galois connection relevant to our study.

\begin{ex}\label{exGaloisInsertGeomLattice}
Let $\beta=\{\beta_1,\ldots,\beta_m\}$ be a linear matroid. Let $2^{\beta}$ denote the Boolean lattice of subsets of $\beta$, and $\Flats(\beta)$ be the lattice of flats, as before. Then we have a Galois insertion
\[
\iota\colon \Flats(\beta) \hookrightarrow 2^{\beta},\quad \pi\colon 2^{\beta}\to \Flats(\beta).
\]
The map $\iota$ outputs the set of all vectors lying in a given flat. The map $\pi$ outputs the span of $W$, that is the intersection of $\beta$ with the subspace $\langle W\rangle$.
\end{ex}

\begin{rem}\label{remBeatsInLattice}
Any geometric lattice has beats: for example, all atoms are downbeats, and all coatoms (elements covered by $\hat{1}$) are upbeats. However, the following statement holds.
\end{rem}

\begin{lem}\label{lemGeomLatticeIsCore}
If $S$ is a geometric lattice, then $S\setminus\{\hat{0},\hat{1}\}$ is a core.
\end{lem}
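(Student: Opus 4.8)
The plan is to show that $S\setminus\{\hat0,\hat1\}$ has neither upbeats nor downbeats. By the symmetry of the two conditions, and since the defining axioms of a geometric lattice are not self-dual (property (1) is about atoms, but property (2) is a self-dual submodular inequality), I would handle upbeats and downbeats by separate but parallel arguments, each exploiting the submodular rank inequality $\rk s+\rk t\geqslant\rk(s\vee t)+\rk(s\wedge t)$ together with the semimodularity it implies (if $s$ covers $s\wedge t$ then $s\vee t$ covers $t$).

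First I would rule out downbeats in $S\setminus\{\hat0,\hat1\}$. Suppose $s\in S\setminus\{\hat0,\hat1\}$ covers exactly one element $t$ of $S\setminus\{\hat0,\hat1\}$; since $s\neq\hat0$ it has rank $\geqslant 1$, and I claim $s$ in fact covers at least two elements of the big lattice $S$, forcing at least one such element to be distinct from $t$ and lie in $S\setminus\{\hat0,\hat1\}$ unless $s$ is an atom. If $s$ is an atom then $s$ covers only $\hat0$, so $s$ is not a downbeat of $S\setminus\{\hat0,\hat1\}$ at all (it covers nothing there); hence assume $\rk s\geqslant 2$. By property (1), $s=\bigvee A$ for a set $A$ of atoms, and $|A|\geqslant 2$ since $\rk s\geqslant 2$ (an empty or singleton join has rank $\leqslant 1$). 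Pick two atoms $a_1,a_2\leqslant s$. Let $s_i$ be an element covered by $s$ with $s_i\geqslant \bigvee(A\setminus\{a_i\})$ obtained by taking a maximal chain from $\bigvee(A\setminus\{a_i\})$ up to $s$; then $s_1\neq s_2$ because $a_1\not\leqslant s_2$ while $a_1\leqslant s_1$ — more carefully, $s_1\vee a_1=s$ and $s_1\neq s$ give $a_1\not\leqslant s_1$, and symmetrically $a_2\not\leqslant s_2$; if $s_1=s_2$ this single element would fail to contain one of $a_1,a_2$ while containing the other, which is consistent, so I instead use: $s=s_1\vee s_2$ forces $s_1\neq s_2$ (a join with itself is itself), and $s_1\vee s_2=s$ holds since $s_1\vee s_2\geqslant \bigvee A=s$. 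Now $s_1,s_2$ both have rank $\rk s-1\geqslant 1$, so both are $\neq\hat0$, and both are $<s\leqslant$ something $<\hat1$ only if $s<\hat1$ — which holds since $s\neq\hat1$ — wait, I need $s_i\neq\hat1$, and indeed $s_i<s\neq\hat1$ would need $s\leqslant\hat1$ strictly; $s_i<s$ and $s\neq\hat1$ do not immediately give $s_i\neq\hat1$, but $s_i<s$ implies $s_i\neq s$, and combined with $\rk s_i<\rk s\leqslant\rk\hat1$ we get $s_i\neq\hat1$. So $s$ covers two distinct elements of $S\setminus\{\hat0,\hat1\}$, contradiction.

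Next I would rule out upbeats, which is the part I expect to be the main obstacle, since property (1) does not dualize. Suppose $s\in S\setminus\{\hat0,\hat1\}$ is covered by exactly one element of $S\setminus\{\hat0,\hat1\}$. I will instead show directly that $s$ is covered by at least two elements of $S$, at least two of which are $\neq\hat1$ unless $s$ is a coatom — and if $s$ is a coatom then in $S\setminus\{\hat0,\hat1\}$ it is covered by nothing, so it is not an upbeat there. So assume $s$ is not a coatom, i.e. $\hat1$ does not cover $s$, and $\rk s\leqslant n-2$ where $n=\rk S$. Pick any atom $a\not\leqslant s$ (exists since $s\neq\hat1$ and $\hat1$ is the join of all atoms). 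Then $s\vee a$ covers $s$ by semimodularity: $a$ covers $\hat0=a\wedge s$, hence $s\vee a$ covers $s\vee\hat0=s$. If $s$ were covered by only this one element $s\vee a$ of $S\setminus\{\hat0,\hat1\}$, I would derive a contradiction by finding a second atom. Since $\rk s\leqslant n-2=\rk\hat1-2$, the element $s\vee a$ still satisfies $\rk(s\vee a)=\rk s+1\leqslant n-1<n$, so $s\vee a\neq\hat1$ and there is an atom $b\not\leqslant s\vee a$; then $b\not\leqslant s$ either, and $s\vee b$ covers $s$, with $s\vee b\neq s\vee a$ since $b\leqslant s\vee b$ but $b\not\leqslant s\vee a$. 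Both $s\vee a$ and $s\vee b$ have rank $\rk s+1\leqslant n-1$, so neither equals $\hat1$, and both are $>s>\hat0$ so neither equals $\hat0$. This gives two distinct covers of $s$ in $S\setminus\{\hat0,\hat1\}$, contradicting the upbeat assumption. This completes the proof; the only subtlety, which I have flagged above, is the careful bookkeeping to ensure the auxiliary elements genuinely avoid $\hat0$ and $\hat1$, which is where the hypotheses "$s$ is not an atom" and "$s$ is not a coatom" enter, and these are exactly the cases where $s$ trivially fails to be a downbeat resp. upbeat of the truncated poset.
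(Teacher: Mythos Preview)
Your argument is essentially correct and, as you explicitly note, treats upbeats and downbeats by separate arguments rather than by duality. The paper instead dispatches upbeats in one line by asserting that the order-dual of a geometric lattice is again geometric, and then proves only the downbeat case. That duality claim is actually false in general: the dual of an upper-semimodular lattice is lower-semimodular, and a finite lattice enjoying both is modular, so any non-modular geometric lattice (e.g.\ the partition lattice $\Pi_4$) has non-geometric dual. Your direct semimodularity proof for upbeats --- pick an atom $a\not\leqslant s$, use that $a$ covers $a\wedge s=\hat0$ so $s\vee a$ covers $s$, then find a second atom $b\not\leqslant s\vee a$ since $\rk(s\vee a)<\rk\hat1$ --- is therefore the honest route and is exactly what one needs to repair the paper's shortcut.

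For downbeats the paper's argument is cleaner than yours: once one observes that a downbeat $s$ of $S\setminus\{\hat0,\hat1\}$ must have $\rk s\geqslant 2$ and hence covers its unique successor $s'$ also in all of $S$, every atom $x\leqslant s$ satisfies $x\leqslant s'$ (any saturated chain from $x$ up to $s$ passes through $s'$), forcing $s=\bigvee x\leqslant s'$, a contradiction. Your own downbeat argument has a small gap: the element $s_i$ covered by $s$ with $s_i\geqslant\bigvee(A\setminus\{a_i\})$ exists only if $\bigvee(A\setminus\{a_i\})<s$, which can fail for an arbitrary atom-generating set $A$; you should stipulate that $A$ is chosen \emph{minimal} (a basis of the flat $s$), after which your argument goes through.
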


\begin{proof}
Since reversing the order in a geometric lattice produces another geometric lattice, it is sufficient to prove that there are no downbeats in $S\setminus\{\hat{0},\hat{1}\}$. Assuming the contrary, there is an element $s$ which covers the unique element $s'$. By the definition of the lattice, $s$ is a join (a supremum) of some nonempty set of atoms $x_1,\ldots,x_l$. Then we also have $s'\geqslant x_1,\ldots,x_l$, which contradicts to the definition of supremum.
\end{proof}

\subsection{Posets related to a graph}\label{subsecPosetsFromGraph}

Now we switch attention to the specific posets: cluster-permutohedra and graphicahedra. As before, let $\Gamma$ be a graph (assumed connected for simplicity) on the vertex set $V_\Gamma=[n]=\{1,2,\ldots,n\}$ with an edge set $E_\Gamma$.

\begin{con}\label{conClusterPerm}
An unordered subdivision $\ca{C}=\{V_1,\ldots,V_k\}$ of the set $V_\Gamma$ will be called a \emph{clustering} if each induced subgraph $\Gamma_{V_i}$ is connected. The set $\ca{L}_\Gamma$ of all clusterings is partially ordered by refinement: $\ca{C}'\leq \ca{C}$ if each $V_i'\in \ca{C}'$ is a subset of some $V_j\in \ca{C}$. The poset $\ca{L}_\Gamma$ has the least element $\hat{0}=\{\{1\},\ldots,\{n\}\}$ and the greatest element $\hat{1}=\{V_\Gamma\}$. The poset $\ca{L}_\Gamma$ is graded by $\rk\ca{C}=n-|\ca{C}|$. It can be seen that $\ca{L}_\Gamma$ is a geometric lattice, more precisely it is exactly the lattice of flats $\Flats(\Gamma)$ of the graphical matroid corresponding to $\Gamma$, see Construction~\ref{conGeomLatticeOfGraph}.

Let us consider all possible bijections $p$ from $V_\Gamma$ to $[n]=\{1,\ldots,n\}$. We say that two bijections $p_1,p_2\colon V\to[n]$ are equivalent with respect to a clustering $\ca{C}$ (or $\ca{C}$-\emph{equivalent}), denoted $p_1\stackrel{\ca{C}}{\sim}p_2$, if $p_1,p_2$ differ by permutations within clusters $V_i$ of $\ca{C}$. In other words,
\[
p_1^{-1}\circ p_2 \in \Sigma_{\ca{C}}=\Sigma_{V_1}\times\cdots\times\Sigma_{V_k}\subseteq\Sigma_V.
\]
A class of $\ca{C}$-equivalent bijections will be called \emph{an assignment} for the clustering $\ca{C}$. Thus far, all assignments for $\ca{C}$ are naturally identified with the left cosets $\Sigma_{\ca{C}}\backslash \Sigma_V$ (at this point it is nonessential, whether we consider left or right cosets, however the choice of side will be important in the following).

Let $\Cl_\Gamma$ denote the set of all possible pairs $(\ca{C},\ca{A})$ where $\ca{C}\in \ca{L}_\Gamma$ is a clustering, and $\ca{A}\in \Sigma_{\ca{C}}\backslash\Sigma_V$ is an assignment for this clustering. We introduce a partial order on the set $\Cl_\Gamma$ as follows. Notice that any relation $\ca{C}'\leq \ca{C}$ induces the inclusion of subgroups $\Sigma_{\ca{C}'}\hookrightarrow \Sigma_{\ca{C}}$ hence the natural surjection of left cosets
\[
\pr_{\ca{C}'\leq \ca{C}}\colon \Sigma_{\ca{C}'}\backslash\Sigma_V\to \Sigma_{\ca{C}}\backslash\Sigma_V.
\]
The partial order on $\Cl_\Gamma$ is defined by setting $(\ca{C}',\ca{A}')\leq (\ca{C},\ca{A})$ if and only if $\ca{C}'\leq \ca{C}$ and $\pr_{\ca{C}'\leq \ca{C}}(\ca{A}')=\ca{A}$. The poset $\Cl_\Gamma$ is graded by $\rk((\ca{C},\ca{A}))=\rk\ca{C}$.
\end{con}

\begin{defin}\label{defClusterPerm}
The poset $\Cl_\Gamma$ is called \emph{the cluster-permutohedron} of a graph $\Gamma$.
\end{defin}

\begin{con}\label{conGraphicahedron}
The construction of a graphicahedron is very similar to the previous construction, however instead of the geometric lattice $\ca{L}_\Gamma$, we use the Boolean lattice $2^{E_\Gamma}$. Let $W\subseteq E_\Gamma$ be a subset of edges of $\Gamma$. Vertex sets of connected components of the subgraph $([n],W)$ determine a clustering which we denote by $\ca{C}(W)$.

Let $\Gr_\Gamma$ denote the set of all possible pairs $(W,\ca{A})$ where $W\in 2^{E_\Gamma}$ is a set of edges, and $\ca{A}\in \Sigma_V/\Sigma_{\ca{C}(W)}$ is an assignment for the corresponding clustering $\ca{C}(W)$. The partial order on $\Gr_\Gamma$ is defined by setting $(W',\ca{A}')\leq (W,\ca{A})$ if and only if $W'\subseteq W$ and $\pr_{\ca{C}(W)'\leq \ca{C}(W)}(\ca{A}')=\ca{A}$ similar to the previous construction.
\end{con}

\begin{defin}\label{defGraphicahedron}
The poset $\Gr_\Gamma$ is called \emph{the graphicahedron} of a graph $\Gamma$.
\end{defin}

The poset $\Gr_\Gamma$ is graded by $\rk((\ca{C}(W),\ca{A}))=|W|$. However, it will be more convenient for us to forget this natural grading, and use another numerical characteristic of the elements of $\Gr_\Gamma$ that we call \emph{connectivity}. By definition,
\[
\conn((W,\ca{A}))=\rk\ca{C}(W)=n-\mbox{number of connected components of }([n],W).
\]

\begin{con}\label{conSkeleta}
By the skeleta of posets we mean the following subposets
\begin{equation}\label{eqSkeleta}
(\Cl_\Gamma)_j=\{s\in \Cl_\Gamma\mid \rk s\leqslant j\},\quad (\Gr_\Gamma)_j=\{s\in \Gr_\Gamma\mid \conn(s)\leqslant j\}.
\end{equation}
Notice that in the case of the graphicahedron we used connectivity instead of rank to define the skeleton. This definition is different from the one given in~\cite{Araujo-Pardo_Rio-Francos_Lopez-Dudet_Oliveros_Schulte:2010}. However, for the low rank elements of $\Gr_\Gamma$ there is no difference as the following lemma states.
\end{con}

\begin{lem}\label{lemConRkSimple}
If $s\in \Gr_\Gamma$ and $\rk s\leqslant 2$, then $\conn(s)=\rk s$. On the other hand, if $\rk s\geqslant 2$, then $\conn(s)\geqslant 2$.
\end{lem}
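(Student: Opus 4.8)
The plan is to reduce everything to the rank function of the graphical matroid. For a subset $W\subseteq E_\Gamma$, if the subgraph $([n],W)$ has $c$ connected components, then any spanning forest of it has exactly $n-c$ edges (one fewer edge than vertices in each component), and this number is precisely the rank $\rk_\Gamma(W)$ of $W$ in the graphical matroid $\beta(\Gamma)$. By the very definition of connectivity, $\conn((W,\ca{A}))=n-c=\rk_\Gamma(W)$. Since $\rk s=|W|$ and $\rk_\Gamma(W)\leqslant|W|$ always, the lemma amounts to two claims about matroid rank: that $\rk_\Gamma(W)=|W|$ when $|W|\leqslant 2$, and that $\rk_\Gamma(W)\geqslant 2$ when $|W|\geqslant 2$.

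The one elementary input I would invoke is that a \emph{simple} graph has no loops and no multiple edges, so its girth is at least $3$; hence no set of at most two edges contains a cycle, i.e.\ every such set is independent in $\beta(\Gamma)$. Spelled out: a single edge is never a loop, and two distinct edges of a simple graph — whether they share a vertex (a path of length $2$) or are disjoint — never close up a cycle. This gives the first assertion directly: if $\rk s=|W|\leqslant 2$, then $W$ is an independent set, so $\rk_\Gamma(W)=|W|$, that is $\conn(s)=\rk s$ (the cases $|W|=0,1,2$ being covered uniformly).

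For the second assertion, suppose $\rk s=|W|\geqslant 2$ and choose any two-element subset $W_0\subseteq W$. By the previous paragraph $W_0$ is independent, so $\rk_\Gamma(W_0)=2$; by monotonicity of the matroid rank function, $\conn(s)=\rk_\Gamma(W)\geqslant\rk_\Gamma(W_0)=2$, as required.

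I do not expect a genuine obstacle here: the whole content is the identification $\conn(s)=\rk_\Gamma(W)$ (a standard forest-counting bookkeeping) together with the girth-$\geqslant 3$ observation for simple graphs. The only point that deserves care is making the case analysis for $|W|\in\{0,1,2\}$ fully explicit, so that no tacit assumption about multigraphs slips in.
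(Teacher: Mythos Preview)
Your proof is correct and amounts to the same observation as the paper's: in a simple graph any set of at most two edges is acyclic, so adding $j\leqslant 2$ edges to the discrete vertex set reduces the number of components by exactly $j$. You phrase this via the identity $\conn(s)=\rk_\Gamma(W)$ and matroid independence/monotonicity, while the paper states the combinatorial fact directly in one line; your version is more explicit (in particular you spell out the second assertion via monotonicity, which the paper leaves implicit), but the underlying idea is identical.
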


\begin{proof}
If $j=0,1$, or $2$ many edges are added to the disjoint vertex set $[n]$, then the number of connected components is reduced exactly by $j$.
\end{proof}

In particular, Lemma~\ref{lemConRkSimple} implies that $(\Gr_\Gamma)_1=\{s\in \Gr_\Gamma\mid \rk(s)\leqslant 1\}$. There is no difference between $\conn$ and $\rk$ in the definition of 1-skeleton. Let us recall the main result of~\cite{Araujo-Pardo_Rio-Francos_Lopez-Dudet_Oliveros_Schulte:2010}.

\begin{prop}\label{propGrAndCayley}
The 1-skeleton $(\Gr_\Gamma)_1$ is isomorphic to the Cayley graph $\Cayley_\Gamma$ of $\Sigma_n$ with the set of generators $\{(i, j)\mid \{i,j\}\in E_\Gamma\}$.
\end{prop}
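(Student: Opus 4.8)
The plan is to exhibit an explicit bijection between the vertex set of $(\Gr_\Gamma)_1$ and $\Sigma_n$, and then to check that it carries the covering relations of the poset onto the edges of $\Cayley_\Gamma$. First I would identify the rank-$0$ elements of $\Gr_\Gamma$: these are the pairs $(\varnothing,\ca{A})$ where $W=\varnothing$ forces $\ca{C}(W)=\hat{0}=\{\{1\},\ldots,\{n\}\}$, so $\Sigma_{\ca{C}(W)}$ is trivial and the assignment $\ca{A}$ is simply an element of $\Sigma_V/\{e\}=\Sigma_V$, i.e. a bijection $V_\Gamma\to[n]$. Fixing the canonical identification $V_\Gamma=[n]$ once and for all, each rank-$0$ element is thus a permutation $\sigma\in\Sigma_n$; this is the vertex set of the Cayley graph.

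Next I would analyze the rank-$1$ elements. By Lemma~\ref{lemConRkSimple}, for $s$ of rank $1$ we have $\conn(s)=1$, meaning $W$ consists of a single edge, say $W=\{\{i,j\}\}$ with $\{i,j\}\in E_\Gamma$ (the rank of $\Gr_\Gamma$ is $|W|$, and adding one edge drops the number of components by exactly one, so distinct single-edge sets give distinct rank-$1$ elements, and no larger $W$ has rank $1$). For such $W$, the clustering $\ca{C}(W)$ has one two-element block $\{i,j\}$ and singletons elsewhere, so $\Sigma_{\ca{C}(W)}=\Sigma_{\{i,j\}}\cong\Zt$, generated by the transposition $(i,j)$, and the assignments are the cosets $\Sigma_V/\langle(i,j)\rangle$. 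A rank-$1$ element of $\Gr_\Gamma$ is therefore a pair consisting of an edge $\{i,j\}\in E_\Gamma$ and a coset $\sigma\langle(i,j)\rangle$.

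Now I would read off the covering relations. A rank-$0$ element $\sigma$ lies below the rank-$1$ element $(\{i,j\},\tau\langle(i,j)\rangle)$ precisely when $\varnothing\subseteq\{i,j\}$ (automatic) and $\pr_{\hat{0}\le\ca{C}(W)}(\sigma)=\tau\langle(i,j)\rangle$, i.e. when $\sigma\in\tau\langle(i,j)\rangle$. Hence each rank-$1$ element covers exactly the two rank-$0$ elements $\sigma$ and $\sigma(i,j)$ — these are the two ends of an edge — and conversely a pair $\sigma,\sigma'$ of rank-$0$ elements has a common rank-$1$ element above it if and only if $\sigma'=\sigma(i,j)$ for some $\{i,j\}\in E_\Gamma$, in which case that rank-$1$ element, namely $(\{i,j\},\sigma\langle(i,j)\rangle)$, is unique. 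This is exactly the adjacency rule of the Cayley graph $\Cayley_\Gamma$ with generating set of transpositions $\{(i,j)\mid\{i,j\}\in E_\Gamma\}$: vertices $\sigma,\sigma'$ are joined by an edge iff $\sigma^{-1}\sigma'$ is one of the generators. Therefore the assignment $\sigma\mapsto\sigma$ on vertices together with $(\{i,j\},\sigma\langle(i,j)\rangle)\mapsto\{\sigma,\sigma(i,j)\}$ on edges is a graph isomorphism $(\Gr_\Gamma)_1\xrightarrow{\ \sim\ }\Cayley_\Gamma$.

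The only genuinely delicate points are bookkeeping ones rather than conceptual obstacles: one must be consistent about the choice of left versus right cosets (the construction of $\Gr_\Gamma$ uses $\Sigma_V/\Sigma_{\ca{C}(W)}$, so the generators act by right multiplication and $\sigma,\sigma'$ share a cover iff $\sigma^{-1}\sigma'\in E_\Gamma$ viewed as a transposition), and one must verify that the described vertex-to-vertex and edge-to-edge maps are well defined independently of the coset representative $\sigma$ chosen (immediate, since $\sigma\langle(i,j)\rangle=\sigma(i,j)\langle(i,j)\rangle$ and $\{\sigma,\sigma(i,j)\}$ is a symmetric expression). Everything else reduces to Lemma~\ref{lemConRkSimple} and the definition of the order on $\Gr_\Gamma$.
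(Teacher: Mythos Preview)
Your argument is correct and complete: you correctly identify the rank-$0$ elements with permutations, the rank-$1$ elements with pairs (edge of $\Gamma$, coset modulo the corresponding transposition), and verify that each rank-$1$ element covers exactly the two permutations differing by that transposition, which is precisely the adjacency in $\Cayley_\Gamma$. There is no proof in the paper to compare against --- the proposition is merely quoted there as ``the main result of~\cite{Araujo-Pardo_Rio-Francos_Lopez-Dudet_Oliveros_Schulte:2010}'' without argument --- so your write-up in fact supplies what the paper omits.
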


\subsection{Toric topology}\label{subsecToric}

Let us recall several basic definitions from toric topology. For details we refer to~\cite{Buchstaber_Panov:2015}. Let $R$ denote a coefficient ring (either $\Zo$ or a field).

Let a compact torus $T$, $\dim T=k$, act on a smooth closed orientable connected manifold $X$ of real dimension $2m$. We will assume that the set $X^T$ of fixed points of the action is nonempty and finite. In this case it is said that the action has isolated fixed points.

The abelian group $N=\Hom(T,S^1)\cong \Zo^k$ is called the \emph{lattice of weights}\footnote{The term ``lattice'' appears in two different yet standard meanings in our paper}.

\begin{con}\label{conTangentWeights}
If $x\in X^T$ is an isolated fixed point, there is an induced representation of $T$ in the tangent space $T_xX$ called the tangent representation. Let $\alpha_{x,1},\ldots,\alpha_{x,m}\in \Hom(T,S^1)\cong \Zo^{k}$ be the weights of the tangent representation at $x$, which means that
\[
T_xX\cong V(\alpha_{x,1})\oplus\cdots\oplus V(\alpha_{x,m}),
\]
where $V(\alpha)$ is the standard irreducible 1-dimensional complex (or 2-dimensional real) representation given by $tz=\alpha(t)\cdot z$, $z\in \Co$. It is assumed that all weight vectors $\alpha_{x,i}$ are nonzero since otherwise $x$ is not isolated. Since there is no $T$-invariant complex structure on $X$, the vectors $\alpha_{x,i}$ are only determined up to sign. The choice of sign is nonessential for the arguments of this paper. It should be stated as well that the weight vectors are considered as the elements of rational space $N\otimes\Qo\cong\Qo^k$ if necessary.
\end{con}

\begin{defin}
If a torus $T$, $\dim T=k$, acts effectively on a manifold $X$, $\dim_\Ro X=2m$, and has isolated fixed points, the number $m-k$ is called \emph{the complexity of the action}.
\end{defin}

The slice theorem together with Construction~\ref{conTangentWeights} implies that complexity is always nonnegative.

\begin{defin}\label{defJindependent}
The $T$-action on $X$ is called \emph{$j$-independent} if, for any fixed point $x\in X^T$, any subset of $\{\alpha_{x,1},\ldots,\alpha_{x,m}\}$ of cardinality $\leqslant j$, is linearly independent (over $\Qo$).
\end{defin}

\begin{con}\label{conFacePoset}
Let us define the notion of a face of a toric action. The missing details and references can be found in~\cite{Ayzenberg_Cherepanov:2022}. For any closed connected subgroup $H\subset T$, consider the subset $X^H$ fixed by $H$. This is a closed smooth submanifold of $X$. Connected components of $X^H$ are called \emph{invariant submanifolds}. An invariant submanifold $Y$ is called a \emph{face submanifold} if it contains a $T$-fixed point. Each face submanifold is $T$-stable. Its orbit space by the $T$-action is called a \emph{face}. We denote a face by a letter $F$, while the corresponding face submanifold is denoted $X_F$, so that we have $X_F/T=F$. Let $T_F$ denote the noneffective kernel of the $T$-action on $X_F$. The dimension $\dim T/T_F$ is called the \emph{rank of $F$} (or the rank of $X_F$) and is denoted by $\rk F$.

All face submanifolds (or all faces) are ordered by inclusion. They form a finite poset graded with the rank-function. We denote this poset by $S(X)$. The poset has the greatest element $\hat{1}$ --- the manifold $X$ itself. The minimal elements are the fixed points of the action, they all have rank 0.
\end{con}

\begin{prop}[{\cite[Thm.1]{Ayzenberg_Cherepanov:2022}}]\label{propAyzCherep}
For any action with isolated fixed points, the poset $S(X)$ is locally geometric. For any fixed point $x\in X^T$, the upper order ideal $S(X)_{\geqslant x}$ is isomorphic to the lattice of flats $\Flats(\{\alpha_{x}\})$ of the linear matroid of tangent weights $\alpha_x=\{\alpha_{x,1},\ldots,\alpha_{x,m}\}\subset \Qo^k$ at $x$.
\end{prop}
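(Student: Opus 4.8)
\emph{Approach.} Fix a fixed point $x\in X^T$ and, following Construction~\ref{conTangentWeights}, decompose $T_xX\cong\bigoplus_{i=1}^m V(\alpha_{x,i})$. I plan to construct an explicit order isomorphism $\phi\colon S(X)_{\geqslant x}\to\Flats(\{\alpha_x\})$ and then deduce the locally geometric assertion from it. Recall the standard fact that for a compact torus $T$ acting smoothly and a closed connected subgroup $H\subseteq T$, the fixed set $X^H$ is a smooth submanifold with $T_y(X^H)=(T_yX)^H$ at every $y\in X^H$. Since $x$ is $T$-fixed, $x\in X^H$, and the connected component $Y$ of $X^H$ through $x$ is $T$-stable (as $T$, being abelian, centralizes $H$) and contains the fixed point $x$; hence $Y$ is a face submanifold. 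In particular $T_xY=\bigoplus\{V(\alpha_{x,i})\mid \alpha_{x,i}|_H=0\}$ is a sum of entire weight subspaces, and I set $\phi(Y)=\{\alpha_{x,i}\mid V(\alpha_{x,i})\subseteq T_xY\}\subseteq\{\alpha_x\}$. Every face submanifold containing $x$ arises in this way.

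\emph{The key step (rigidity).} The heart of the proof is that a face submanifold $Y\ni x$ is recovered from the rational subspace $\langle\phi(Y)\rangle\subseteq N\otimes\Qo$. Let $H_Y\subseteq T$ be the closed connected subtorus with $\mathrm{Lie}(H_Y)=\langle\phi(Y)\rangle^{\perp}\subseteq\ttt$ (well defined because $\phi(Y)$ consists of lattice vectors, so $\langle\phi(Y)\rangle$ is rational). Then $H_Y$ acts trivially on $T_xY$; since $Y$ is connected and $Y^{H_Y}$ is a closed submanifold with $T_x(Y^{H_Y})=(T_xY)^{H_Y}=T_xY$, one gets $Y^{H_Y}=Y$, i.e. $Y\subseteq X^{H_Y}$. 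Conversely, if $Y$ is the component through $x$ of $X^H$ for some $H$, then every weight occurring in $T_xY$ vanishes on $H$, which forces $\mathrm{Lie}(H)\subseteq\langle\phi(Y)\rangle^{\perp}$, hence $H\subseteq H_Y$ and $X^{H_Y}\subseteq X^H$; therefore $Y$ is exactly the component of $X^{H_Y}$ through $x$. This yields injectivity of $\phi$ and, along the way, that $T_xY=\bigoplus\{V(\alpha_{x,i})\mid \alpha_{x,i}\in\langle\phi(Y)\rangle\}$, so $\phi(Y)=\{\alpha_x\}\cap\langle\phi(Y)\rangle$ is a flat. For surjectivity, a flat $F$ satisfies $\{\alpha_x\}\cap\langle F\rangle=F$, and the component through $x$ of $X^{H}$ with $\mathrm{Lie}(H)=\langle F\rangle^{\perp}$ is a face submanifold $Y$ with $\phi(Y)=F$. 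Monotonicity in both directions follows from the equivalences $Y_1\subseteq Y_2\iff T_xY_1\subseteq T_xY_2\iff\phi(Y_1)\subseteq\phi(Y_2)$, where the only nonobvious implication, $T_xY_1\subseteq T_xY_2\Rightarrow Y_1\subseteq Y_2$, again uses $H_{Y_2}\subseteq H_{Y_1}$ and that $Y_i$ is the component of $X^{H_{Y_i}}$ through $x$. Finally, the common kernel of the weights occurring in $T_xY$ has Lie algebra $\langle\phi(Y)\rangle^{\perp}$ and contains $T_Y$, while $H_Y\subseteq T_Y$; hence $\mathrm{Lie}(T_Y)=\langle\phi(Y)\rangle^{\perp}$ and $\rk Y=\dim T/T_Y=\dim\langle\phi(Y)\rangle=\rk\phi(Y)$. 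Thus $\phi$ is a graded order isomorphism, which is the second assertion.

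\emph{Local geometricity.} The poset $S(X)$ has greatest element $X$. For an arbitrary face $s\in S(X)$, choose a fixed point $x\in X_s$ (it exists by the very definition of a face submanifold); then $x\leqslant s$, so $S(X)_{\geqslant s}\subseteq S(X)_{\geqslant x}$, and under the isomorphism $\phi$ just built $S(X)_{\geqslant s}$ is carried onto the principal filter $[\phi(X_s),\hat1]$ of $\Flats(\{\alpha_x\})$. An upper interval $[a,\hat1]$ of a geometric lattice is itself a geometric lattice (it is the lattice of flats of the contraction matroid $\{\alpha_x\}/a$), so $S(X)_{\geqslant s}$ is a geometric lattice, and $S(X)$ is locally geometric in the sense of Definition~\ref{defLocGeomPoset}.

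\emph{Main obstacle.} I expect the delicate point to be the rigidity statement, namely passing from equality of the tangent $T$-representations at the single fixed point $x$ to equality of the face submanifolds themselves. This requires the fixed-submanifold/slice theorem for $X^H$ together with a connectedness-propagation argument, and careful bookkeeping of integrality (closed subtori versus rational subspaces of $\ttt$) as well as of the gap between the noneffective kernel $T_Y$ and its identity component.
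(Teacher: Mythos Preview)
The paper does not supply its own proof of this proposition; it is quoted verbatim as \cite[Thm.~1]{Ayzenberg_Cherepanov:2022} and used as a black box. Your argument is correct and is essentially the standard one that appears in that reference: identify each face submanifold $Y\ni x$ with its tangent $T$-representation at $x$, recover $Y$ as the component through $x$ of $X^{H_Y}$ for the subtorus $H_Y$ with $\mathrm{Lie}(H_Y)=\langle\phi(Y)\rangle^\perp$, and use the fixed-point-set theorem plus connectedness to close the loop between tangent data and submanifolds. The deduction of local geometricity from the fact that upper intervals in a geometric lattice are again geometric lattices (flats of a contraction) is likewise standard. One small remark: in the rigidity step you tacitly use that the closed connected subgroup $H$ in the definition of a face submanifold is a subtorus, so that $\mathrm{Lie}(H)\subseteq\mathrm{Lie}(H_Y)$ indeed forces $H\subseteq H_Y$; this is true but worth stating explicitly, since your ``main obstacle'' paragraph flags disconnectedness issues that in fact do not arise here.
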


\begin{con}\label{conEquivFormality}
Consider the classifying principal $T$-bundle $ET\to BT\simeq (\CP^\infty)^k$, and let $X_T=X\times_TET$ denote the Borel construction of $X$. There is a Serre fibration $p\colon X_T\stackrel{X}{\to}BT$. The cohomology ring $H^*_T(X;R)=H^*(X_T;R)$ is called the equivariant cohomology ring of $X$. Via the induced map $p^*\colon H^*(BT;R)\to H^*(X_T;R)$, the equivariant cohomology get the natural structure of the graded module over $H^*(BT;R)\cong R[k]$, the polynomial ring in $k=\dim T$ generators of degree $2$. The fibration $p$ induces Serre spectral sequence:
\begin{equation}\label{eqSerreSpSec}
E_2^{p,q}\cong H^p(BT;R)\otimes H^q(X;R)\Rightarrow H_T^{p+q}(X;R)
\end{equation}
since $BT$ is simply connected. The $T$-action on $X$ is called \emph{cohomologically equivariantly formal} (over $R$) if the Serre specral sequence~\eqref{eqSerreSpSec} collapses at $E_2$. According to~\cite[Lm.2.1]{Masuda_Panov:2006}, the action with isolated fixed points is equivariantly formal if and only if there holds $H^{\odd}(X;R)=0$.

It should be noticed that actions of algebraic torus on smooth projective complex varieties with isolated fixed points are always equivariantly formal, as follows from Bialynicki-Birula theory~\cite{Bialynicki-Birula:1973}.
\end{con}

\subsection{Isospectral matrix manifolds}\label{subsecMatrices}

In this subsection we recall several basic facts related to spaces of isospectral Hermitian matrices. Details can be found in~\cite{Ayzenberg_Buchstaber_IMRN:2021}.

\begin{con}\label{conGammaShapedMatrices}
Let $M_n$ denote the vector space of all Hermitian matrices of size $n\times n$. This is a real vector space of dimension $n^2$. Let $\Gamma=([n],E_\Gamma)$ be a simple graph. A matrix $A=(a_{i,j})\in M_n$ is called $\Gamma$-shaped if $a_{i,j}=0$ for $\{i,j\}\notin E_\Gamma$. The space $M_\Gamma$ of all $\Gamma$-shaped matrices is a real vector space of dimension $n+2|E_\Gamma|$.

For a given set $\lambda = \{\lambda_1,\ldots,\lambda_{n}\}$ of real numbers consider the subset $M_{\Gamma,\lambda}\subset M_\Gamma$ of all $\Gamma$-shaped matrices with eigenvalues $\{\lambda_1,\ldots,\lambda_{n}\}$. It follows from Sard's lemma that $M_{\Gamma,\lambda}$ is a smooth closed submanifold in $M_\Gamma$ for generic choice of $\lambda$. Simple count of parameters implies $\dim_\Ro M_{\Gamma,\lambda} = 2|E_\Gamma|$. In the following, it is always assumed that $\lambda$ is generic, in the sense that $M_{\Gamma,\lambda}$ is smooth. In most part of the paper we also assume that $\lambda_i$'s are pairwise distinct --- unless stated otherwise. Throughout the paper the term isospectral matrix manifold refers to manifolds $M_{\Gamma,\lambda}$.
\end{con}

\begin{rem}\label{remBlocks}
If $\Gamma$ is disconnected, then so is the manifold $M_{\Gamma,\lambda}$. Indeed, if $\Gamma$ has connected components $\Gamma_1,\ldots,\Gamma_s$ with $n_1,\ldots,n_s$ vertices respectively, then $\Gamma$-shaped matrix is block-shaped with blocks corresponding to $\Gamma_i$. In this case $M_{\Gamma,\lambda}$ has ${n\choose n_1,\ldots,n_s}$ pieces of the form $M_{\Gamma_1,\lambda|_{A_1}}\times\cdots\times M_{\Gamma_s,\lambda|_{A_s}}$ corresponding to the ways of distributing the eigenvalues $\lambda_i$ among the blocks. This argument is extended in the proof of Proposition~\ref{propFacePoset} below. Notice that each piece $M_{\Gamma_1,\lambda|_{A_1}}\times\cdots\times M_{\Gamma_s,\lambda|_{A_s}}$ may be disconnected itself, see Remark~\ref{remConnectivityIssue} below.
\end{rem}

\begin{con}\label{conTorusAcnOnMatrices}
Let $U(n)$ be the compact Lie group of unitary matrices and $T^{n}\subseteq U(n)$ be the compact torus of diagonal unitary matrices, $T^{n}=\{\diag(t_1,\ldots,t_{n}), t_i\in \Co, |t_i|=1\}$. The group $U(n)$ acts on $M_n$ by conjugation, which induces the action of $T^n$. In coordinate notation, if $A=(a_{i,j})$, then the $T^n$-action has the form
\begin{equation}\label{eqActionCoord}
(a_{ij})_{\substack{i=1,\ldots,n\\j=1,\ldots,n}}\mapsto
(t_it_j^{-1}a_{ij})_{\substack{i=1,\ldots,n\\j=1,\ldots,n}}
\end{equation}
It can be seen that the subspace $M_\Gamma$ is stable under the $T^n$-action. Since the conjugation preserves the spectrum, the submanifold $M_{\Gamma,\lambda}$ is stable under the $T^n$-action.
\end{con}

\begin{rem}\label{remComplexity}
Notice that $T^n$-action on $M_{\Gamma,\lambda}$ is not effective: the subgroup $\Delta(T^1)\subset T^n$ of scalar matrices acts trivially. If $\Gamma$ is connected, $\Delta(T^1)$ is precisely the noneffective kernel of the action. In general, the noneffective kernel is the product of diagonal subgroups corresponding to connected components. Hence the noneffective kernel has dimension $c(\Gamma)$, the number of connected components. Since $\dim_\Ro M_{\Gamma,\lambda}=2|E_\Gamma|$, the complexity of torus action on $M_{\Gamma,\lambda}$ equals
\begin{equation}
\compl(M_{\Gamma,\lambda})=|E_\Gamma|-(n-c(\Gamma))=\beta_1(\Gamma),
\end{equation}
the circuit rank of a graph $\Gamma$.
\end{rem}

\begin{ex}\label{exFullFlagVar}
Let $\Gamma=K_n$ be the complete graph on $n$ vertices. The manifold $M_{K_n,\lambda}$ is the manifold of all Hermitian matrices with the given eigenvalues. Notice that not only the torus, but the whole group $U(n)$ acts on $M_{K_n,\lambda}$ in this case. The $U(n)$-action is transitive, and has stabilizer $T^n$ (if $\lambda_i$ are pairwise different as assumed in Construction~\ref{conGammaShapedMatrices}). Therefore $M_{K_n,\lambda}$ is a homogeneous space $U(n)/T^n$, --- the full flag variety $\Fl(\Co^n)$. If some of $\lambda_i$'s coincide, one gets the partial flag varieties, see Section~\ref{secIdentifications}.
\end{ex}

\begin{ex}\label{exTomei}
If $\Gamma$ is a simple path $\I_n$ on $n$ vertices, having edges $\{1,2\},\ldots,\{n-1,n\}$, then $M_{\I_n,\lambda}$ is the space of tridiagonal isospectral matrices --- the Hermitian Tomei manifold. It was studied in~\cite{Tomei:1984} and~\cite{Bloch_Flaschka_Ratiu:1990}. The manifold $M_{\I_n,\lambda}$ provides an example of a quasitoric manifold which is not a toric variety, see~\cite{Davis_Januszkiewicz:1991}.
\end{ex}

\begin{ex}
The isospectral matrix spaces, corresponding to cycle-graphs, star-graphs, and indifference graphs, were studied in \cite{Ayzenberg:2020,Ayzenberg_Buchstaber_MS:2021,Ayzenberg_Buchstaber_IMRN:2021} respectively.
\end{ex}

\begin{rem}\label{remMatricesFixedPoints}
Independently of $\Gamma$, the torus action on $M_{\Gamma,\lambda}$ has exactly $n!$ isolated fixed points. Fixed points are given by the diagonal Hermitian matrices $A_\sigma=\diag(\lambda_{\sigma(1)},\ldots,\lambda_{\sigma(n)})$.
\end{rem}

\begin{prop}\label{propTangRepMatrices}
At any fixed point $A_\sigma$, the tangent representation $T_{A_\sigma}M_{\Gamma,\lambda}$ has weights of the form
\[
\{\epsilon_i-\epsilon_j\mid \{i,j\}\in\Gamma\},
\]
where $\epsilon_1,\ldots,\epsilon_n$ is the standard basis of $\Hom(T^n,S^1)\cong\Zo^n$, that is $\epsilon_i((t_1,\ldots,t_n))=t_i$ for $i=1,\ldots,n$.
\end{prop}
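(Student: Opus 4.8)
The plan is to exploit that $M_\Gamma$ is itself a linear $T^n$-representation, so that $T_{A_\sigma}M_\Gamma$ is canonically identified with $M_\Gamma$, and then to pin down the invariant subspace $T_{A_\sigma}M_{\Gamma,\lambda}$ by a dimension count. First I would decompose $M_\Gamma$ as a $T^n$-module. By formula~\eqref{eqActionCoord}, each diagonal entry $a_{ii}$ spans a fixed real line, while for an edge $\{i,j\}\in E_\Gamma$ the entry $a_{ij}$ is a complex coordinate on which the torus acts by $a_{ij}\mapsto t_it_j^{-1}a_{ij}$ (and $a_{ji}=\overline{a_{ij}}$ is then determined), so the associated real $2$-plane is a copy of $V(\epsilon_i-\epsilon_j)$. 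Hence, as a $T^n$-representation,
\[
M_\Gamma\;\cong\;\Ro^n\oplus\bigoplus_{\{i,j\}\in E_\Gamma}V(\epsilon_i-\epsilon_j),
\]
with $\Ro^n$ the trivial summand carried by the diagonal entries.

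Next, since $M_{\Gamma,\lambda}\subseteq M_\Gamma$ is $T^n$-stable (Construction~\ref{conTorusAcnOnMatrices}) and $A_\sigma$ is a $T^n$-fixed point, the tangent space $T_{A_\sigma}M_{\Gamma,\lambda}$ is a $T^n$-invariant subspace of $M_\Gamma$. Because $\epsilon_i-\epsilon_j=\pm(\epsilon_k-\epsilon_l)$ holds only when $\{i,j\}=\{k,l\}$, the summands $V(\epsilon_i-\epsilon_j)$ are pairwise non-isomorphic irreducible real $T^n$-modules; so any invariant subspace of $M_\Gamma$ has the form $V'\oplus\bigoplus_{\{i,j\}\in E'}V(\epsilon_i-\epsilon_j)$ for a subspace $V'\subseteq\Ro^n$ and a subset $E'\subseteq E_\Gamma$ (each nontrivial summand being included entirely or not at all). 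It then remains to show $V'=0$ and $E'=E_\Gamma$ for $T_{A_\sigma}M_{\Gamma,\lambda}$. By Remark~\ref{remMatricesFixedPoints} the fixed point $A_\sigma$ is isolated, so the equivariant slice theorem forces the trivial summand of the tangent representation to vanish, that is $V'=0$ (cf.\ Construction~\ref{conTangentWeights}); and then, comparing real dimensions with $\dim_\Ro M_{\Gamma,\lambda}=2|E_\Gamma|$ from Construction~\ref{conGammaShapedMatrices}, we get $2|E'|=2|E_\Gamma|$, hence $E'=E_\Gamma$. Therefore $T_{A_\sigma}M_{\Gamma,\lambda}\cong\bigoplus_{\{i,j\}\in E_\Gamma}V(\epsilon_i-\epsilon_j)$, which is exactly the asserted list of tangent weights.

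A more hands-on alternative is to write $M_{\Gamma,\lambda}=M_\Gamma\cap\mathcal{O}_\lambda$, where $\mathcal{O}_\lambda=U(n)\cdot\diag(\lambda_1,\dots,\lambda_n)$ is the full isospectral manifold of Hermitian matrices, and to compute $T_{A_\sigma}\mathcal{O}_\lambda=\{[X,A_\sigma]\mid X\in\mathfrak{u}(n)\}$; since $[X,A_\sigma]_{kl}=(\lambda_{\sigma(l)}-\lambda_{\sigma(k)})x_{kl}$ and the $\lambda_i$ are distinct, this is the space of Hermitian matrices with zero diagonal. Then $T_{A_\sigma}M_{\Gamma,\lambda}\subseteq T_{A_\sigma}M_\Gamma\cap T_{A_\sigma}\mathcal{O}_\lambda=\{\Gamma\text{-shaped Hermitian matrices with zero diagonal}\}$, and equality again follows from $\dim_\Ro M_{\Gamma,\lambda}=2|E_\Gamma|$. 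In both versions the one step that is not purely formal is this last one: a priori one learns only that the weights of $T_{A_\sigma}M_{\Gamma,\lambda}$ form a \emph{subset} of $\{\epsilon_i-\epsilon_j\mid\{i,j\}\in E_\Gamma\}$, and excluding a trivial summand --- equivalently, showing that the entire set of weights occurs --- is precisely where isolatedness of $A_\sigma$ together with the genericity of $\lambda$, which guarantees $\dim_\Ro M_{\Gamma,\lambda}=2|E_\Gamma|$, come into play.
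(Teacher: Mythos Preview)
Your proposal is correct. The paper's own proof is precisely your ``hands-on alternative'': it asserts (as an ``easily seen'' fact) that $T_{A_\sigma}M_{\Gamma,\lambda}$ equals the space of $\Gamma$-shaped Hermitian matrices with zero diagonal, and then reads off the weight decomposition $\bigoplus_{\{i,j\}\in E_\Gamma}V_{ij}$ exactly as you do. Your main argument takes a slightly different, more representation-theoretic route: instead of identifying the tangent space explicitly, you classify all $T^n$-invariant subspaces of $M_\Gamma$ via its isotypic decomposition, kill the trivial summand using isolatedness of $A_\sigma$, and then force $E'=E_\Gamma$ by a dimension count. This buys you a proof that never touches the coadjoint-orbit tangent space or the commutator $[X,A_\sigma]$, at the cost of invoking the slice theorem and the genericity of~$\lambda$ (through $\dim M_{\Gamma,\lambda}=2|E_\Gamma|$) as black boxes; the paper's version is more elementary but leaves the tangent-space identification to the reader, which your second paragraph nicely supplies.
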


\begin{proof}
It is easily seen that the tangent space to $M_{\Gamma,\lambda}$ at $A_\sigma$ consists of all Hermitian $\Gamma$-shaped matrices with zeroes at the diagonal. For $i<j$ let us introduce the vector subspace $V_{i,j}\subset M_n$:
\[
V_{ij}=\{A=(a_{i',j'})\mid a_{i',j'}=0\mbox{ unless }(i',j')=(i,j)\mbox{ or }(j,i)\}
\]
(there is a unique element $a_{i,j}$ above the diagonal which is arbitrary, others are zero). This is basically the root subspace of the tangent Lie algebra of $U(n)$ corresponding to the root $\epsilon_i-\epsilon_j$ (see details in Construction~\ref{conLieGen} below). We have
\begin{equation}\label{eqDecompTangent}
T_{A_\sigma}M_{\Gamma,\lambda}=\bigoplus_{\{i,j\}\in E_\Gamma}V_{ij}.
\end{equation}
The space $V_{ij}$ is $T$-invariant, and $T$ acts on $V_{ij}$ by $t\cdot a_{i,j}\mapsto t_it_j^{-1}a_{i,j}$. Therefore $V_{ij}$ is the irreducible real 2-dimensional representation with weight $\epsilon_i-\epsilon_j$. The decomposition~\eqref{eqDecompTangent} proves the statement.
\end{proof}

\begin{rem}\label{remConnectivityIssue}
Looking at Remark~\ref{remBlocks}, it is tempting to say that whenever $\Gamma$ is connected, then so is $M_{\Gamma,\lambda}$. This holds true for all known examples. We inaccurately stated it as a general fact in~\cite{Ayzenberg_Buchstaber_IMRN:2021}, but this statement lacks the proof for the moment. Nevertheless, the modified statement holds true, which is sufficient for our considerations.
\end{rem}

\begin{lem}\label{lemConnectivity}
Let $\Gamma=([n],E_\Gamma)$ be a connected graph. Then all diagonal matrices $A_\sigma$ (that are the fixed points of the torus action) lie in a single connected component of $M_{\Gamma,\lambda}$.
\end{lem}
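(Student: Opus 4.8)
The plan is to connect any two fixed points by concatenating explicit paths, each of which swaps two eigenvalues that sit at positions joined by an edge of $\Gamma$.

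First I would reduce to the case of a single such swap. Since $\Gamma$ is connected, the transpositions $\{(i,j)\mid\{i,j\}\in E_\Gamma\}$ generate $\Sigma_n$ --- equivalently, the Cayley graph $\Cayley_\Gamma$ of Proposition~\ref{propGrAndCayley} is connected. Hence, given $\sigma,\tau\in\Sigma_n$, we can write $\sigma^{-1}\tau=(i_1,j_1)\circ\cdots\circ(i_m,j_m)$ with each $\{i_k,j_k\}\in E_\Gamma$, and it suffices to exhibit, for each edge $\{i,j\}\in E_\Gamma$ and each $\rho\in\Sigma_n$, a path in $M_{\Gamma,\lambda}$ from $A_\rho$ to $A_{\rho\circ(i,j)}$; concatenating the $m$ resulting paths along this factorization then joins $A_\sigma$ to $A_\tau$.

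Second, I would write the elementary path down by conjugating with a block rotation. Fix $\{i,j\}\in E_\Gamma$ and let $R_{ij}(\theta)\in U(n)$ be the orthogonal matrix acting on the coordinate plane $\langle e_i,e_j\rangle$ by $e_i\mapsto\cos\theta\,e_i+\sin\theta\,e_j$, $e_j\mapsto-\sin\theta\,e_i+\cos\theta\,e_j$, and trivially on the remaining basis vectors. Set $A(\theta)=R_{ij}(\theta)\,A_\rho\,R_{ij}(\theta)^{-1}$ for $\theta\in[0,\tfrac{\pi}{2}]$. Conjugation preserves the spectrum, so $A(\theta)$ is isospectral to $A_\rho$, i.e.\ has eigenvalue set $\lambda$. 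Because $R_{ij}(\theta)$ is the identity outside $\langle e_i,e_j\rangle$ and $A_\rho$ is diagonal, the matrix $A(\theta)$ coincides with $A_\rho$ except in the four entries indexed by $\{i,j\}$; in particular its only possibly nonzero off-diagonal entries are $a_{ij},a_{ji}$, which are permitted since $\{i,j\}\in E_\Gamma$, so $A(\theta)\in M_{\Gamma,\lambda}$ for every $\theta$. Finally $A(0)=A_\rho$, and at $\theta=\tfrac{\pi}{2}$ the rotation interchanges $e_i$ and $e_j$ up to sign, which for the diagonal matrix $A_\rho$ amounts to interchanging its $i$-th and $j$-th diagonal entries: $A(\tfrac{\pi}{2})=A_{\rho\circ(i,j)}$. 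This is the desired elementary path and completes the argument.

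There is no serious obstacle here: the assertion is much weaker than the (still unproven, see Remark~\ref{remConnectivityIssue}) statement that $M_{\Gamma,\lambda}$ is connected whenever $\Gamma$ is, and for the fixed points the $2\times 2$ rotations above already suffice. The only point that needs a line of care is verifying that $A(\theta)$ stays $\Gamma$-shaped all along the path --- that no spurious off-diagonal entries appear --- which is immediate from the fact that $R_{ij}(\theta)$ acts trivially on $\langle e_k\mid k\neq i,j\rangle$, so conjugation only alters the principal $2\times 2$ block of $A_\rho$ on the rows and columns $i,j$.
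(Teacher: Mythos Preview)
Your proof is correct and follows essentially the same approach as the paper: both reduce to connecting $A_\rho$ and $A_{\rho\circ(i,j)}$ for an edge $\{i,j\}\in E_\Gamma$ via matrices supported on the $2\times2$ block at positions $i,j$, and then invoke connectivity of $\Cayley_\Gamma$. The only cosmetic difference is that the paper observes that the whole $2$-sphere $M_{K_2,\lambda}\cong\CP^1$ of such block matrices sits inside $M_{\Gamma,\lambda}$ and appeals to its connectivity, whereas you write down an explicit arc on that sphere by conjugating with the rotation $R_{ij}(\theta)$.
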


\begin{proof}
At first, let $\Gamma=K_2$ be a single edge. In this case $M_{K_2,\lambda}\cong \Fl(\Co^2)\cong\CP^1\cong S^2$, which is connected. Now switch to the general situation. If $\sigma=\tau\cdot(i,j)$ with $(i,j)\in E_\Gamma$, then the matrices $A_\sigma$ and $A_\tau$ are connected by a $2$-sphere of isospectral matrices of the form
\[
\begin{pmatrix}
  \lambda_{\sigma(1)} & 0 &  & \cdots &  & 0 \\
  0 & \ddots &  &  &  &  \\
   &  & * & * &  & \vdots \\
  \vdots &  & * & * &  &  \\
   &  &  &  & \ddots & 0 \\
  0 &  & \cdots &  & 0 & \lambda_{\sigma(n)}
\end{pmatrix}=\begin{pmatrix}
  \lambda_{\tau(1)} & 0 &  & \cdots &  & 0 \\
  0 & \ddots &  &  &  &  \\
   &  & * & * &  & \vdots \\
  \vdots &  & * & * &  & \vdots \\
   &  &  &  & \ddots & 0 \\
  0 &  & \cdots &  & 0 & \lambda_{\tau(n)}
\end{pmatrix}
\]
with the block on positions $i,j$. Connectivity between diagonal matrices therefore follows from the fact that any two permutations can be connected by a sequence of transpositions corresponding to edges of the connected graph $\Gamma$. This is a standard exercise: connectivity of $\Gamma$ implies connectivity of $\Cayley_\Gamma$.
\end{proof}

\subsection{Twins of isospectral matrix manifolds}\label{subsecTwins}

The notion of twin submanifolds in the full flag variety was introduced by the authors in~\cite{Ayzenberg_Buchstaber_IMRN:2021}. We briefly recall the construction and the main properties.

\begin{con}\label{conTwins}
Let $U(n)$ be the group of unitary matrices of size $n$. Consider the maximal torus $T=T^n\subset U(n)$ of diagonal matrices and two projection maps
\[
\pi_r\colon U(n)\to U(n)/T,\quad \pi_l\colon U(n)\to T\backslash U(n).
\]
Both coset spaces $U(n)/T$ and $T\backslash U(n)$ are diffeomorphic to the full flag variety $\Fl(\Co^n)$. There exists a residual action of $T$ on $U(n)/T$ from the left, and a residual action of $T$ on $T\backslash U(n)$ from the right. Both actions coincide with the standard action of a torus on a flag full variety. Both projection maps $\pi_r$ and $\pi_l$ are smooth submersions as well as fibre bundles with fiber $T$.
\end{con}

\begin{defin}\label{definTwins}
Let $X\subset U(n)/T$ be a subspace stable under the residual action of $T$ on $U(n)/T$. Notice that the full preimage $Z=\pi_r^{-1}(X)\subset U(n)$ is invariant under the two-sided action of $T\times T$ on $U(n)$. Consider the subspace $X'=\pi_l(Z)=\pi_l(\pi_r^{-1}(X))\subset T^n\backslash U(n)$. The spaces $X$ and $X'$ in the full flag variety are called the \emph{twins}.
\end{defin}

\begin{rem}\label{remSmoothnessOfTwins}
The twin $X'$ is smooth whenever $X$ is smooth, since both conditions are equivalent to smoothness of $Z=\pi_r^{-1}(X)=\pi_l^{-1}(X')$. In this case they are called twin manifolds.
\end{rem}

In~\cite{Ayzenberg_Buchstaber_IMRN:2021} we studied the manifolds of isospectral staircase Hermitian matrices and proved that their twins are regular semisimple Hessenberg varieties~\cite{Abe_Horiguchi:2019}. Even this class of examples is interesting, it shows that the twins may be non-diffeomorphic (and even non-homotopic).

Let us construct the twin of $M_{\Gamma,\lambda}$ for general graphs. As before, $\Gamma$ is a simple graph on $n$ vertices. For this graph, we denote by $\str(i)$ the neighborhood of a vertex $i$, that is the set of all vertices adjacent to $i$ united with $i$ itself.

\begin{con}\label{conTwinOfMatrixGen}
A full complex flag in $\Co^n$ can be naturally identified with the sequence of 1-dimensional linear subspaces
$L_1,\ldots,L_n\subset \Co^{n}$, which are pairwise orthogonal: $L_i\perp L_j$, $i\neq j$. Such sequence corresponds to the flag
\begin{equation}\label{eqFlagOrthogonal}
(L_1\subset L_1\oplus L_2\subset L_1\oplus L_2\oplus L_3\subset\cdots\subset\Co^n)\in\Fl(\Co^n)
\end{equation}
Consider the semisimple operator $D_\lambda\colon\Co^n\to\Co^n$ given by the diagonal matrix $\diag(\lambda_1,\ldots,\lambda_n)$ with distinct real eigenvalues. Let us define the subset $M'_{\Gamma,\lambda}\subset \Fl(\Co^n)$:
\[
M'_{\Gamma,\lambda}=\left\{\{L_i\}\in \Fl(\Co^n)\mid\forall i\in[n] \colon D_\lambda L_i\subset \bigoplus\nolimits_{j\in\str(i)} L_j\right\}.
\]
The standard action of $T^n$ on $\Co^n$ induces an action of $T^n$ on $M'_{\Gamma,\lambda}$.
\end{con}

\begin{prop}\label{propTwinsMatrices}
The manifolds $M_{\Gamma,\lambda}$ and $M'_{\Gamma,\lambda}$ are twins.
\end{prop}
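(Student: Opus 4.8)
The plan is to make the two identifications of Construction~\ref{conTwins} completely explicit and then translate the condition ``$\Gamma$-shaped'' into the incidence condition defining $M'_{\Gamma,\lambda}$ by one short matrix computation. By Example~\ref{exFullFlagVar} the map $\Phi\colon U(n)/T\to M_{K_n,\lambda}$, $UT\mapsto UD_\lambda U^{-1}$, is a diffeomorphism (well defined since $T$ commutes with $D_\lambda$, bijective since $\lambda_i$ are pairwise distinct), and it intertwines the residual left $T$-action on $U(n)/T$ with the conjugation action of Construction~\ref{conTorusAcnOnMatrices}. Since $M_{\Gamma,\lambda}$ is conjugation-invariant, it corresponds under $\Phi$ to the $T$-stable subset $X=\{UT\mid UD_\lambda U^{-1}\text{ is }\Gamma\text{-shaped}\}\subset U(n)/T$. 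According to Definition~\ref{definTwins} we must then compute $Z=\pi_r^{-1}(X)=\{U\in U(n)\mid UD_\lambda U^{-1}\text{ is }\Gamma\text{-shaped}\}$, form $X'=\pi_l(Z)\subset T\backslash U(n)$, and recognize $X'$, under the standard identification $T\backslash U(n)\cong\Fl(\Co^n)$, as $M'_{\Gamma,\lambda}$.

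First I would fix the standard identification $\Psi\colon T\backslash U(n)\xrightarrow{\ \sim\ }\Fl(\Co^n)$ to be the one sending $TU$ to the flag~\eqref{eqFlagOrthogonal} associated with the pairwise orthogonal lines $L_i(U)=\langle w_i(U)\rangle$, where $w_i(U)\in\Co^n$ denotes the $i$-th row of $U$. This descends to cosets because left multiplication by $t\in T$ merely rescales each $w_i(U)$, and it intertwines the residual right $T$-action with the standard $T^n$-action on flags used in Construction~\ref{conTwinOfMatrixGen} (the $i$-th row of $Ut$ is the coordinatewise scaling $t\cdot w_i(U)$). Since $U$ is unitary, $w_1(U),\dots,w_n(U)$ form an orthonormal basis of $\Co^n$.

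The key step is the computation that follows. Writing $B=UD_\lambda U^{-1}=UD_\lambda U^{*}$ and using $(U^{*})_{kj}=\overline{U_{jk}}$ one gets $B_{ij}=\sum_k U_{ik}\lambda_k\overline{U_{jk}}=\langle D_\lambda\, w_i(U),\,w_j(U)\rangle$, where $\langle x,y\rangle=\sum_k x_k\overline{y_k}$; equivalently $D_\lambda\, w_i(U)=\sum_j B_{ij}\, w_j(U)$, i.e. $B$ is the matrix of $D_\lambda$ in the orthonormal basis $\{w_j(U)\}$. Since that basis spans $\Co^n$ and is orthogonal, for each fixed $i$ one has $B_{ij}=0$ for all $j\notin\str(i)$ if and only if $D_\lambda L_i(U)\subseteq\bigoplus_{j\in\str(i)}L_j(U)$. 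As $j\notin\str(i)$ holds exactly when $i\neq j$ and $\{i,j\}\notin E_\Gamma$, the matrix $B$ is $\Gamma$-shaped if and only if $D_\lambda L_i(U)\subseteq\bigoplus_{j\in\str(i)}L_j(U)$ for every $i\in[n]$. Hence $\Psi(\pi_l(Z))$ is exactly the set of flags $\{L_i\}$ with $D_\lambda L_i\subseteq\bigoplus_{j\in\str(i)}L_j$ for all $i$, which is $M'_{\Gamma,\lambda}$ by Construction~\ref{conTwinOfMatrixGen}. Thus $X'=\pi_l(\pi_r^{-1}(X))$ is identified with $M'_{\Gamma,\lambda}$, as asserted.

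The only genuinely delicate point is the bookkeeping of transposes, conjugates and the row/column conventions: $U(n)/T$ is naturally described by the flag of \emph{columns} of $U$, while $T\backslash U(n)$ must be described by the flag of \emph{rows} of $U$, and these must be chosen consistently so that $\Phi$ and $\Psi$ intertwine the two residual $T$-actions with, respectively, the conjugation action on $M_{\Gamma,\lambda}$ and the standard $T^n$-action on $M'_{\Gamma,\lambda}$. Two routine verifications finish the proof: the set $Z$ is invariant under the two-sided $T\times T$-action on $U(n)$ (under right translation because $t$ commutes with $D_\lambda$, under left translation because $M_{\Gamma,\lambda}$ is conjugation-invariant), so that indeed $Z=\pi_r^{-1}(X)=\pi_l^{-1}(X')$ as required by Definition~\ref{definTwins}; and, $\Phi$ and $\Psi$ being diffeomorphisms, smoothness of $M'_{\Gamma,\lambda}$ follows from smoothness of $M_{\Gamma,\lambda}$ in accordance with Remark~\ref{remSmoothnessOfTwins}.
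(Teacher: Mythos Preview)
Your proof is correct and is precisely the standard argument the paper defers to by citing \cite[Thm.~3.10]{Ayzenberg_Buchstaber_IMRN:2021} and \cite[Prop.~8.1]{Ayzenberg_Buchstaber_MS:2021}: identify $U(n)/T\cong M_{K_n,\lambda}$ via $UT\mapsto UD_\lambda U^*$, identify $T\backslash U(n)\cong\Fl(\Co^n)$ via the rows of $U$, and observe that $B=UD_\lambda U^*$ being $\Gamma$-shaped is equivalent to the incidence condition $D_\lambda L_i\subset\bigoplus_{j\in\str(i)}L_j$ for the row-lines $L_i$. Your care with the row/column and left/right bookkeeping is exactly what is needed, and there are no gaps.
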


The proof is completely analogous to~\cite[Thm.3.10]{Ayzenberg_Buchstaber_IMRN:2021} or~\cite[Prop.8.1]{Ayzenberg_Buchstaber_MS:2021}. We have the following general statement.

\begin{prop}\label{propOrbitsOfTwins}
If $X$, $X'$ are the twins, then the orbit spaces $X/T$ and $X'/T$ are homeomorphic, and the homeomorphism can be chosen to preserve the dimensions of toric orbits.
\end{prop}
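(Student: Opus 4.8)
The plan is to identify both orbit spaces with a single space, namely $Z/(T\times T)$, where $Z=\pi_r^{-1}(X)=\pi_l^{-1}(X')$ (as in Remark~\ref{remSmoothnessOfTwins}) and $T\times T$ acts on $U(n)$ by $(t_1,t_2)\cdot g=t_1gt_2^{-1}$. First I would observe that, since $Z$ is the full $\pi_r$-preimage of $X$, it is saturated for the quotient map $\pi_r\colon U(n)\to U(n)/T$ (the quotient by the right factor $\{1\}\times T$), and $\pi_r$ is an open map because quotients by group actions are open; hence $\pi_r|_Z\colon Z\to X$ is again a quotient map and identifies $X$ with $Z/(\{1\}\times T)$. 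The residual left $T$-action on $X$ is induced by the left-factor action on $Z$, and since the two factors commute, passing to the further quotient yields a homeomorphism $X/T\cong Z/(T\times T)$ (composition of two group-quotient maps is a quotient map, with fibres exactly the $(T\times T)$-orbits). The symmetric argument using $\pi_l$ (the quotient by the left factor $T\times\{1\}$) gives $X'/T\cong Z/(T\times T)$. Composing, one obtains a homeomorphism $h\colon X/T\to X'/T$ that sends the $T$-orbit of $\pi_r(g)$ to the $T$-orbit of $\pi_l(g)$ for every $g\in Z$; this is well defined since the remaining ambiguity in $g$ is precisely a $(T\times T)$-translation.

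Next I would check that $h$ preserves the dimensions of toric orbits. Fix $g\in Z$ and set $x=\pi_r(g)=gT\in X$ and $x'=\pi_l(g)=Tg\in X'$. A direct computation gives
\[
\mathrm{Stab}_T(x)=\{t\in T\mid g^{-1}tg\in T\}=T\cap gTg^{-1},\qquad
\mathrm{Stab}_T(x')=\{t\in T\mid gtg^{-1}\in T\}=T\cap g^{-1}Tg,
\]
for the left and right $T$-actions respectively. Conjugation by $g$ restricts to a continuous group isomorphism $T\cap g^{-1}Tg\to T\cap gTg^{-1}$, $t\mapsto gtg^{-1}$, so the two stabilizers have the same dimension, whence $\dim(Tx)=\dim T-\dim\mathrm{Stab}_T(x)=\dim T-\dim\mathrm{Stab}_T(x')=\dim(x'T)$. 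Thus $h$ carries each toric orbit onto a toric orbit of the same dimension, which is the assertion.

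I do not expect a serious obstacle: the construction is elementary. The one step demanding care is the stabilizer computation, where the sides (left versus right cosets and actions) must be tracked with precision — this is exactly the reason Construction~\ref{conTwins} distinguishes $U(n)/T$ from $T\backslash U(n)$. It is also worth remarking that the argument uses neither smoothness nor compactness of $X$: the identifications $X\cong Z/(\{1\}\times T)$ and $X/T\cong Z/(T\times T)$ are purely topological facts about saturated subsets and iterated quotients by commuting group actions, so the proposition holds for an arbitrary $T$-stable subspace $X\subseteq U(n)/T$, not only for twin manifolds.
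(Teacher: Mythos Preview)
Your proposal is correct and follows essentially the same route as the paper: both identify $X/T$ and $X'/T$ with the double quotient $T\backslash Z/T=Z/(T\times T)$. For the orbit-dimension statement you compute stabilizers explicitly and match them via conjugation by $g$, whereas the paper argues that each one-sided $T$-action on $Z\subset U(n)$ is free, so a $T$-orbit of dimension $j$ in $X$ lifts to a $(T\times T)$-orbit of dimension $j+n$ in $Z$ and then descends to a $T$-orbit of dimension $j$ in $X'$; the two arguments are equivalent, yours being slightly more explicit.
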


\begin{proof}
Both $X/T$ and $X'/T$ are homeomorphic to the double quotient $T\backslash Z/T$ in the notation of Definition~\ref{definTwins}. Since each one-sided action of $T$ on $Z$ is free, an orbit of $X/T$ of dimension $j$ corresponds to an orbit of dimension $j+n$ in $Z$, which corresponds to an orbit of $X'/T$ of dimension $j$.
\end{proof}

\begin{cor}\label{corPosetsOfTwins}
The face posets of twins are isomorphic: $S(X)\cong S(X')$.
\end{cor}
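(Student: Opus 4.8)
The plan is to deduce this corollary directly from Proposition~\ref{propOrbitsOfTwins} by observing that the face poset $S(X)$ of a torus action with isolated fixed points is entirely determined by the orbit space together with the function recording dimensions of toric orbits. First I would recall from Construction~\ref{conFacePoset} that a face $F$ is an orbit space $X_F/T$ of a face submanifold, sitting inside $X/T$, and that $\rk F = \dim T/T_F$ measures the dimension of the principal orbits over $F$. The key point is that face submanifolds, hence faces, can be recovered from the orbit map data: a face is (the closure of) a connected component of the locus in $X/T$ where the orbit dimension takes a fixed value, and the containment order among faces corresponds to the closure relation among these strata. So if $\varphi\colon X/T \to X'/T$ is the homeomorphism furnished by Proposition~\ref{propOrbitsOfTwins}, chosen to preserve the dimension of toric orbits over each point, then $\varphi$ carries the stratification of $X/T$ by orbit dimension to the corresponding stratification of $X'/T$, hence induces a bijection between strata compatible with closures.

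The second step is to promote this bijection of strata to an isomorphism of the graded posets $S(X)$ and $S(X')$. Here I would argue that $\varphi$ maps each face $F\subset X/T$ homeomorphically onto a face $\varphi(F)\subset X'/T$, that $F\subseteq G$ implies $\varphi(F)\subseteq \varphi(G)$ (since $\varphi$ is a homeomorphism it preserves inclusions and closures of subsets), and that $\rk F = \rk\varphi(F)$ because $\varphi$ preserves orbit dimensions, so the noneffective kernels $T_F$ and $T_{\varphi(F)}$ have the same dimension and thus $\rk F = n - \dim T_F = \rk \varphi(F)$ under $\dim T = n$ (for the flag variety ambient torus; in general $\dim T - \dim T_F$). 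Applying the same reasoning to $\varphi^{-1}$ shows the correspondence $F\mapsto\varphi(F)$ is a bijection with monotone inverse, hence an isomorphism of posets respecting the rank function.

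The one technical point that needs care — and what I expect to be the main obstacle — is verifying that $\varphi$ really sends faces to faces, i.e. that the notion of ``face submanifold'' depends only on the orbit space and the orbit-dimension function, and not on further geometric structure of $X$. This requires knowing that a face submanifold $X_F$ is a connected component of a fixed-point set $X^H$ containing a fixed point, and that its image in the orbit space is exactly the closure of a connected component of the stratum $\{ \text{orbit dim} = j \}$ for the appropriate $j$; equivalently, that every such stratum-closure is a face and conversely. This is essentially the content of the local structure of torus actions with isolated fixed points (the slice theorem, cf.\ the discussion preceding Proposition~\ref{propAyzCherep} and the reference~\cite{Ayzenberg_Cherepanov:2022}), so I would invoke that theory rather than reprove it. Once that identification is in hand, the corollary follows formally, and in fact one gets the slightly stronger statement that the isomorphism $S(X)\cong S(X')$ is induced by $\varphi$ and is compatible with all the structure (rank, the geometric lattices $S(X)_{\geqslant x}$, etc.).
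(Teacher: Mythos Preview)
Your proposal is correct and follows exactly the reasoning the paper intends: the corollary is stated immediately after Proposition~\ref{propOrbitsOfTwins} with no separate proof, the implicit argument being precisely that the face poset is determined by the orbit space together with the orbit-dimension function, so the orbit-dimension-preserving homeomorphism $X/T\cong X'/T$ induces $S(X)\cong S(X')$. You have simply spelled out in detail what the paper leaves tacit; the one technical point you flag (that faces are recoverable from the stratification by orbit dimension, with the ``contains a fixed point'' condition preserved because $\varphi$ sends the $0$-dimensional-orbit locus to itself) is indeed the only thing to check, and your plan to handle it is sound.
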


\begin{rem}\label{remAboutDeMari}
The philosophy behind Construction~\ref{conTwinOfMatrixGen} is very much in spirit to the philosophy of the paper~\cite{DeMari_Shayman:1988} where (semisimple) Hessenberg varieties were first introduced. A Hessenberg variety was originally considered as a set of all unitary matrices which transform the given semisimple matrix to a matrix of a given Hessenberg form. The condition of Hessenberg form is replaced by any $\Gamma$-shaped form in our work.

The resulting space of matrices may be either a manifold $M'_{\Gamma,\lambda}$ or $M_{\Gamma,\lambda}$ depending on the choice of correspondence between unitary matrices and flags. In any case, historically, there is a strong connection between the theory of Hessenberg matrices and certain topics of computational linear algebra, such as QR-algorithms and generalized Toda flows.
\end{rem}

\section{The proof of Theorem~\ref{thmMainAll}}\label{secProofs}

With all definitions and preliminary observations of the previous section, the reader should be able to prove Theorem~\ref{thmMainAll} on one's own. However, we break the theorem into separate statements and prove them in this section.

\begin{prop}\label{propFacePoset}
For any graph $\Gamma=(V_\Gamma,E_\Gamma)$ and any generic spectrum $\lambda$, the face poset of the torus action on $M_{\Gamma,\lambda}$ is isomorphic to the cluster-permutohedron $\Cl_\Gamma$.
\end{prop}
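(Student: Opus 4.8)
The plan is to enumerate the face submanifolds of $M_{\Gamma,\lambda}$ directly and to identify them with the pairs $(\ca{C},\ca{A})$ that make up $\Cl_\Gamma$. Recall (Construction~\ref{conFacePoset}) that these are the connected components, containing a $T$-fixed point, of the fixed submanifolds $M_{\Gamma,\lambda}^H$, for $H\subseteq T^n$ a closed connected subgroup. By the coordinate description~\eqref{eqActionCoord} of the action, $A\in M_{\Gamma,\lambda}$ is fixed by $H$ iff $a_{ij}=0$ for every edge with $(\epsilon_i-\epsilon_j)|_H\neq 0$; hence $M_{\Gamma,\lambda}^H=M_{\Gamma^H,\lambda}$, where $\Gamma^H$ is the spanning subgraph of $\Gamma$ retaining exactly the edges $\{i,j\}$ with $(\epsilon_i-\epsilon_j)|_H=0$. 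I would then observe that $\Gamma^H$ is always a disjoint union of induced subgraphs: if $i,j$ lie in one component of $\Gamma^H$, then $\epsilon_i-\epsilon_j$ telescopes into a sum of weights vanishing on $H$, so $(\epsilon_i-\epsilon_j)|_H=0$, and therefore every edge of $\Gamma$ joining two vertices of that component already lies in $\Gamma^H$. Thus $\Gamma^H=\Gamma_{V_1}\sqcup\dots\sqcup\Gamma_{V_k}$ for a clustering $\ca{C}(H)=\{V_1,\dots,V_k\}\in\ca{L}_\Gamma$, and conversely every clustering arises (from the subtorus diagonal on each cluster). So $M_{\Gamma,\lambda}^H$ depends on $H$ only through $\ca{C}(H)$.

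By the block decomposition of Remark~\ref{remBlocks}, $M_{\Gamma_{V_1}\sqcup\dots\sqcup\Gamma_{V_k},\lambda}$ is the disjoint union, over all ways of partitioning the eigenvalues $\{\lambda_1,\dots,\lambda_n\}$ into blocks $B_l$ with $|B_l|=|V_l|$ (indexed by the clusters), of the products $M_{\Gamma_{V_1},\lambda|_{B_1}}\times\dots\times M_{\Gamma_{V_k},\lambda|_{B_k}}$; such distributions are precisely the assignments $\ca{A}$ for $\ca{C}$ in the sense of Construction~\ref{conClusterPerm} (matching a distribution with the corresponding coset is a bookkeeping of the left/right convention). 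The delicate point is that such a product may be disconnected, as Remark~\ref{remConnectivityIssue} warns, so one must \emph{not} take the whole product to be a face submanifold. Instead, applying Lemma~\ref{lemConnectivity} to each factor yields a distinguished connected component $X_{(\ca{C},\ca{A})}=D_1\times\dots\times D_k$ of the product that contains \emph{all} of its $T$-fixed points; being a product of connected components it is a connected component of $M_{\Gamma,\lambda}^H$, it contains fixed points, and so it is a face submanifold, while the remaining components of the product carry no fixed point. A diagonal matrix $A_\sigma$ sits in the product piece dictated by its own eigenvalue distribution, and then in the distinguished component, so $(\ca{C},\ca{A})\mapsto X_{(\ca{C},\ca{A})}$ maps onto all face submanifolds; it is injective because $(\ca{C},\ca{A})$ can be read off from $X_{(\ca{C},\ca{A})}$ via the tangent weights at any of its fixed points (Proposition~\ref{propTangRepMatrices}), these weights being pairwise distinct.

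It remains to check that this bijection is an isomorphism of graded posets. For the order: if $\ca{C}_1\leqslant\ca{C}_2$, then the edge set of $\Gamma^{H_1}$ lies in that of $\Gamma^{H_2}$ for the diagonal-on-clusters subtori, so $M_{\Gamma,\lambda}^{H_1}\subseteq M_{\Gamma,\lambda}^{H_2}$; being connected, $X_{(\ca{C}_1,\ca{A}_1)}$ lies in a single component of $M_{\Gamma,\lambda}^{H_2}$, which contains a fixed point and hence equals $X_{(\ca{C}_2,\ca{A})}$ for the $\ca{A}$ that is the distribution over $\ca{C}_2$ of any $A_\sigma\in X_{(\ca{C}_1,\ca{A}_1)}$ — that is, $\ca{A}=\pr_{\ca{C}_1\leqslant\ca{C}_2}(\ca{A}_1)$; conversely an inclusion $X_{(\ca{C}_1,\ca{A}_1)}\subseteq X_{(\ca{C}_2,\ca{A}_2)}$ forces $\ca{C}_1\leqslant\ca{C}_2$ by comparing tangent weights. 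This is exactly the order on $\Cl_\Gamma$. For the rank: the tangent weights of $T^n$ at a fixed point of $X_{(\ca{C},\ca{A})}=D_1\times\dots\times D_k$ are $\{\epsilon_i-\epsilon_j\mid \{i,j\}\in E_{\Gamma_{V_l}},\ 1\leqslant l\leqslant k\}$, since each $D_l$ is open in $M_{\Gamma_{V_l},\lambda|_{B_l}}$; as each $\Gamma_{V_l}$ is connected these span a codimension-$k$ subspace of $\Qo^n$, so the non-effective kernel of the $T^n$-action on $X_{(\ca{C},\ca{A})}$ is $k$-dimensional (the product, over clusters, of the diagonal circles, cf.\ Remark~\ref{remComplexity}), whence $\rk X_{(\ca{C},\ca{A})}=(n-1)-(k-1)=n-|\ca{C}|=\rk(\ca{C},\ca{A})$.

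The only genuine obstacle, in my view, is the second paragraph: one must resist identifying a face submanifold with a whole eigenvalue-distribution piece, since that piece may be disconnected (Remark~\ref{remConnectivityIssue}); Lemma~\ref{lemConnectivity} is precisely the tool producing a well-defined distinguished component and forcing every fixed point into it. The shape of $\Gamma^H$, the distributions-versus-cosets dictionary, and the order and rank computations are then routine. One could instead invoke Proposition~\ref{propAyzCherep} to obtain $S(M_{\Gamma,\lambda})_{\geqslant A_\sigma}\cong\Flats(\beta(\Gamma))=\ca{L}_\Gamma$ for free, but the way these copies of $\ca{L}_\Gamma$ are glued along faces of lower rank would still have to be pinned down by exactly the coset data used above.
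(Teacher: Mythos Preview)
Your proposal is correct and follows essentially the same route as the paper's proof: both identify the fixed-point submanifolds $M_{\Gamma,\lambda}^H$ as block-diagonal isospectral spaces indexed by clusterings, then use Lemma~\ref{lemConnectivity} to show that the connected components containing fixed points are parametrized exactly by assignments. The only cosmetic difference is the starting point---the paper first computes the stabilizer $T_{\ca{C}}$ of each point and then looks at fixed sets of those specific subtori, whereas you begin with an arbitrary closed connected $H$ and argue (via the telescoping observation that $\Gamma^H$ is a disjoint union of induced subgraphs) that the fixed set depends only on $\ca{C}(H)$; your version is also more explicit about the order- and rank-preservation, which the paper leaves implicit.
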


\begin{proof}
At first we describe which subgroups $G\subset T^n$ appear as the stabilizers of the torus action on $M_{\Gamma,\lambda}$. Let $A\in M_{\Gamma,\lambda}$ be a matrix. Consider the graph $\tilde{\Gamma}=(V_\Gamma,\tilde{E})$ with edges of the form
\[
\tilde{E}=\{\{i,j\}\mid a_{i,j}\neq 0\}
\]
Obviously, $\tilde{\Gamma}\subseteq\Gamma$. Let $V_1,\ldots,V_k$ be the vertex sets of connected components of $\tilde{\Gamma}$, they define the clustering $\ca{C}=\{V_1,\ldots,V_k\}$. It follows from the coordinate form of the action~\eqref{eqActionCoord} that the stabilizer of a matrix $A$ is the subtorus
\[
T_{\ca{C}}=\Delta_{V_1}(T^1)\times\cdots\times \Delta_{V_k}(T^1),
\]
where $\Delta_{V_j}\colon T^1\to T^{V_j}$ is the diagonal map on the coordinate subtorus, corresponding to the subset $V_j$. Henceforth, we have shown that all stabilizers of the action have the form $T_{\ca{C}}$ for all possible clusterings $\ca{C}\in\Flats(\Gamma)$.

For a subgroup $T_{\ca{C}}$, consider the submanifold $M_{\Gamma,\lambda}^{T_{\ca{C}}}$ of its fixed points. It is clear that an element $A$ of this subset is a block matrix, with blocks corresponding to the subsets $V_1,\ldots,V_k$, see Remark~\ref{remBlocks}. Since $A$ lies in the isospectral set corresponding to $\lambda$, this spectrum $\lambda$ somehow distributes among blocks. Therefore the manifold $M_{\Gamma,\lambda}^{T_{\ca{C}}}$ has connected components corresponding to all possible ways of distributing the spectrum among blocks, which would give an assignment.

More formally, let $p\colon V_\Gamma\to [n]$ be a bijection, and let $X_p^{\ca{C}}$ denotes the connected component of $M_{\Gamma,\lambda}^{T_{\ca{C}}}$ which contains the diagonal matrix $\diag(\lambda_{p(1)},\ldots,\lambda_{p(1)})$. Then we have
\begin{enumerate}
  \item If $p_1=\sigma\circ p_2$, where $\sigma\in \Sigma_{V_1}\times\cdots\times\Sigma_{V_k}\subseteq\Sigma_{V_\Gamma}$, then $X_{p_1}^{\ca{C}}=X_{p_2}^{\ca{C}}$.
  \item If $p_1$ differs from $p_2$ by a permutation, that mixes blocks, then $X_{p_1}^{\ca{C}}\neq X_{p_2}^{\ca{C}}$.
\end{enumerate}

Item 1 essentially follows from Lemma~\ref{lemConnectivity} and its proof. If $p_1$ differs from $p_2$ by a transposition, lying in a single block, then the matrices $A_{p_1}$ and $A_{p_2}$ are connected by a 2-sphere. Item 1 follows from the connectivity of Cayley graphs of separate blocks.

Item 2. Assume that, on the contrary, there exist $r_1\in V_i$, $r_2\in V_j$ such that $i\neq j$ but $p_1(r_1)=p_2(r_2)=s$. Then $X_{p_1}^{\ca{C}}$ consists of matrices, in which $\lambda_s$ is the eigenvalue of the $V_i$-block, while $X_{p_2}^{\ca{C}}$ consists of matrices in which $\lambda_s$ falls in $V_j$-block. Since we assumed all eigenvalues pairwise distinct, this is a contradiction.

We recall that a face submanifold is a connected component of $M_{\Gamma,\lambda}^{T_{\ca{C}}}$ containing a fixed point. Therefore, we see that connected components of $M_{\Gamma,\lambda}^{T_{\ca{C}}}$ with this property correspond to assignments, that are the left cosets $\Sigma_{\ca{C}}\backslash\Sigma_{V_\Gamma}$. The poset of pairs (a clustering, its assignment) is exactly the cluster-permutohedron $\Cl_\Gamma$.
\end{proof}

Corollary~\ref{corPosetsOfTwins} implies

\begin{cor}\label{corTwinMatrixPoset}
The face poset $S(M'_{\Gamma,\lambda})$ of the twin submanifold of $M_{\Gamma,\lambda}$ is also isomorphic to the cluster-permutohed\-ron~$\Cl_\Gamma$.
\end{cor}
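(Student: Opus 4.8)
The plan is to obtain the statement as a formal consequence of the results already assembled, with essentially no new computation. By Proposition~\ref{propTwinsMatrices} the manifolds $M_{\Gamma,\lambda}$ and $M'_{\Gamma,\lambda}$ are twins in the sense of Definition~\ref{definTwins}. Corollary~\ref{corPosetsOfTwins} then supplies an isomorphism of face posets $S(M'_{\Gamma,\lambda})\cong S(M_{\Gamma,\lambda})$. Finally, Proposition~\ref{propFacePoset} identifies $S(M_{\Gamma,\lambda})$ with the cluster-permutohedron $\Cl_\Gamma$. Composing the two isomorphisms $S(M'_{\Gamma,\lambda})\cong S(M_{\Gamma,\lambda})\cong\Cl_\Gamma$ yields the claim.

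The only point where one might want to pause is the step, already contained in Corollary~\ref{corPosetsOfTwins}, passing from the orbit-space homeomorphism of Proposition~\ref{propOrbitsOfTwins} to an isomorphism of face posets: one should note that the homeomorphism $X/T\to X'/T$ preserves the dimensions of toric orbits, hence carries the orbit-type stratification of $X/T$ to that of $X'/T$, so that it sends face submanifolds to face submanifolds (a face submanifold being detected by containing an orbit of dimension $0$, i.e.\ a fixed point) and respects inclusions and ranks. Since this verification is packaged in Corollary~\ref{corPosetsOfTwins}, which we are entitled to invoke, there is no real obstacle and the argument is a one-line composition of known isomorphisms.

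As an alternative, one could instead re-run the stabilizer analysis of the proof of Proposition~\ref{propFacePoset} directly on $M'_{\Gamma,\lambda}$, using the description of $M'_{\Gamma,\lambda}$ in Construction~\ref{conTwinOfMatrixGen}: the stabilizers are again the subtori $T_{\ca{C}}$ indexed by clusterings $\ca{C}\in\Flats(\Gamma)$, and the connected components of $(M'_{\Gamma,\lambda})^{T_{\ca{C}}}$ containing a fixed point are again indexed by the assignments $\Sigma_{\ca{C}}\backslash\Sigma_{V_\Gamma}$. This would reprove the corollary without reference to twins, but the twin route above is shorter and is the one intended here.
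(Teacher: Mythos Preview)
Your proof is correct and follows exactly the paper's own argument: the paper simply states that Corollary~\ref{corPosetsOfTwins} implies the result, which is precisely your composition $S(M'_{\Gamma,\lambda})\cong S(M_{\Gamma,\lambda})\cong\Cl_\Gamma$ via Proposition~\ref{propTwinsMatrices}, Corollary~\ref{corPosetsOfTwins}, and Proposition~\ref{propFacePoset}. Your additional remarks on the orbit-stratification and the alternative direct approach are helpful elaborations but not needed for the argument.
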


\begin{prop}\label{propClusProperties}
The following holds for the cluster-permutohedron.
\begin{enumerate}
  \item For any element $x\in\Cl_\Gamma$ of rank $0$, the upper order ideal $(\Cl_\Gamma)_{\geqslant x}$ is isomorphic to $\Flats(\Gamma)$.
  \item $\Cl_\Gamma$ is a locally geometric poset.
  \item The 1-skeleton $(\Cl_\Gamma)_1$ is isomorphic to the Cayley graph $\Cayley_\Gamma$.
\end{enumerate}
\end{prop}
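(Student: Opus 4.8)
The plan is to reduce all three items to a single observation about upper order ideals of $\Cl_\Gamma$. Fix $s=(\ca{C}_0,\ca{A}_0)\in\Cl_\Gamma$, where $\ca{C}_0\in\ca{L}_\Gamma$ and $\ca{A}_0\in\Sigma_{\ca{C}_0}\backslash\Sigma_{V_\Gamma}$. First I would check that the forgetful map $(\ca{C},\ca{A})\mapsto\ca{C}$ restricts to an isomorphism of posets $(\Cl_\Gamma)_{\geqslant s}\xrightarrow{\ \sim\ }(\ca{L}_\Gamma)_{\geqslant\ca{C}_0}$. By the definition of the order on $\Cl_\Gamma$, an element $(\ca{C},\ca{A})$ lies above $s$ exactly when $\ca{C}\geqslant\ca{C}_0$ and $\ca{A}=\pr_{\ca{C}_0\leqslant\ca{C}}(\ca{A}_0)$; hence for each $\ca{C}\geqslant\ca{C}_0$ there is precisely one compatible assignment, so the map is a bijection. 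Monotonicity of the inverse uses the functoriality of the coset projections, $\pr_{\ca{C}'\leqslant\ca{C}}\circ\pr_{\ca{C}_0\leqslant\ca{C}'}=\pr_{\ca{C}_0\leqslant\ca{C}}$: if $\ca{C}_0\leqslant\ca{C}'\leqslant\ca{C}$, then the unique assignments over $\ca{C}'$ and over $\ca{C}$ are automatically compatible, i.e. $(\ca{C}',\ca{A}')\leqslant(\ca{C},\ca{A})$ in $\Cl_\Gamma$.

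Item (1) is the special case $\rk s=0$: then $\ca{C}_0=\hat{0}$, the group $\Sigma_{\ca{C}_0}$ is trivial, $x$ is just a bijection $V_\Gamma\to[n]$, and $(\ca{L}_\Gamma)_{\geqslant\hat{0}}=\ca{L}_\Gamma=\Flats(\Gamma)$ by Construction~\ref{conClusterPerm}. For item (2), I would first note that $\Cl_\Gamma$ has a top element: the clustering $\{V_\Gamma\}$ has $\Sigma_{\{V_\Gamma\}}=\Sigma_{V_\Gamma}$, hence a unique assignment, and the resulting pair dominates every element. By the isomorphism above, each upper order ideal $(\Cl_\Gamma)_{\geqslant s}$ is isomorphic to an upper interval $[\ca{C}_0,\hat{1}]$ of the geometric lattice $\ca{L}_\Gamma$, and such an interval is itself a geometric lattice (standard matroid theory; concretely $[\ca{C}_0,\hat{1}]\cong\Flats(\Gamma/\ca{C}_0)$ for the quotient multigraph $\Gamma/\ca{C}_0$, whose clusterings are exactly the clusterings of $\Gamma$ coarser than $\ca{C}_0$, each block $V_i$ of $\ca{C}_0$ being connected in $\Gamma$). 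Thus $\Cl_\Gamma$ is locally geometric in the sense of Definition~\ref{defLocGeomPoset}.

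For item (3) I would unwind the low-rank part of $\Cl_\Gamma$ directly. Rank-$0$ elements are pairs $(\hat{0},\ca{A})$ with $\ca{A}$ a bijection $V_\Gamma\to[n]$; a reference bijection identifies them with $\Sigma_n$. A rank-$1$ clustering has $n-1$ blocks, hence exactly one block of size two, say $\{u,v\}$, and that block must span an edge of $\Gamma$; so rank-$1$ clusterings correspond bijectively to $E_\Gamma$, and for such a clustering $\Sigma_{\ca{C}}=\langle(u\,v)\rangle$. A rank-$1$ element $(\ca{C},\ca{A})$ covers exactly the two rank-$0$ elements lying in the coset $\ca{A}$, and these two bijections $p,p'$ satisfy $p^{-1}p'=(u\,v)$; conversely, for every bijection $p$ and every edge $\{u,v\}\in E_\Gamma$ there is a unique rank-$1$ element covering both $p$ and $p\cdot(u\,v)$. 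Hence the graph $(\Cl_\Gamma)_1$ — vertices the rank-$0$ elements, edges the rank-$1$ elements with incidence given by covering — is exactly the Cayley graph of $\Sigma_n$ on the transpositions $\{(u\,v)\mid\{u,v\}\in E_\Gamma\}$, that is $\Cayley_\Gamma$. Alternatively, one notes that $\Cl_\Gamma$ and $\Gr_\Gamma$ have the same elements and order relations in ranks $\leqslant 1$ (since $\ca{L}_\Gamma$ and $2^{E_\Gamma}$ agree up to rank $1$) and then invokes Proposition~\ref{propGrAndCayley}.

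The only point requiring care is bookkeeping of which side $\Sigma_n$ acts on when passing between assignments and group elements — this is why Construction~\ref{conClusterPerm} insists on left cosets — but for ranks $\leqslant 1$ and for the isomorphism of upper order ideals the side is immaterial up to the canonical anti-isomorphism $g\mapsto g^{-1}$ of $\Sigma_n$, which carries $\Cayley_\Gamma$ onto itself. The one substantive external input is the matroid fact used in item (2) that intervals of geometric lattices are geometric; everything else is a direct unravelling of the definitions.
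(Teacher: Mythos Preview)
Your proof is correct. For items (1) and (2) it is essentially the paper's argument, though you are more careful: the paper simply says ``item 2 follows from item 1 by the definition of locally geometric poset,'' leaving implicit the fact that an upper interval $[\ca{C}_0,\hat{1}]$ of a geometric lattice is again geometric. You make this explicit and, helpfully, prove the stronger isomorphism $(\Cl_\Gamma)_{\geqslant s}\cong(\ca{L}_\Gamma)_{\geqslant\ca{C}_0}$ for arbitrary $s$ first, so that item (2) does not require passing through a minimal element below $s$.

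For item (3) the routes genuinely differ. The paper does not unwind the combinatorics of rank-$1$ clusterings; instead it imports the identification $S(M_{\Gamma,\lambda})\cong\Cl_\Gamma$ from Proposition~\ref{propFacePoset} and observes, via Lemma~\ref{lemConnectivity} and its proof, that the rank-$1$ face submanifolds are precisely the $T$-invariant $2$-spheres $\mathbb{S}_{\sigma\tau}$ joining fixed points $A_\sigma$ and $A_\tau$ with $\sigma=\tau\cdot(i,j)$, $\{i,j\}\in E_\Gamma$. Your argument is purely combinatorial: rank-$1$ clusterings are exactly the edges of $\Gamma$, the stabilizer $\Sigma_{\ca{C}}$ is the order-$2$ group generated by the corresponding transposition, and the two rank-$0$ elements below differ by that transposition. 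This buys independence from the manifold (so the proposition stands on its own as a statement about posets), and it also makes the alternative one-line reduction to Proposition~\ref{propGrAndCayley} via $(\Cl_\Gamma)_1\cong(\Gr_\Gamma)_1$ transparent. The paper's route, on the other hand, reinforces the geometric picture that is used again in later sections and avoids re-deriving what is already contained in the proof of Proposition~\ref{propFacePoset}.
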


\begin{proof}
All items follow directly from the above arguments. Item 1 is the consequence of Definition~\ref{defClusterPerm}. Since the maps $\pr_{\ca{C}'\leq \ca{C}}$ are surjective, the assignment $\ca{A}$ for a minimal element $x=(\ca{C},\ca{A})$ uniquely determines all assignments of larger elements. Hence $(\Cl_\Gamma)_{\geqslant x}\cong \Flats(\Gamma)_{\geqslant\hat{0}}=\Flats(\Gamma)$. Item 2 follows from item 1 by the definition of locally geometric poset.

We already noticed in the proof of Lemma~\ref{lemConnectivity} that, whenever $\sigma=\tau\cdot(i,j)$ and $\{i,j\}\in E_\Gamma$, the fixed points (=the minimal faces) $A_\sigma$ and $A_\tau$ are connected by an $T$-invariant $2$-sphere, let us denote it $\mathbb{S}_{\sigma\tau}$, and these spheres correspond to the edges of $\Cayley_\Gamma$. From a general description of all faces given in the proof of Proposition~\ref{propFacePoset}, it follows that each $\mathbb{S}_{\sigma\tau}$, as an element of face poset, has rank $1$, and any other face contains some $\mathbb{S}_{\sigma\tau}$ as a proper subset. Therefore, the 1-skeleton $(\Cl_\Gamma)_1$ is exactly the union of $n!$ fixed points, and the spheres $\mathbb{S}_{\sigma\tau}$. Hence it is isomorphic to (the face poset of) $\Cayley_\Gamma$.
\end{proof}

Now we switch to the combinatorics, and describe the relation between cluster-per\-mu\-to\-hedron $\Cl_\Gamma$ and graphicahedron $\Gr_\Gamma$.

\begin{prop}\label{propGalois}
For any graph $\Gamma$, there exists a Galois insertion $\Cl_\Gamma\hookrightarrow \Gr_\Gamma$ preserving skeleta. If $n>1$ and $j<\rk\Flats(\Gamma)=n-1$, the skeleton $(\Cl_\Gamma)_j$ is the core of $(\Gr_\Gamma)_j$.
\end{prop}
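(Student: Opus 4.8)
The plan is to lift the Galois insertion between the Boolean lattice of edge subsets and the geometric lattice of the graphical matroid (Example~\ref{exGaloisInsertGeomLattice} applied to $\beta(\Gamma)$, together with the identification $\Flats(\beta(\Gamma))=\Flats(\Gamma)$ from Construction~\ref{conGeomLatticeOfGraph}) to a Galois insertion $\Pi\colon\Gr_\Gamma\rightleftarrows\Cl_\Gamma\colon I$, the point being that the assignment data simply rides along. Recall that on the matroid side $\pi\colon 2^{E_\Gamma}\to\Flats(\Gamma)$ sends $W$ to $\ca{C}(W)$, while $\iota\colon\Flats(\Gamma)\hookrightarrow 2^{E_\Gamma}$ sends a clustering $\ca{C}$ to the set $\iota(\ca{C})$ of those edges of $\Gamma$ both of whose endpoints lie in one cluster of $\ca{C}$. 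The two facts that drive the argument are $\ca{C}(\iota(\ca{C}))=\ca{C}$ --- which uses precisely that each cluster of a clustering induces a connected subgraph --- and $W\subseteq\iota(\ca{C}(W))$ for every $W\subseteq E_\Gamma$.

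Concretely I would set $I(\ca{C},\ca{A})=(\iota(\ca{C}),\ca{A})$ and $\Pi(W,\ca{A})=(\ca{C}(W),\ca{A})$, where the left cosets $\Sigma_{\ca{C}}\backslash\Sigma_{V_\Gamma}$ appearing in $\Cl_\Gamma$ are identified with the right cosets $\Sigma_{V_\Gamma}/\Sigma_{\ca{C}}$ appearing in $\Gr_\Gamma$ via $g\mapsto g^{-1}$; this identification intertwines all the projections $\pr_{\ca{C}'\leq\ca{C}}$, so the assignment datum transports without change (and it is indeed an assignment for the target clustering because $\ca{C}(\iota(\ca{C}))=\ca{C}$ and $\ca{C}(W)\in\Flats(\Gamma)$ always). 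Both maps are monotone; $\Pi\circ I=\id_{\Cl_\Gamma}$ holds by $\ca{C}(\iota(\ca{C}))=\ca{C}$, and $(W,\ca{A})\leq I(\Pi(W,\ca{A}))$ holds by $W\subseteq\iota(\ca{C}(W))$ with the relevant projection being the identity; so by the criterion of Construction~\ref{conClosureOperator}, $\Pi\colon\Gr_\Gamma\rightleftarrows\Cl_\Gamma\colon I$ is a Galois insertion. Since $\conn((W,\ca{A}))=\rk\ca{C}(W)$ and $\rk((\ca{C},\ca{A}))=\rk\ca{C}$, the same identity $\ca{C}(\iota(\ca{C}))=\ca{C}$ gives $\conn(I(s))=\rk(s)$ and $\rk(\Pi(s))=\conn(s)$, so $I$ and $\Pi$ restrict to maps $I\colon(\Cl_\Gamma)_j\to(\Gr_\Gamma)_j$ and $\Pi\colon(\Gr_\Gamma)_j\to(\Cl_\Gamma)_j$; since the defining identities of a Galois insertion are inherited by restriction, $\Pi\colon(\Gr_\Gamma)_j\rightleftarrows(\Cl_\Gamma)_j\colon I$ is again a Galois insertion, for every $j$.

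By Lemma~\ref{lemCoreGaloisInsertion} it then suffices to show that $(\Cl_\Gamma)_j$ is a core whenever $j<\rk\Flats(\Gamma)=n-1$, i.e.\ $j\leq n-2$. Here I would use that the elements covering $(\ca{C},\ca{A})$ in $\Cl_\Gamma$ biject with the clusterings covering $\ca{C}$ in $\Flats(\Gamma)$, hence with the edges of the simple graph underlying the contraction $\Gamma/\ca{C}$; as $\Gamma$ is connected, so is $\Gamma/\ca{C}$, and therefore a clustering with at least three clusters has at least two covers. In $(\Cl_\Gamma)_j$ every element of rank $<j$ has at least three clusters (because $j\leq n-2$), hence at least two covers, all still of rank $\leq j$; every element of rank $j$ is maximal; so there are no upbeats. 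Dually, the elements covered by $(\ca{C},\ca{A})$ of rank $\geq 1$ are obtained by splitting some cluster $V_i$ with $|V_i|\geq 2$ into two connected pieces $V_i',V_i''$ and distributing the eigenvalues, and one such split already admits $\binom{|V_i|}{|V_i'|}\geq 2$ assignments; so every element of rank $\geq 1$ covers at least two elements, and there are no downbeats. Thus $(\Cl_\Gamma)_j$ is a core, and Lemma~\ref{lemCoreGaloisInsertion} identifies it with the core of $(\Gr_\Gamma)_j$.

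I expect the only genuine work to be bookkeeping: pinning down the left/right coset identification and checking that inversion intertwines every projection used in the two partial orders, and then making the core computation airtight --- in particular the passage through $\Gamma/\ca{C}$ (a connected graph on at least three vertices has at least two edges) and the count of assignments over a split cluster. The restriction $j<n-1$ is essential and not an artifact: $(\Cl_\Gamma)_{n-1}=\Cl_\Gamma$ always has upbeats, namely the rank-$(n-2)$ elements, each of which is covered only by $\hat{1}$. Conceptually the statement is the exact parallel of Example~\ref{exGaloisInsertGeomLattice} and Lemma~\ref{lemGeomLatticeIsCore}: the graphicahedron is the Boolean-lattice model, the cluster-permutohedron is its lattice-of-flats refinement, and the assignments do not interfere.
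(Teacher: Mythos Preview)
Your proof is correct and follows essentially the same route as the paper's. The construction of the Galois insertion is identical --- you even handle the left/right coset reconciliation via $g\mapsto g^{-1}$, a bookkeeping point the paper glosses over. For the core statement the packaging differs slightly: the paper observes that for each minimal $x$ the local poset $(\Cl_\Gamma)_{\geqslant x}\cap(\Cl_\Gamma)_j\cong\Flats(\Gamma)_j$ is a proper skeleton of a geometric lattice, invokes Lemma~\ref{lemGeomLatticeIsCore} to conclude that the only possible beats in that local skeleton are atoms, and then checks that atoms (edges of $\Cayley_\Gamma$) cover two minimal elements globally, hence are not downbeats. Your argument replaces this appeal to the abstract geometric-lattice lemma with a direct count: covers of $(\ca{C},\ca{A})$ biject with edges of the connected graph $\Gamma/\ca{C}$ (at least two once there are $\geq 3$ clusters), and covered elements are at least $\binom{|V_i|}{|V_i'|}\geq 2$ from any single split. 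The two arguments are equivalent in strength and length; yours is a shade more concrete, the paper's a shade more structural.
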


\begin{proof}
Recall that there is a Galois insertion of $\Flats(\Gamma)$ to $2^{[E_\Gamma]}$, see Example~\ref{exGaloisInsertGeomLattice}. The posets $\Cl_\Gamma$ and $\Gr_\Gamma$ are obtained from $\Flats(\Gamma)$ and $2^{[E_\Gamma]}$ respectively by attaching assignments as additional piece of information. This gives a hint on how to build a Galois connection.

We construct two maps $\iota\colon \Cl_\Gamma\rightleftarrows\Gr_\Gamma\colon \pi$. Let $(\ca{C},\ca{A})\in \Cl_\Gamma$ where $\ca{C}=\{V_1,\ldots,V_k\}$ is a clustering, and $\ca{A}$ an assignment for this clustering. Then we set
\[
\iota((\ca{C},\ca{A}))=(E_{\ca{C}},\ca{A}),
\]
where $E_{\ca{C}}$ is the union of edges of the induced subgraphs $\Gamma_{V_1},\ldots,\Gamma_{V_k}$. Conversely, if $(W,\ca{A})\in \Gr_\Gamma$, with $W\in 2^{E_\Gamma}$, then we set $\pi((W,\ca{A}))=(\ca{C}(W),\ca{A})$, where $\ca{C}$ is the clustering corresponding to $W$, see Construction~\ref{conGraphicahedron}. It is easily seen that $\iota$ and $\pi$ determine a Galois insertion. Both maps preserve skeleta (see Construction~\ref{conSkeleta}), since
\[
\conn(\iota(s))=\rk s\mbox{ and }\rk\pi(t)=\conn t
\]
for any $s\in\Cl_\Gamma$ and $t\in \Gr_\Gamma$.

Let us prove the second part of the statement. For $n=1$, the statement is trivially satisfied. Let us assume $n>1$ which guarantees both $\Cl_\Gamma\setminus\{\hat{1}\}$ and $\Gr_\Gamma\setminus\{\hat{1}\}$ are nonempty. We already proved that there is a Galois insertion of $(\Cl_\Gamma)_j$ in $(\Gr_\Gamma)_j$. In view of Lemma~\ref{lemCoreGaloisInsertion}, it only remains to prove that $(\Cl_\Gamma)_j$ is a core. Proposition~\ref{propClusProperties} implies that, for any minimal element $x\in\Cl_\Gamma$, the subset $\{s\mid s\geqslant x, \rk s\leqslant j\}\subset(\Cl_\Gamma)_j$ is isomorphic to the skeleton $\Flats(\Gamma)_j$ of the geometric lattice. Since $j<n-1$, this is a proper skeleton, so every beat contained in it is an atom; this follows from Remark~\ref{remBeatsInLattice} and Lemma~\ref{lemGeomLatticeIsCore}. Therefore, to check that $(\Cl_\Gamma)_j$ is a core, one only needs to check that its atoms are not downbeats. However each atom is an edge of the Cayley graph $\Cayley_\Gamma$, see Proposition~\ref{propClusProperties}, so it covers exactly 2 minimal elements in $\Cl_\Gamma$. This completes the proof.
\end{proof}

Now we prove the last 2 statements of Theorem~\ref{thmMainAll} regarding the girth $g$ of $\Gamma$.

\begin{prop}\label{propGirthCoincidence}
The skeleta $(\Cl_\Gamma)_{g-2}$ and $(\Gr_\Gamma)_{g-2}$ are isomorphic. In particular, when $\Gamma$ is a tree there is an isomorphism $\Cl_\Gamma\cong\Gr_\Gamma$.
\end{prop}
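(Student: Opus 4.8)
The plan is to deduce everything from the skeleton-preserving Galois insertion $\iota\colon\Cl_\Gamma\rightleftarrows\Gr_\Gamma\colon\pi$ produced in Proposition~\ref{propGalois}, where $\iota((\ca{C},\ca{A}))=(E_{\ca{C}},\ca{A})$ and $\pi((W,\ca{A}))=(\ca{C}(W),\ca{A})$, together with its closure operator $\clos=\iota\circ\pi$ on $\Gr_\Gamma$, which acts by $\clos((W,\ca{A}))=(E_{\ca{C}(W)},\ca{A})$. An element $(W,\ca{A})$ is \emph{closed}, i.e.\ lies in the image of $\iota$, exactly when $W=E_{\ca{C}(W)}$: that is, when $W$ already contains every edge of $\Gamma$ both of whose endpoints lie in one connected component of $([n],W)$. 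Since $\conn(\iota(s))=\rk s$ and $\rk\pi(t)=\conn t$, the pair $(\iota,\pi)$ restricts, for each $j$, to a Galois insertion of $(\Cl_\Gamma)_j$ into $(\Gr_\Gamma)_j$; because $\pi\circ\iota=\id$, this restriction is an order isomorphism precisely when $\iota$ is onto, equivalently when every element of $(\Gr_\Gamma)_j$ is closed. So the whole statement reduces to the combinatorial claim: if $(W,\ca{A})\in\Gr_\Gamma$ and $\conn((W,\ca{A}))\leqslant g-2$, then $W=E_{\ca{C}(W)}$.

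I would prove this claim by contradiction. Suppose some edge $e=\{a,b\}\in E_\Gamma\setminus W$ has its two endpoints in a single connected component $V_i$ of $([n],W)$. Then there is a simple $W$-path from $a$ to $b$ lying inside $V_i$, and since $e\notin W$ this path is not the edge $\{a,b\}$, hence has length $\geqslant 2$; appending $e$ produces a simple cycle of $\Gamma$ of length $\geqslant 3$, all of whose vertices lie in $V_i$. By the definition of girth this cycle has length $\geqslant g$, so it has at least $g$ distinct vertices and $|V_i|\geqslant g$. Writing $V_1,\dots,V_k$ for the components of $([n],W)$, this gives
\[
\conn((W,\ca{A}))=n-k=\sum_{l=1}^{k}(|V_l|-1)\geqslant |V_i|-1\geqslant g-1,
\]
contradicting $\conn((W,\ca{A}))\leqslant g-2$. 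Hence every element of $(\Gr_\Gamma)_{g-2}$ is closed, and $(\Cl_\Gamma)_{g-2}\cong(\Gr_\Gamma)_{g-2}$.

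For the tree case, $\Gamma$ has no cycles, so $g=+\infty$ and the hypothesis $\conn\leqslant g-2$ is vacuous; equivalently, in a tree the unique path between two vertices of a common $W$-component is already a $W$-path, so every $W\subseteq E_\Gamma$ is closed. Since $\rk\Cl_\Gamma=n-1$ is also the maximal value of $\conn$ on $\Gr_\Gamma$ (attained at $W=E_\Gamma$), the restricted maps $\iota,\pi$ are mutually inverse isomorphisms $\Cl_\Gamma\cong\Gr_\Gamma$. I do not expect a real obstacle here: the single genuinely new point is the observation that an edge outside $W$ but internal to a $W$-component forces a $\Gamma$-cycle whose length is bounded below by the girth and whose vertices are confined to one component, so that the component has size at least $g$ and thus $\conn\geqslant g-1$; the rest is formal bookkeeping with the Galois insertion of Proposition~\ref{propGalois} and the fact that a skeleton-preserving Galois insertion restricts to an isomorphism exactly where it is surjective.
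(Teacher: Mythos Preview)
Your proof is correct and follows essentially the same idea as the paper's: both rest on the observation that when $\conn W\leqslant g-2$ each connected component of $([n],W)$ has at most $g-1$ vertices and hence is a tree inside $\Gamma$, so that $W=E_{\ca C(W)}$. The paper states this directly (via a pigeonhole remark on component sizes) while you argue the contrapositive by producing a $\Gamma$-cycle through an ``unclosed'' edge; you also make the role of the Galois insertion from Proposition~\ref{propGalois} more explicit, but the substance is the same.
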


\begin{proof}
This a simple consequence of the definition of two posets: roughly speaking there is no difference between $2^{[E_\Gamma]}$ and $\Flats(\Gamma)$ below girth. The case of tree is obvious: if $\Gamma$ is a tree, all elements of the linear matroid $\beta(\Gamma)$ are linearly independent, so every subset of them forms a flat, and the statement is proved.

Now let us prove the statement for a general graph. Let $W\in 2^{E_\Gamma}$ and, similarly to Construction~\ref{conSkeleta}, let $\conn(W)$ equal $n$ minus the number of connected components of $([n],W)$. If $\conn W\leqslant g-2$, then pigeonhole principle implies that each connected component of $([n],W)$ contains at most $g-1$ edges. Since $g$ is the girth of $\Gamma$, each component of $([n],W)$ is a tree, and the argument of the first paragraph applies.
\end{proof}

\begin{prop}\label{propIndependency}
The torus action on $M_{\Gamma,\lambda}$ is $(g-1)$-independent.
\end{prop}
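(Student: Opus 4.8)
The plan is to reduce the statement to a classical fact about graphical matroids. By Proposition~\ref{propTangRepMatrices}, at every fixed point $A_\sigma$ the collection of tangent weights of $M_{\Gamma,\lambda}$ is $\{\epsilon_i-\epsilon_j\mid\{i,j\}\in E_\Gamma\}$, and this collection does not depend on $\sigma$. Since linear independence of a set of vectors is unaffected by the sign ambiguity in the choice of weights (Construction~\ref{conTangentWeights}), $(g-1)$-independence of the action is exactly the assertion that every subcollection of $\{\epsilon_i-\epsilon_j\mid\{i,j\}\in E_\Gamma\}$ of cardinality at most $g-1$ is linearly independent over $\Qo$; equivalently, the graphical matroid $\beta(\Gamma)$ of Construction~\ref{conGeomLatticeOfGraph} has no circuit of size less than $g$.

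Next I would argue as follows. Fix $W\subseteq E_\Gamma$ with $|W|\leqslant g-1$. The subgraph $([n],W)$ contains no cycle: any cycle of $\Gamma$ has length at least the girth $g$, which exceeds $|W|$. Hence $([n],W)$ is a forest. It then remains to recall the standard fact that the vectors $\{e_i-e_j\mid\{i,j\}\in W\}$ attached to the edges of a forest are linearly independent. The quickest verification is the leaf argument: a nonempty forest has a leaf $v$, incident to a unique edge $\{v,w\}\in W$, and in any relation $\sum_{\{i,j\}\in W}c_{ij}(e_i-e_j)=0$ the $e_v$-coordinate equals $\pm c_{vw}$, forcing $c_{vw}=0$; deleting the edge $\{v,w\}$ and iterating exhausts $W$. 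Equivalently, one invokes the rank formula $\rk_{\beta(\Gamma)}W = n - \#\{\text{connected components of }([n],W)\}$, which equals $|W|$ precisely when $([n],W)$ is a forest.

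When $\Gamma$ is a tree, $g$ is taken to be $+\infty$, so the statement becomes that \emph{every} subcollection of the tangent weights is linearly independent, which is the same computation applied to all of $W\subseteq E_\Gamma$; this is consistent with Construction~\ref{conGeomLatticeOfGraph}, where it is noted that for a tree every subset of $\beta(\Gamma)$ is a flat.

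I do not expect a genuine obstacle here: the entire content is the classical identification of the circuits of a graphical matroid with the cycles of the graph, together with the fact, already recorded in Proposition~\ref{propTangRepMatrices}, that the tangent weights of this torus action are exactly the vectors of $\beta(\Gamma)$. The only point that deserves an explicit sentence is that the sign indeterminacy of the weights is irrelevant to linear independence.
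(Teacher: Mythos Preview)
Your proof is correct and follows essentially the same route as the paper: invoke Proposition~\ref{propTangRepMatrices} to identify the tangent weights with the vectors of the graphical matroid $\beta(\Gamma)$, and then use that a linear dependence among these vectors forces a cycle in the corresponding subgraph, contradicting the definition of girth. The paper's proof is terser (it simply asserts the equivalence between linear dependence and the existence of a cycle), whereas you spell out the forest/leaf argument explicitly, but the content is the same.
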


\begin{proof}
According to Proposition~\ref{propTangRepMatrices}, the weights of the tangent representation to $M_{\Gamma,\lambda}$ at any fixed point are given by the linear matroid $\beta(\Gamma)$. Assume on the contrary that there are $j\leqslant g-1$ linearly dependent vectors. Linear dependency means that the corresponding subgraph has a cycle of length $j$, which contradicts to the definition of girth~$g$.
\end{proof}

Finally, we provide one more statement, which describes the faces of $M_{\Gamma,\lambda}$ and lower ideals of $\Cl_\Gamma$.

\begin{prop}\label{propStructureOfFaces}
Let $s=(\ca{C},\ca{A})$ be an element of $\Cl_\Gamma$ with a clustering $\ca{C}=\{V_1,\ldots,V_k\}$ and an assignment $\ca{A}$ given as a class of $p\colon V_\Gamma\to [n]$.
%=[p]\in \Sigma_V/\Sigma_{\ca{C}}.
\begin{enumerate}
  \item There is an isomorphism
  \[
  (\Cl_\Gamma)_{\leqslant s}\cong \Cl_{\Gamma|_{V_1}}\times\cdots\times \Cl_{\Gamma|_{V_k}}
  \]
  \item Let $X_s$ be a face submanifold of $M_{\Gamma,\lambda}$ corresponding to $s$. Then $X_s$ is diffeomorphic to the unique connected component of the manifold
  \[
  X_s\cong M_{\Gamma|_{V_1},\lambda_{p(V_1)}}\times\cdots\times M_{\Gamma|_{V_k},\lambda_{p(V_k)}}
  \]
  containing a diagonal matrix.
\end{enumerate}
\end{prop}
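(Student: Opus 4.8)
The plan is to prove the two items by independent arguments—item (1) purely combinatorial, item (2) geometric—and then to note (lightly) that they are consistent, since $S(X_s)\cong S(M_{\Gamma,\lambda})_{\leqslant s}=(\Cl_\Gamma)_{\leqslant s}$ and face posets multiply over products. The reformulation that makes item (1) transparent is the following reading of an assignment: for a clustering $\ca{D}=\{W_1,\ldots,W_m\}$ of a vertex set $W$, the defining relation $p_1^{-1}\circ p_2\in\Sigma_{\ca{D}}$ says that the $\ca{D}$-equivalence class of a bijection $p\colon W\to[|W|]$ is determined by, and freely determines, the ordered tuple of subsets $(p(W_1),\ldots,p(W_m))$ partitioning $[|W|]$ with $|p(W_i)|=|W_i|$; under this dictionary the projection $\pr_{\ca{D}'\leqslant\ca{D}}$ becomes ``merge the subsets whose clusters got merged.''

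For item (1): a clustering $\ca{C}'\leqslant\ca{C}=\{V_1,\ldots,V_k\}$ is the same datum as a tuple $(\ca{C}_1',\ldots,\ca{C}_k')$ with $\ca{C}_i'\in\ca{L}_{\Gamma|_{V_i}}$, because a part of $\ca{C}'$ contained in $V_i$ induces a connected subgraph of $\Gamma$ if and only if it induces a connected subgraph of $\Gamma|_{V_i}$. Fixing a representative $p$ of $\ca{A}$, writing $S_i=p(V_i)$ and letting $o_i\colon S_i\to[|V_i|]$ be the order-preserving bijection, I would send $(\ca{C}',\ca{A}')\leqslant s$, with $\ca{A}'$ viewed as the ordered family $(S_{V'})_{V'\in\ca{C}'}$, to the tuple whose $i$-th entry is $\bigl(\ca{C}_i',(o_i(S_{V'}))_{V'\in\ca{C}_i'}\bigr)\in\Cl_{\Gamma|_{V_i}}$; the condition $\pr_{\ca{C}'\leqslant\ca{C}}(\ca{A}')=\ca{A}$ is exactly $\bigcup_{V'\in\ca{C}_i'}S_{V'}=S_i$ for each $i$, which is precisely what makes each entry a well-defined element of $\Cl_{\Gamma|_{V_i}}$. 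The inverse reassembles a tuple of clusterings-with-assignments by taking the union of the clusterings and translating targets back into $[n]$ through the $o_i$. Order-preservation in both directions is immediate from the dictionary above, and $\rk\ca{C}'=\sum_i\rk\ca{C}_i'$ shows the isomorphism is even grading-compatible.

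For item (2): by the coordinate form~\eqref{eqActionCoord}, exactly as in the proof of Proposition~\ref{propFacePoset}, a matrix of $M_{\Gamma,\lambda}$ is fixed by $T_{\ca{C}}=\prod_i\Delta_{V_i}(T^1)$ iff it is block-diagonal with $\Gamma|_{V_i}$-shaped blocks; identifying block-diagonal $\Gamma$-shaped matrices with $\prod_i M_{\Gamma|_{V_i}}$ and imposing isospectrality gives
\[
M_{\Gamma,\lambda}^{T_{\ca{C}}}\;\cong\;\bigsqcup_{(\mu_1,\ldots,\mu_k)}\; M_{\Gamma|_{V_1},\mu_1}\times\cdots\times M_{\Gamma|_{V_k},\mu_k},
\]
the union over ordered distributions of the spectrum $\lambda$ into subsets of sizes $|V_1|,\ldots,|V_k|$; each factor is a smooth manifold, being closed-and-open in the smooth fixed-point manifold, with pairwise distinct spectrum inherited from $\lambda$. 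Since $X_s$ is, by construction, the connected component of $M_{\Gamma,\lambda}^{T_{\ca{C}}}$ through $A_p=\diag(\lambda_{p(1)},\ldots,\lambda_{p(n)})$, and the $V_i$-block of $A_p$ has spectrum $\lambda_{p(V_i)}$, we get that $X_s$ is the component of $M_{\Gamma|_{V_1},\lambda_{p(V_1)}}\times\cdots\times M_{\Gamma|_{V_k},\lambda_{p(V_k)}}$ through $A_p$. The one point needing care is the following: because $\ca{C}$ is a clustering, every $\Gamma|_{V_i}$ is connected, so Lemma~\ref{lemConnectivity} applies to each factor and shows that all diagonal (fixed-point) matrices of $M_{\Gamma|_{V_i},\lambda_{p(V_i)}}$—hence all products of diagonal matrices in the product—lie in a single connected component; this is exactly what makes ``the connected component containing a diagonal matrix'' unambiguous, and that component is $X_s$. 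The block-diagonal identification restricts to the asserted diffeomorphism, automatically equivariant for the product torus $\prod_i T^{V_i}=T^n$.

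I expect the genuine friction to sit in two spots: pinning down the assignment $\leftrightarrow$ ordered-set-partition dictionary and its compatibility with the maps $\pr_{\ca{C}'\leqslant\ca{C}}$ in item (1) (fiddly but routine), and, in item (2), the appeal to Lemma~\ref{lemConnectivity}, which is precisely what rescues the statement from the connectivity subtleties flagged in Remark~\ref{remConnectivityIssue}; everything else is bookkeeping already carried out in the proof of Proposition~\ref{propFacePoset}.
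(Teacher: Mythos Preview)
Your proposal is correct and is precisely a fleshing out of what the paper leaves implicit: the paper's entire proof is the single sentence ``The proof follows the same lines as the proof of Proposition~\ref{propFacePoset},'' and your item~(2) does exactly that (block-diagonal description of $M_{\Gamma,\lambda}^{T_{\ca{C}}}$, distribution of eigenvalues among blocks, appeal to Lemma~\ref{lemConnectivity} for the uniqueness of the diagonal-containing component), while your item~(1) supplies the straightforward combinatorial bookkeeping the paper does not bother to spell out. There is nothing genuinely different in approach---you have simply written the proof the paper omits.
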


The proof follows the same lines as the proof of Proposition~\ref{propFacePoset}.

\begin{rem}
In the case of ordinary permutohedron (path graph $\I_n$), Proposition~\ref{propFacePoset} is classical: it asserts that all faces of a permutohedron are combinatorially equivalent to the products of smaller permutohedra. This observation was used by the second author in~\cite{Buchstaber:2008} to obtain differential equations on a generating series of permutohedra in the ring of polytopes. Some other series of graphs were studied in~\cite{Buchstaber:2008}, but the focus of that paper was made on graph-associahedra, which are convex polyhedra. These are different from cluster-permutohedra (with the permutohedra being the only exception). However, a similar technique of generating series can be applied to study combinatorics of cluster-permutohedra. This is the subject of a different paper.
\end{rem}

\section{Results on graphicahedra are applied in toric topology}\label{secNonFormality}

We recall two particular examples of graphicahedra/cluster-permutohedra corresponding to the star-graph $\St_3$ and to cycle graphs $\Cy_n$. They were studied in~\cite{Rio-Francos_Hubard_Oliveros_Schulte:2012} and independently in~\cite{Ayzenberg_Buchstaber_MS:2021} and~\cite{Ayzenberg:2020}. In the case of $\Cy_n$, the related construction was independently introduced by Panina~\cite{Panina:2015} under the name \emph{cyclopermutohedron}. So far, the topology of graphicahedra in these two cases is well-studied. The aim of this section is to prove that the known results about these posets imply non-equivariant formality of the corresponding isospectral matrix manifolds $M_{\St_3,\lambda}$ and $M_{\Cy_n,\lambda}$ for $n\geqslant 4$.

\begin{ex}
Let $\St_3=K_{1,3}$ be the star graph with 3 rays. Since $\St_3$ is a tree, Proposition~\ref{propGirthCoincidence} implies the isomorphism of posets $\Cl_{\St_3}\cong \Gr_{\St_3}$. These posets have rank $3$, this coincides with the dimension of effectively acting torus $T^3$ on a $6$-dimensional manifold $M_{\St_3,\lambda}$. The torus action has complexity zero, in accordance with Remark~\ref{remComplexity}. Proposition~\ref{propClusProperties} implies that for each minimal element $x\in\Cl_{\St_3}$, the poset $(\Cl_{\St_3})_{\geqslant x}$ is a boolean lattice of rank $3$. In particular, reversing the order in $\Cl_{\St_3}$ we get a simplicial poset of dimension 2 (actually it is even a balanced simplicial complex).
\end{ex}

\begin{prop}[\cite{Rio-Francos_Hubard_Oliveros_Schulte:2012} and~\cite{Ayzenberg_Buchstaber_MS:2021}]\label{propStarProps}
The poset $\Cl_{\St_3}\setminus\{\hat{1}\}$ is isomorphic to the face poset of a certain regular cell subdivision of $T^2$ into hexagons.
\end{prop}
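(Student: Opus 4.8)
Write $\St_3=K_{1,3}$ with centre vertex $4$ and leaves $1,2,3$, so that $E_{\St_3}=\{\{1,4\},\{2,4\},\{3,4\}\}$; since $\St_3$ is a tree, Proposition~\ref{propGirthCoincidence} gives $\Cl_{\St_3}\cong\Gr_{\St_3}$, hence it suffices to analyse $\Cl_{\St_3}$. The plan is to realise $\Cl_{\St_3}\setminus\{\hat1\}$ as the face poset of an explicit regular CW complex $K$ built from hexagons, then to prove that $K$ is a connected closed surface, and finally to identify $K$ with $T^2$ via $\chi(K)=0$ together with orientability.

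First I would enumerate $\Cl_{\St_3}$. Apart from $\hat0$ and $\hat1$, the clusterings of $\St_3$ are the three one-edge clusterings (rank $1$) and the three clusterings $\{\{i,4,j\},\{k\}\}$, $\{i,j,k\}=\{1,2,3\}$, of rank $2$; there is no clustering $\{\{i,j\},\{4\},\{k\}\}$, since two leaves do not span a connected block. Counting cosets, $\Cl_{\St_3}\setminus\{\hat1\}$ has $24$ elements of rank $0$ (all bijections $V_{\St_3}\to[4]$), $3\cdot(24/2)=36$ of rank $1$, and $3\cdot(24/6)=12$ of rank $2$. By Proposition~\ref{propClusProperties} every rank-$1$ element covers exactly two rank-$0$ elements, these pairs being the edges of $\Cayley_{\St_3}$. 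For a rank-$2$ element $s$ with clustering $\{\{i,4,j\},\{k\}\}$, Proposition~\ref{propStructureOfFaces}(1) gives $(\Cl_{\St_3})_{\leqslant s}\cong\Cl_{\St_3|_{\{i,4,j\}}}\times\Cl_{\St_3|_{\{k\}}}\cong\Cl_{\I_3}$, the face poset of the permutohedron $\Pe^2$, i.e. of a hexagon. Hence I would build $K$ with one $0$-cell per rank-$0$ element, one $1$-cell per rank-$1$ element attached along its two endpoints, and one hexagonal $2$-cell per rank-$2$ element $s$ attached along the hexagonal circle $|(\Cl_{\St_3})_{<s}|$; the attaching maps are homeomorphisms onto subcomplexes, so $K$ is a regular CW complex whose face poset is $\Cl_{\St_3}\setminus\{\hat1\}$.

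Next I would check that $K$ is a closed surface by computing links. A rank-$1$ element with underlying edge $\{i,4\}$ is below exactly the two rank-$2$ clusterings $\{\{i,4,j\},\{k\}\}$ and $\{\{i,4,k\},\{j\}\}$ (the assignments being forced by the projection maps), so each $1$-cell of $K$ lies in exactly two $2$-cells: its link is $S^0$. A rank-$0$ element $p$ is below exactly three rank-$1$ elements — those with underlying edges $(1,4),(2,4),(3,4)$ — and three rank-$2$ elements $H_1,H_2,H_3$, where $H_a$ (omitting leaf $a$) lies above precisely the two rank-$1$ elements with underlying edges other than $(a,4)$; tracing these incidences, the link of $p$ in $K$ is the single $6$-cycle $H_1,(2,4),H_3,(1,4),H_2,(3,4)$ (alternating $2$-cells and $1$-cells), hence $S^1$. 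Thus $K$ is a closed surface, and it is connected because its $1$-skeleton is $\Cayley_{\St_3}$, which is connected (Proposition~\ref{propGrAndCayley}, $\St_3$ being connected). Finally $\chi(K)=24-36+12=0$, so $K$ is either the torus or the Klein bottle.

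The main obstacle is the last step, excluding the Klein bottle. One route is a finite combinatorial check: fix a cyclic orientation on each of the twelve hexagons $\Cl_{\I_3}$ and verify that two hexagons sharing a $1$-cell induce opposite orientations on it. This is exactly the assertion of \cite{Rio-Francos_Hubard_Oliveros_Schulte:2012} that $\Gr_{\St_3}\setminus\{\hat1\}$ — which equals $\Cl_{\St_3}\setminus\{\hat1\}$ by Proposition~\ref{propGirthCoincidence} — is the quotient of the hexagonal tiling of $\Ro^2$ by a lattice of \emph{translations} (as opposed to a group containing glide reflections), cf. Proposition~\ref{propSymGraphicahedra}(1). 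Alternatively, the explicit surface analysis above can be bypassed: since $\St_3$ is a tree, the torus action on the (orientable) manifold $M_{\St_3,\lambda}$ has complexity zero (Remark~\ref{remComplexity}), so by Proposition~\ref{propFacePoset} the orbit space $Q_{\St_3,\lambda}$ is an orientable $3$-manifold with corners whose topological boundary is an orientable closed surface with face poset $\Cl_{\St_3}\setminus\{\hat1\}$; again $\chi=0$ forces it to be $T^2$. Either way $K\cong T^2$ with the claimed subdivision into $12$ hexagons.
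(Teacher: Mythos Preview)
The paper does not actually prove this proposition; it merely records it with attribution to \cite{Rio-Francos_Hubard_Oliveros_Schulte:2012} and \cite{Ayzenberg_Buchstaber_MS:2021}. Your argument is therefore not a comparison target but a genuine self-contained proof, and it is correct. The enumeration of clusterings, the counts $24,36,12$, the identification of each rank-$2$ lower ideal with $\Cl_{\I_3}$ (a hexagon) via Proposition~\ref{propStructureOfFaces}, the edge- and vertex-link computations, and the Euler characteristic step are all accurate.

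Two small remarks. First, in your vertex-link computation the phrase ``single $6$-cycle'' describes the barycentric version of the link; if one uses the usual link (vertices $=$ incident edges, edges $=$ hexagon corners) it is a triangle. Either way it is $S^1$, so no harm. Second, in your toric-topology route to orientability you invoke that $Q_{\St_3,\lambda}$ is an orientable $3$-manifold with corners; this is true but deserves one sentence of justification: the action is locally standard (complexity zero), so on the free part $M_{\St_3,\lambda}^{\mathrm{free}}\to Q^{\circ}$ is a principal $T^3$-bundle, and since $T^3$ is connected it acts orientation-preservingly on the orientable manifold $M_{\St_3,\lambda}$, hence $Q^{\circ}$ (and so $Q$ and $\partial Q$) is orientable. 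With that line added, your second route is complete and avoids the finite orientation check entirely.
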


\begin{ex}
Let $\Cy_n$ be the cycle graph with $n$ vertices. The torus action on $M_{\Cy_n,\lambda}$ has complexity $1$, and this action is $(n-1)$-independent, since girth of $\Cy_n$ equals $n$, see Proposition~\ref{propIndependency}. Torus actions with these properties were studied by the first author in~\cite{Ayzenberg:2018} under the name torus actions of complexity one in general position. A prominent role in the theory of such actions is played by the face poset, and the union of all proper faces, which was called a \emph{sponge}.
\end{ex}

In~\cite{Rio-Francos_Hubard_Oliveros_Schulte:2012} and, independently, in~\cite{Ayzenberg:2020} the following construction appeared.

\begin{con}
Let $\alpha_1,\ldots,\alpha_n\in\Ro^{n-1}$ be a collection of vectors having equal lengths and equal pairwise angles. We have $\sum_{i=1}^{n}\alpha_i=0$. Let $N\subset\Ro^{n-1}$ be the lattice generated by $\alpha_i$'s, and let $N'\subset N$ be the sublattice generated by $\alpha_1-\alpha_2,\ldots,\alpha_{n-1}-\alpha_n$. There holds $N/N'\cong\Zo_n$.

The Voronoi subdivision of $\Ro^{n-1}$ generated by $N$ is the regular tiling of $\Ro^{n-1}$ by parallel copies of a permutohedron. Taking quotient of this subdivision by $N'$ we get a regular cell subdivision of a torus $T^{n-1}=\Ro^{n-1}/N'$, which has $n$ maximal cells, each isomorphic to a permutohedron. In~\cite{Ayzenberg:2020} the first author called the resulting cell subdivision \emph{the wonderful subdivision of a torus} and denoted the corresponding poset by $\PT^{n-1}$.
\end{con}

\begin{prop}[\cite{Rio-Francos_Hubard_Oliveros_Schulte:2012},\cite{Ayzenberg:2020}]\label{propCycleProps}
For $n\geqslant 3$, the poset $\Cl_{\Cy_n}\setminus\{\hat{1}\}$ is isomorphic to the wonderful subdivision $\PT^{n-1}$ of a torus.
\end{prop}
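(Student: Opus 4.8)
The plan is to exhibit an explicit isomorphism between $\Cl_{\Cy_n}\setminus\{\hat 1\}$ and the face poset $\PT^{n-1}$ of the wonderful subdivision, matching the combinatorial data on both sides. First I would identify the collection of vectors $\alpha_1,\dots,\alpha_n$ in the construction with the tangent weights $\epsilon_1-\epsilon_2,\dots,\epsilon_{n-1}-\epsilon_n,\epsilon_n-\epsilon_1$ of the cycle graph at a fixed point of $M_{\Cy_n,\lambda}$, via Proposition~\ref{propTangRepMatrices}: these are exactly $n$ vectors of equal length and equal pairwise angles in the hyperplane $\{\sum x_i=0\}\cong\Ro^{n-1}$, summing to zero. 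The lattice $N$ they generate and the sublattice $N'$ generated by the differences $\alpha_i-\alpha_{i+1}$ then encode the two "chiral" directions; $N/N'\cong\Zo_n$ is the obstruction to the action being $n$-independent rather than merely $(n-1)$-independent, in line with Proposition~\ref{propIndependency}.

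Next I would describe the faces of $\PT^{n-1}$ explicitly. A face of the permutohedral tiling of $\Ro^{n-1}$ by copies of $\Pe^{n-1}$ is indexed by a pair: a point of $N$ (which copy of the permutohedron) together with a proper face of that permutohedron; and the faces of $\Pe^{n-1}$ are classically indexed by ordered set partitions of $[n]$, equivalently by chains in the partition lattice together with a linear ordering of the blocks — i.e. by a clustering of $[n]$ (here every subset is a valid cluster since $\Cy_n$'s induced subgraphs on intervals are connected, but one must be careful that not all subsets of $[n]$ induce connected subgraphs of $\Cy_n$) plus an ordering datum. Taking the quotient by $N'$ collapses the lattice-translation coordinate modulo $N'$, so a face of $\PT^{n-1}$ is a pair: a clustering $\ca C$ of $[n]$ whose blocks induce connected subgraphs of $\Cy_n$, together with a compatible "assignment" living in the quotient $\Sigma_{[n]}/(\Sigma_{\ca C}\cdot N')$-type set. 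I would then check that this is precisely the left-coset data $\Sigma_{\ca C}\backslash\Sigma_V$ defining $\Cl_{\Cy_n}$, with the refinement order on clusterings matching the face-inclusion order on the tiling, and the covering $T$-invariant $2$-spheres $\mathbb S_{\sigma\tau}$ of Proposition~\ref{propClusProperties} matching the edges of the hexagonal-type cells — this gives an isomorphism on $1$-skeleta that extends.

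The main obstacle is the bookkeeping of the quotient by $N'$: one must verify that quotienting the permutohedral tiling by the rank-$(n-1)$ sublattice $N'$ identifies exactly those faces that correspond to $\ca C$-equivalent bijections, and no more — equivalently, that the $n$ maximal cells of $\PT^{n-1}$ correspond bijectively to the $n$ "interval clusterings into singletons shifted cyclically", and that the stabilizer/identification pattern on lower-dimensional faces is governed by the cosets $\Sigma_{\ca C}\backslash\Sigma_V$ rather than something coarser. A clean way to sidestep much of this is to invoke Corollary~\ref{corPosetsOfTwins} together with Proposition~\ref{propTwinsMatrices}: the face poset of $M_{\Cy_n,\lambda}$ is $\Cl_{\Cy_n}$ by Proposition~\ref{propFacePoset}, its orbit space is homeomorphic to that of the twin $M'_{\Cy_n,\lambda}$, and one can recognize the latter orbit space directly as $\PT^{n-1}$ by exhibiting the wonderful subdivision as the natural cell structure coming from the $T^n$-action; but since the Proposition is cited from \cite{Rio-Francos_Hubard_Oliveros_Schulte:2012} and \cite{Ayzenberg:2020}, I would simply refer to those sources for the geometric verification and restrict the present argument to matching the combinatorial indexing, emphasizing that the equivalence of skeleta from Theorem~\ref{thmMainAll}(4) makes $\Cl_{\Cy_n}$ and $\Gr_{\Cy_n}$ interchangeable for ranks below $n-2$, and that at the top the tree-free discrepancy is exactly the single class of $n$-cycles, accounted for by the $\Zo_n$ in $N/N'$.
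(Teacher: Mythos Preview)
The paper does not prove this proposition at all: it is stated with attributions to \cite{Rio-Francos_Hubard_Oliveros_Schulte:2012} and \cite{Ayzenberg:2020} and used as a black box, so there is no in-paper argument to compare against. Your final fallback --- deferring to those two references for the actual geometric verification --- is therefore precisely what the paper does, and nothing more is required here.

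That said, the sketch you give before the fallback has real gaps that would prevent it from standing on its own. First, a rank mismatch goes unnoticed: $\Cl_{\Cy_n}\setminus\{\hat 1\}$ has rank $n-2$, whereas the face poset of an $(n-1)$-dimensional cell complex naively has rank $n-1$; what makes the identification work is that the $n$ coatoms of $\Gr_{\Cy_n}$ just below $\hat 1$ (the sets $W$ obtained by deleting a single edge of the cycle) have connectivity $n-1$ and collapse onto $\hat 1$ under the Galois map to $\Cl_{\Cy_n}$ --- these, not ``interval clusterings into singletons shifted cyclically'', are what index the $n$ permutohedral top cells. Your description of the maximal cells is therefore incorrect. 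Second, the expression ``$\Sigma_{[n]}/(\Sigma_{\ca C}\cdot N')$-type set'' conflates a subgroup of $\Sigma_n$ with a lattice in $\Ro^{n-1}$; there is no group in which that product makes sense, and the honest bookkeeping (carried out in the cited references) matches lattice points to permutations via the affine Weyl group, not via an ad hoc quotient. Third, the proposed shortcut through Corollary~\ref{corPosetsOfTwins} and Proposition~\ref{propTwinsMatrices} is circular: recognizing the orbit space of the twin as $\PT^{n-1}$ is the content of the proposition, not a way around it. If you want an independent argument, the cleanest route is to work entirely on the $\Gr_{\Cy_n}$ side (where the permutohedral tiling and the quotient by $N'$ are transparent) and then invoke Theorem~\ref{thmMainAll}(3)--(4) to transport the result to $\Cl_{\Cy_n}$; but the paper itself does none of this and simply cites the result.
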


The next statement follows from Propositions~\ref{propStarProps} and~\ref{propCycleProps}.

\begin{cor}\label{corNonAcyclic}
If $\Gamma$ is either $\St_3$ or $\Cy_n$, $n\geqslant4$, we have $H_1(|(\Cl_\Gamma)_2|;\Zo)\neq 0$.
\end{cor}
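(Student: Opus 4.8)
The plan is to recognize $(\Cl_\Gamma)_2$, or a poset homotopy equivalent to it, as the $2$-skeleton of a regular cell subdivision of a torus, and then invoke the elementary fact that the first integral homology of a CW complex coincides with that of its $2$-skeleton. The toral descriptions of $\Cl_\Gamma\setminus\{\hat 1\}$ furnished by Propositions~\ref{propStarProps} and~\ref{propCycleProps} are exactly what is needed. I would first note that $\hat 1\notin(\Cl_\Gamma)_2$ in both cases: $\rk\Cl_{\St_3}=3$ and $\rk\Cl_{\Cy_n}=n-1\geqslant3$ for $n\geqslant4$, while $\hat 1=(\{V_\Gamma\},\ast)$ is the unique element of that top rank. (The hypothesis $n\geqslant4$ is essential here: for $n=3$ one has $\rk\Cl_{\Cy_3}=2$, so $(\Cl_{\Cy_3})_2=\Cl_{\Cy_3}$ has a top element and $|(\Cl_{\Cy_3})_2|$ is contractible.)

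For $\Gamma=\St_3$ there is nothing to truncate: since $\hat 1$ is the unique rank-$3$ element, $(\Cl_{\St_3})_2=\Cl_{\St_3}\setminus\{\hat 1\}$, which by Proposition~\ref{propStarProps} is the face poset of a regular cell subdivision of $T^2$ into hexagons; hence $|(\Cl_{\St_3})_2|$ (the barycentric subdivision of that complex) is homeomorphic to $T^2$ and $H_1(|(\Cl_{\St_3})_2|;\Zo)\cong\Zo^2\neq0$. For $\Gamma=\Cy_n$, $n\geqslant4$, Proposition~\ref{propCycleProps} presents $\Cl_{\Cy_n}\setminus\{\hat 1\}$ as the face poset of a regular cell subdivision $K$ of a torus. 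A poset isomorphism of graded posets preserves rank, and for a regular CW complex the rank of a cell in its face poset equals its dimension, so the rank-$\leqslant2$ subposet $(\Cl_{\Cy_n})_2$ is carried to the face poset of the $2$-skeleton $K^{(2)}$. Thus $|(\Cl_{\Cy_n})_2|\cong K^{(2)}$, and $H_1(K^{(2)};\Zo)\cong H_1(|K|;\Zo)\neq0$ because $|K|$ is a torus of positive dimension.

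The step I expect to require the most care is this matching of the rank filtration of $\Cl_\Gamma$ with the skeletal filtration of the toral subdivision, which rests on regularity of the subdivisions in Propositions~\ref{propStarProps} and~\ref{propCycleProps}. One can sidestep any worry about their precise combinatorics by arguing homotopically instead: since $2<\rk\Cl_\Gamma$, the skeleton-preserving Galois connection of Theorem~\ref{thmMainAll}(3) restricts to a Galois connection $(\Cl_\Gamma)_2\rightleftarrows(\Gr_\Gamma)_2$, whence $|(\Cl_\Gamma)_2|\simeq|(\Gr_\Gamma)_2|$ by Lemma~\ref{lemGaloisHomotopy}; and because the girth of $\Gamma$ is at least $4$, Lemma~\ref{lemConRkSimple} identifies $(\Gr_\Gamma)_2$ with the $2$-skeleton of the toral cell subdivision $\Gr_\Gamma\setminus\{\hat 1\}$ of Proposition~\ref{propSymGraphicahedra}, whose first homology is again nonzero.
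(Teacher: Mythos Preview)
Your proposal is correct and follows exactly the route the paper intends: the paper merely states that the corollary ``follows from Propositions~\ref{propStarProps} and~\ref{propCycleProps}'' without further detail, and your main argument supplies precisely those missing details (identifying $(\Cl_\Gamma)_2$ with the $2$-skeleton of the toral cell complex and using that $H_1$ of a CW complex agrees with $H_1$ of its $2$-skeleton). One small remark on your alternative route: Lemma~\ref{lemConRkSimple} alone does not give $\{s:\conn s\leqslant 2\}=\{s:\rk s\leqslant 2\}$ in $\Gr_\Gamma$; you really need the girth-$\geqslant 4$ hypothesis there (any $W$ containing a cycle has $\conn W\geqslant g-1\geqslant 3$), which you do mention, so the argument is fine once that observation is made explicit.
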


In~\cite{Ayzenberg_Masuda_Solomadin:2022} the following result was proved.

\begin{prop}[\cite{Ayzenberg_Masuda_Solomadin:2022}]\label{propSkeletonAcyclic}
If an action of $T$ on $X$ is equivariantly formal and $j$-independent, then the poset $S(X)_r=\{F\in S(X)\mid \rk F\leqslant r\}$ is $\min(\dim S(X)_r-1,j+1)$-acyclic for any $r>0$.
\end{prop}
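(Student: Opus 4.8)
The plan is to prove the statement on the equivariant cohomology side, using the Atiyah--Bredon (skeletal) complex of $X$ --- whose exactness is exactly what equivariant formality provides --- and using $j$-independence to force the low-rank part of the face filtration to consist of complexity-zero ``toric'' pieces, so that this exact complex becomes, in a range of degrees, the cellular cochain complex of $|S(X)_r|$.

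First I would introduce the orbit-dimension filtration $X^T=X_0\subseteq X_1\subseteq\cdots\subseteq X_k=X$, $k=\dim T$, and record that $X_i=\bigcup\{X_F\mid \rk F\leqslant i\}$, since a point lies on an $i$-dimensional orbit precisely when the face containing it in its relative interior has rank $i$; in particular $S(X)_r$ is the face poset of $X_r$, and excision splits $H^*_T(X_i,X_{i-1})\cong\bigoplus_{\rk F=i}H^*_T(X_F,\partial X_F)$, where $\partial X_F=X_F\cap X_{i-1}$ is the union of the proper face submanifolds of $X_F$. Equivariant formality of $X$ is equivalent to exactness of the Atiyah--Bredon complex
\[
0\to H^*_T(X)\to H^*_T(X_0)\to H^{*+1}_T(X_1,X_0)\to\cdots\to H^{*+k}_T(X_k,X_{k-1})\to 0
\]
(the first nontrivial spot being the Chang--Skjelbred lemma), and truncating at the $r$-th position its tail computes $H^*_T(X,X_r)$.

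The key step is to make the terms $H^*_T(X_F,\partial X_F)$ explicit for faces of small rank, and here $j$-independence enters twice. It means the tangent-weight matroid $\{\alpha_x\}$ at every fixed point has no circuit of size $\leqslant j$, hence every subset of $\{\alpha_x\}$ of size $\leqslant j-1$ is a flat; so for $\rk F\leqslant j-1$ the interval $[x,F]$ in $S(X)_{\geqslant x}=\Flats(\{\alpha_x\})$ is a Boolean cube, which makes the truncation of $S(X)$ to ranks $\leqslant j-1$ the face poset of a regular CW complex sitting inside $|S(X)_r|$; moreover no weight outside the $\rk F$ chosen ones lies in the span of $F$, so $X_F$ has complexity zero and $H^*_T(X_F,\partial X_F)$ is a free $H^*(BT_F)$-module on a single generator in cohomological degree $\rk F$. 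Comparing the Atiyah--Bredon differential --- given by face incidences, carrying along the restriction maps $H^*(BT_{F'})\to H^*(BT_F)$ --- with the cellular coboundary, one identifies, in total cohomological degrees up to $\min(\dim S(X)_r-1,\,j+1)$, the truncated Atiyah--Bredon complex with a polynomial extension of the reduced cellular cochain complex of $|S(X)_r|$; exactness then forces $\widetilde H^{\,i}(|S(X)_r|)=0$ in that range. The bound $\dim S(X)_r-1=r-1$ is the length of the truncated complex, while $j+1$ is where the complexity-zero description of faces ceases to be guaranteed.

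The part I expect to be the main obstacle is wringing the full value $\min(r-1,j+1)$ out of this comparison. The Boolean/complexity-zero description of faces is only available below rank $j$, and naively it yields acyclicity only up to roughly $j-1$; to gain the last degrees one must exploit the exactness of the Atiyah--Bredon complex at positions $j$ and $j+1$, where faces of positive complexity --- corresponding to the first circuits of the weight matroids --- begin to contribute, and check that no differential in the associated spectral sequence destroys the range. A secondary technical point is the coefficient ring: exactness of the Atiyah--Bredon complex is cleanest over a field, and over $\Zo$ one must combine equivariant formality with the absence of torsion in $H^*(X;\Zo)$ to run the same computation.
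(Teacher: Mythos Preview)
The paper does not contain a proof of this proposition; it is merely quoted from the reference~\cite{Ayzenberg_Masuda_Solomadin:2022} and used as a black box to deduce Proposition~\ref{propStCyNonFormal}. So there is no ``paper's own proof'' to compare your attempt against.

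That said, your sketch is exactly the argument given in the cited source: one uses the Atiyah--Bredon sequence associated with the orbit-type filtration, identifies the low-rank terms with the cellular cochain complex of $|S(X)_r|$ via $j$-independence (which forces faces of small rank to have complexity zero, hence to look like toric pieces), and reads off acyclicity from exactness. Your self-diagnosed obstacle --- squeezing out the last degree to reach $j+1$ rather than $j-1$ --- is precisely where the work lies in that paper, and it is handled by a careful analysis of the contribution of rank-$j$ and rank-$(j+1)$ faces to the spectral sequence. Your remark on coefficients is also apt: the cited result is stated over $\Zo$ for actions with $H^{\odd}(X;\Zo)=0$, and the torsion-free hypothesis is indeed what makes the integral Atiyah--Bredon argument go through.
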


The phrase ``a poset $S$ is $t$-acyclic'' means $\Hr_i(|S|;\Zo)=0$ for $i\leqslant t$. Corollary~\ref{corNonAcyclic} and Proposition~\ref{propSkeletonAcyclic} imply the following statement.

\begin{prop}\label{propStCyNonFormal}
The torus actions on the manifolds $M_{\St_3,\lambda}$ and $M_{\Cy_n,\lambda}$, $n\geqslant 4$, are not equivariantly formal.
\end{prop}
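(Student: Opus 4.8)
The plan is to combine the homotopical obstruction to equivariant formality coming from Proposition~\ref{propSkeletonAcyclic} with the explicit non-acyclicity of the $2$-skeleton of $\Cl_\Gamma$ recorded in Corollary~\ref{corNonAcyclic}. Concretely, by Proposition~\ref{propFacePoset} the face poset $S(M_{\Gamma,\lambda})$ is isomorphic to $\Cl_\Gamma$, so the rank-$r$ truncation $S(M_{\Gamma,\lambda})_r$ is exactly the skeleton $(\Cl_\Gamma)_r$ of Construction~\ref{conSkeleta}. We apply Proposition~\ref{propSkeletonAcyclic} with $r=2$. For this we need a value of $j$ for which the action is $j$-independent and for which $\min(\dim (\Cl_\Gamma)_2 - 1, j+1)\geqslant 1$; by Proposition~\ref{propIndependency} the action is $(g-1)$-independent, and for $\Gamma=\St_3$ we have $g=+\infty$, while for $\Gamma=\Cy_n$ with $n\geqslant 4$ we have $g=n\geqslant 4$, so in both cases the action is at least $3$-independent and in particular $1$-independent, giving $j+1\geqslant 2$. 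It remains to check $\dim(\Cl_\Gamma)_2 - 1\geqslant 1$, i.e. that $(\Cl_\Gamma)_2$ has order-complex dimension at least $2$: this holds since $(\Cl_\Gamma)_2$ contains chains $x<\mathbb{S}_{\sigma\tau}<s$ of length $2$ (rank-$0$ fixed point, rank-$1$ sphere, rank-$2$ face), using the description of faces from Propositions~\ref{propFacePoset} and~\ref{propClusProperties}.

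The argument then runs by contradiction. Suppose the $T$-action on $M_{\Gamma,\lambda}$ were equivariantly formal. Proposition~\ref{propSkeletonAcyclic} applied with $r=2$ would force $S(M_{\Gamma,\lambda})_2 = (\Cl_\Gamma)_2$ to be $\min(\dim(\Cl_\Gamma)_2-1,\,j+1)$-acyclic, hence in particular $1$-acyclic, which by definition means $\widetilde H_0(|(\Cl_\Gamma)_2|;\Zo)=0$ and $\widetilde H_1(|(\Cl_\Gamma)_2|;\Zo)=0$. But Corollary~\ref{corNonAcyclic} asserts precisely $H_1(|(\Cl_\Gamma)_2|;\Zo)\neq 0$ for $\Gamma=\St_3$ and for $\Gamma=\Cy_n$ with $n\geqslant 4$. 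This contradiction shows that neither $M_{\St_3,\lambda}$ nor $M_{\Cy_n,\lambda}$ ($n\geqslant 4$) is equivariantly formal.

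There is little genuine difficulty here, since the heavy lifting was done in the earlier results being cited; the only point that requires care is bookkeeping about the two different "$j$"s (the independence parameter versus the acyclicity range) and verifying that the dimension term $\dim(\Cl_\Gamma)_2 - 1$ is not the binding constraint — that is, that it is at least $1$. One should also note that Corollary~\ref{corNonAcyclic} is stated for $|(\Cl_\Gamma)_2|$, the geometric realization of the poset, which matches the convention "$\widetilde H_i(|S|;\Zo)$" used in the phrase "$t$-acyclic" after Proposition~\ref{propSkeletonAcyclic}, so no translation between "poset" and "order complex" homology is needed. If one wished, the same conclusion for the twins $M'_{\St_3,\lambda}$ and $M'_{\Cy_n,\lambda}$ follows immediately from Corollary~\ref{corTwinMatrixPoset}, since equivariant formality is detected through the face poset in exactly the same way.
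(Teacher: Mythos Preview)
Your proof is correct and follows exactly the approach the paper intends: the paper's own proof is the single sentence ``Corollary~\ref{corNonAcyclic} and Proposition~\ref{propSkeletonAcyclic} imply the following statement,'' and you have merely unpacked that implication carefully, checking the independence parameter via Proposition~\ref{propIndependency} and the dimension bound $\dim(\Cl_\Gamma)_2\geqslant 2$. Nothing is missing and nothing diverges from the paper's argument.
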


This result was proved in~\cite{Ayzenberg_Buchstaber_MS:2021} and~\cite{Ayzenberg:2020}, but instead of Proposition~\ref{propSkeletonAcyclic} slightly different arguments were used.

\begin{rem}
Proposition~\ref{propStCyNonFormal} provides examples of non-formal isospectral matrix spaces. However, there also exist examples of equivariantly formal manifolds, see Example~\ref{exFullFlagVar} and~\ref{exTomei}. If $\Gamma$ is the complete graph $K_n$, the manifold $M_{K_n,\lambda}$ is the full flag variety $\Fl(\Co^n)$. It has even-dimensional Bruhat cell decomposition, hence $H^{\odd}(\Fl(\Co^n);\Zo)=0$, hence $M_{K_n,\lambda}$ is equivariantly formal. If $\Gamma$ is the path graph $\I_n$, the Hermitian Tomei manifold $M_{\I_n,\lambda}$ is a quasitoric manifold over a permutohedron, and all quasitoric manifolds are equivariantly formal. More generally, in~\cite{Ayzenberg_Buchstaber_IMRN:2021} we proved that the manifolds of isospectral staircase Hermitian matrices (which correspond to indifference graphs) are all equivariantly formal. Both $K_n$ and $\I_n$ are indifference graphs.
\end{rem}

The discussion above motivates the following problem.

\begin{probl}\label{probMainFormality}
For which graphs $\Gamma$, the torus action on a generic manifold $M_{\Gamma,\lambda}$ is cohomologically equivariantly formal?
\end{probl}

A similar problem can be posed for the discrete 2-torus actions on the manifold $M^{\Ro}_{\Gamma,\lambda}$ of real symmetric $\Gamma$-shaped matrices with the given spectrum $\lambda$, and equivariant formality over $\Zt$. Moreover, a problem similar to Problem~\ref{probMainFormality} can be posed in all Lie types in the following sense.

\begin{con}\label{conLieGen}
Recall that $M_n$ denotes the set of all Hermitian matrices. Multiplying all matrices by $\sqrt{-1}$ we get skew-Hermitian matrices, so we may identify $M_n$ with the Lie algebra $\uuu(n)$ of type $A$. The action of the compact Lie group $U(n)$ on $M_n$ is identified with the (co)adjoint action of $U(n)$, and $T^n\subset U(n)$ is considered as a maximal torus.

In a much similar way, one can take the (co)adjoint representation of any semisimple compact Lie group $G$ on the tangent algebra $\g$. Let $E\subseteq \Phi^+$ be a subset of positive roots and
\[
M_E=\ttt\oplus\bigoplus_{\alpha\in E}L_\alpha
\]
the corresponding $T$-subrepresentation in $\g$. Consider the intersection of $M_E$ with a generic $G$-orbit in $\g$:
\[
M_{E,\lambda}=M_E\cap G\lambda.
\]
It can be seen that there is an induced $T$-action on $M_{E,\lambda}$. The space $M_{\Gamma,\lambda}$ appears as a particular case if one works with Lie type A. A general problem emerges.
\end{con}

\begin{probl}\label{probLieTypesGen}
For which positive root sets $E\subseteq\Phi^+$, the manifold $M_{E,\lambda}$ (for generic $\lambda$) is equivariantly formal?
\end{probl}

Looking at Construction~\ref{conLieGen}, one concludes that there is a meaningful notion of cluster-permutohedra of general Lie types. At least, they can be defined synthetically by
\begin{equation}\label{eqLieTypeClusterPerm}
\Cl_{E}=S(M_{E,\lambda}),
\end{equation}
as the face poset of the torus action on $M_{E,\lambda}$. We believe that such cluster-permutohedra deserve further study in relation to Problem~\ref{probLieTypesGen}.

\section{Identification constructions for posets}\label{secIdentifications}

\subsection{Identification constructions}

The aim of this section is to describe the fundamental similarity between cluster-permutohedra, graphicahedra and quasitoric manifolds/small covers. We introduce a construction of a symmetric identification set. This construction provided with appropriate structures gives all the above-listed objects. A little modification of this construction allows to describe face posets of Grassmann manifolds, manifolds of partial flags and many other related manifolds with the torus action.

\begin{con}\label{conIdentSetBasic}
Let $M$ be a set, $G$ be a group and $\sbgrps(G)$ the set of all subgroups of $G$, and $Z$ a set with a left action of $G$. A map $\Lambda\colon M\to\sbgrps(G)$ will be called a characteristic map. The \emph{symmetric identification set} is defined as the identification set
\[
M\times_\Lambda Z=(M\times Z)/\simc,
\]
where $(m_1,z_1)\sim(m_2,z_2)$ if and only if $m_1=m_2$ and $z_1,z_2$ lie in the same $\Lambda(m_1)$-orbit on~$Z$.
\end{con}

The most common use of this construction is the situation when $Z=G$ and $G$ acts on $G$ by multiplication from left. In this case
\[
M\times_\Lambda G=(M\times G)/\simc,
\]
where $(m_1,g_1)\sim(m_2,g_2)$ if and only if $m_1=m_2$ and $g_1g_2^{-1}\in\Lambda(m_1)$. In this case we have an induced right $G$-action on $M\times_\Lambda G$ given by: $[(m,g_1)]\cdot g=[(m,g_1g)]$, and the quotient $(M\times_\Lambda G)/G$ is canonically bijective to $M$.

Generally, if $\Lambda$ takes values in normal subgroups, a symmetric identification set $M\times_\Lambda Z$ attains the well-defined left $G$-action given by $g\cdot[(m,z)]=[(m,gz)]$. In this case, the orbit set $(M\times_\Lambda Z)/G$ is isomorphic to $M\times(Z/G)$.

There are obvious enrichments of this construction to common categories.

\begin{con}\label{conIdentSetTop}
We may pose topology on all objects under consideration. In this case, $M$ and $Z$ are topological spaces, $G$ is a topological group, $\sbgrps(G)$ is the set of all closed subgroups of $G$ ordered by inclusion, and a characteristic map $\Lambda\colon M\to\sbgrps(G)$ is usually assumed upper semicontinuous, see~\cite{Buchstaber_Ray:2008}. Then the symmetric identification set $M\times_\Lambda Z$ gets the standard identification topology, the $G$-action on this space is continuous and the quotient is homeomorphic to $M\times (Z/G)$.
\end{con}

\begin{con}\label{conIdentSetPoset}
Let $M$ be a poset, $\Lambda\colon M\to\sbgrps(G)$ be monotonic, and $Z$ be a trivial poset (every two distinct elements are incomparable). Then the symmetric identification set $M\times_\Lambda Z$ gets the partial order induced from $M$:
\[
[(m_1,z_1)]\leqslant[(m_2,z_2)] \mbox{ if and only if }m_1\leqslant m_2\mbox{ and }z_1,z_2\mbox{ lie in }\Lambda(m_2)\mbox{-orbit}.
\]
In this case $M\times_\Lambda Z$ is a poset. If $M$ is graded, then so is $M\times_\Lambda Z$. The $G$-action on $M\times_\Lambda Z$ (if it exists) preserves the order, and its quotient is isomorphic to $M\times (Z/G)$ as a poset.
\end{con}

Topological and ordered settings are connected to each other in all meaningful ways. For example, Construction~\ref{conIdentSetPoset} is a particular case of Construction~\ref{conIdentSetTop} if one replaces the poset by the corresponding Alexandroff topology.

\begin{ex}\label{exSmallCover}
Let $P$, $\dim P=n$, be a simple convex polytope with facets $\Facets(P)=\{\F_1,\ldots,\F_m\}$, let $G=\Zt^n$, and let $\lambda\colon\Facets(P)\to \Zt^n$ be a map satisfying the following condition: if $F=\F_{i_1}\cap\cdots\cap\F_{i_k}$ is a nonempty face of $P$, then $\lambda(\F_{i_1}),\ldots,\lambda(\F_{i_k})$ are linearly independent in $\Zt^n$. Then we can define a map $\Lambda\colon P\to \sbgrps(\Zt^n)$ by setting
\[
\Lambda(x)=\lambda(\F_{i_1})\times\cdots\times\lambda(\F_{i_k})\mbox{ if } x \mbox{ lies in the interior of }\F_{i_1}\cap\cdots\cap\F_{i_k}.
\]
In this case, the symmetric identification space $P\times_\Lambda\Zt^n$ with the action of $\Zt^n$ is called a \emph{small cover} over $P$.

In a similar fashion, a quasitoric manifold $P\times_\Lambda T^n$ is defined, see~\cite{Davis_Januszkiewicz:1991,Buchstaber_Panov:2015}. Moment-angle manifolds and their real analogues are defined similarly.
\end{ex}

\begin{ex}\label{exClusterIdent}
Now recall that $\Flats(\Gamma)$ denotes the poset of flats of the graphical matroid of a simple graph $\Gamma=([n],E_\Gamma)$. The elements of $\Flats(\Gamma)$ can be identified with clusterings $\ca{C}=\{V_1,\ldots,V_k\}$, see Construction~\ref{conClusterPerm}. Let $G=Z=\Sigma_n$ and let us define $\Lambda\colon \Flats(\Gamma)\to \sbgrps(\Sigma_n)$ by $\Lambda(\ca{C})=\Sigma_{\ca{C}}=\Sigma_{V_1}\times\cdots\times\Sigma_{V_k}$. Then the symmetric identification poset
\[
\Flats(\Gamma)\times_\Lambda\Sigma_n
\]
is isomorphic to the cluster-permutohedron $\Cl_\Gamma$, simply by the definition.

In a similar way, the graphicahedron $\Gr_\Gamma$ coincides with the symmetric identification poset $2^{E_\Gamma}\times_\Lambda\Sigma_n$.
\end{ex}

\subsection{Partial flags and beyond}

We now switch to more complicated examples. In~\cite{Ayzenberg_Cherepanov:2022} the question was posed: how one can describe the face poset of Grassmann manifolds and more general manifolds of partial flags with arbitrary torus actions in combinatorial terms? Here we answer this question for the standard torus action on manifolds of partial flags.

\begin{con}
Let $k_1,\ldots,k_s$ be a sequence of positive integers, $n=\sum_{1}^{s}k_i$. This defines an increasing sequence $(d_1,\ldots,d_s)$, $0<d_1<\cdots<d_{s-1}<d_s=n$ where $d_j=\sum_{1}^{j}k_i$. We have a manifold of partial flags (of type A)
corresponding to $\bar{k}=(k_1,\ldots,k_s)$:
\[
\Fl_{\bd}=\{V_\bullet=\{0\}\subset V_{d_1}\subset \cdots\subset V_{d_{s-1}}\subset \Co^n\mid \dim_{\Co}V_{d_i}=d_i\}.
\]
It is diffeomorphic to $U(n)/(U(k_1)\times\cdots\times U(k_s))$.
%При помощи плюккеровых координат нетрудно показать, что $\Fl_{\bd}$ является гладким проективным многообразием комплексной размерности
%\[
%N=\dim_{\Co} \Fl_{\bd}=\dfrac{n(n-1)}{2}-\sum_{i=1}^{s}\dfrac{n_i(n_i-1)}{2}.
%\]
The standard $T^n$-action on $\Co^n$ induces the $T^n$-action on $\Fl_{\bd}$ with 1-dimensional non-effective kernel $\Delta(T^1)$. %However, it will be more convenient for us to work with noneffective torus action.%Therefore there is an effective action of $(n-1)$-dimensional torus $T=T^n/\Delta(T^1)$ on $\Fl_{\bd}$.
\end{con}

The only $T$-stable subspaces are the coordinate spaces, which implies

\begin{lem}\label{lemFixedPointPartFlags}
Fixed points of the $T$-action on $\Fl_{\bd}$ are the coordinate flags
\[
A_{J_1,\ldots,J_s}=\Co^{J_1}\subset \Co^{J_1\sqcup J_2}\subset\cdots\subset \Co^{J_1\sqcup \cdots\sqcup J_s}\mbox{ with }\bigsqcup J_i=[n].
\]
There are
\begin{equation}\label{eqNumberFixedPtsPartFlags}
|\Fl_{\bd}^{T}|=\dfrac{n!}{k_1!k_2!\cdots k_s!}
\end{equation}
many isolated fixed points.
\end{lem}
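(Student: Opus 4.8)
The plan is to identify the fixed points directly from the observation already stated: the only $T^n$-invariant linear subspaces of $\Co^n$ are the coordinate subspaces $\Co^J = \operatorname{span}(e_i \mid i \in J)$ for $J \subseteq [n]$. First I would note that a point $V_\bullet = (V_{d_1} \subset \cdots \subset V_{d_{s-1}} \subset \Co^n)$ of $\Fl_{\bd}$ is fixed by $T^n$ if and only if each $V_{d_j}$ is a $T^n$-invariant subspace. Since each $T$-invariant subspace is a coordinate subspace, being fixed forces $V_{d_j} = \Co^{S_j}$ for some $S_j \subseteq [n]$ with $|S_j| = d_j$, and the flag condition $V_{d_1} \subset \cdots \subset V_{d_{s-1}}$ translates into $S_1 \subset S_2 \subset \cdots \subset S_{s-1}$.

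Next I would repackage such a nested chain of sets as an ordered partition. Setting $J_1 = S_1$, $J_j = S_j \setminus S_{j-1}$ for $2 \le j \le s-1$, and $J_s = [n] \setminus S_{s-1}$, one gets an ordered tuple $(J_1,\ldots,J_s)$ of pairwise disjoint subsets with $\bigsqcup_i J_i = [n]$ and $|J_i| = k_i$ (the last cardinality follows since $|S_{s-1}| = d_{s-1} = n - k_s$). Conversely, any such tuple reconstructs the chain $S_j = J_1 \sqcup \cdots \sqcup J_j$, so this correspondence is a bijection between the $T$-fixed points and the coordinate flags $A_{J_1,\ldots,J_s}$ as displayed in the statement. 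I should also remark that each such flag is genuinely $T$-fixed (not merely $T$-invariant as a set), since $T^n$ acts on each $\Co^{J}$ trivially as a point of the Grassmannian — $\Co^J$ is sent to itself — so the whole flag is a stationary point.

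Finally, for the count: choosing an ordered partition of $[n]$ into blocks of sizes $k_1,\ldots,k_s$ is the same as choosing $J_1$ ($\binom{n}{k_1}$ ways), then $J_2$ from the remainder, and so on; the product of binomial coefficients telescopes to the multinomial coefficient $\binom{n}{k_1,\ldots,k_s} = \dfrac{n!}{k_1!\cdots k_s!}$, which gives \eqref{eqNumberFixedPtsPartFlags}. Alternatively, fixed points of $U(n)/(U(k_1)\times\cdots\times U(k_s))$ under the maximal torus are indexed by $\Sigma_n / (\Sigma_{k_1}\times\cdots\times\Sigma_{k_s})$, which has the same cardinality.

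There is no serious obstacle here; the only thing requiring a moment of care is making the bijection between nested subset-chains and ordered set-partitions explicit and checking the cardinality bookkeeping ($|J_s| = k_s$ in particular), together with the elementary verification that coordinate flags are in fact fixed and that no other fixed points arise — both of which reduce immediately to the cited fact that $T$-invariant subspaces of $\Co^n$ are exactly the coordinate subspaces.
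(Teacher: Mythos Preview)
Your proposal is correct and matches the paper's approach exactly: the paper does not give an explicit proof but simply states that the lemma follows from the observation that the only $T$-stable subspaces of $\Co^n$ are the coordinate subspaces, which is precisely the starting point you spell out in detail. Your write-up just fills in the bookkeeping (the bijection between nested coordinate chains and ordered partitions, and the multinomial count) that the paper leaves implicit.
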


\begin{prop}\label{propPartFlagConnStab}
For a $T$-action on $\Fl_{\bd}$ the stabilizers $T_x$ have the form $\Delta_{J_1}(T^1)\times\cdots\times\Delta_{J_s}(T^1)$ where $|J_i|=k_i$, $\{J_1,\ldots,J_s\}$ is a partition of $[n]$, and $\Delta_{J_i}\colon T^1\to T^{J_i}$ is the diagonal map into the coordinate subtorus. The tangent weights at a fixed point $A_{J_1,\ldots,J_s}$ are given by
\begin{equation}\label{eqRootsMultiPart}
\{e_i-e_j\mid i\in J_a, j\in J_b, a<b\}
\end{equation}
where $e_1,\ldots,e_n$ is the standard basis of $\Hom(T,S^1)\cong\Zo^n$.
\end{prop}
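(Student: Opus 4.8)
The plan is to realize $\Fl_{\bd}$ as the homogeneous space $U(n)/P$ with $P=U(k_1)\times\cdots\times U(k_s)$ the block-diagonal subgroup (as in the construction above), so that the standard coordinate flag is the base point, $T=T^n$ acts by residual left translation, and the remaining fixed points $A_{J_1,\ldots,J_s}$ of Lemma~\ref{lemFixedPointPartFlags} are the images of the base point under permutation matrices. First I would pin down the stabilizers. Fix a flag $V_\bullet=(V_{d_1}\subset\cdots\subset V_{d_{s-1}}\subset\Co^n)$; its isotropy group
\[
T_{V_\bullet}=\{t\in T^n\mid tV_{d_i}=V_{d_i}\text{ for all }i\}
\]
is a closed subgroup of $T^n$, acting diagonally on $\Co^n$, so its isotypic decomposition is by coordinate subspaces: there is a partition $[n]=J_1\sqcup\cdots\sqcup J_r$ with $\Co^n=\Co^{J_1}\oplus\cdots\oplus\Co^{J_r}$ on which $T_{V_\bullet}$ acts through pairwise distinct characters $\chi_1,\dots,\chi_r$, by a scalar on each $\Co^{J_a}$. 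Hence $T_{V_\bullet}\subseteq\Delta_{J_1}(T^1)\times\cdots\times\Delta_{J_r}(T^1)$. Conversely, each $V_{d_i}$ is $T_{V_\bullet}$-invariant, so, the $\chi_a$ being distinct, it splits as $V_{d_i}=\bigoplus_a(V_{d_i}\cap\Co^{J_a})$; since $\Delta_{J_a}(T^1)$ acts on $\Co^{J_a}$ by scalars it preserves every subspace there, so $\Delta_{J_1}(T^1)\times\cdots\times\Delta_{J_r}(T^1)$ already fixes $V_\bullet$. Therefore $T_{V_\bullet}=\Delta_{J_1}(T^1)\times\cdots\times\Delta_{J_r}(T^1)$; in particular every stabilizer is connected and of block-diagonal form, and at $x=A_{J_1,\ldots,J_s}$, the coordinate flag of the ordered partition $(J_1,\ldots,J_s)$ with $|J_i|=k_i$, it is all of $T^n$.

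For the tangent weights at $x=A_{J_1,\ldots,J_s}$ I would use the homogeneous description: the tangent representation at $x$ is $\uuu(n)/\mathfrak{p}$, where $\mathfrak{p}=\bigoplus_a\uuu(J_a)$ is the subalgebra of matrices block-diagonal with respect to $\{J_1,\ldots,J_s\}$. Complexifying, $\uuu(n)\otimes\Co=\mathfrak{gl}_n(\Co)$ carries the adjoint $T$-action with weight $e_i-e_j$ on the root line $\Co E_{ij}$ ($i\neq j$) and weight $0$ on the diagonal, and $\mathfrak{p}\otimes\Co$ is spanned by the diagonal together with the $\Co E_{ij}$ for $i,j$ in a common block. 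Hence the complexified tangent representation is $\bigoplus\{\,\Co E_{ij}\mid i\in J_a,\ j\in J_b,\ a\neq b\,\}$, and passing back to the real $2$-dimensional irreducibles, where $V(e_i-e_j)\cong V(e_j-e_i)$ so that each unordered pair of blocks is counted once, produces exactly the list~\eqref{eqRootsMultiPart} (up to the global sign ambiguity of Construction~\ref{conTangentWeights}). As a sanity check, the number of these weights is $\sum_{a<b}k_ak_b=\tfrac12\bigl(n^2-\sum_a k_a^2\bigr)=\dim_\Co\Fl_{\bd}$.

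The only genuinely delicate step is the identification of the stabilizers: one must ensure that the isotypic partition $\{J_a\}$ of $T_{V_\bullet}$ is coarse enough that the \emph{full} block-diagonal torus $\prod_a\Delta_{J_a}(T^1)$ still fixes $V_\bullet$, and this is precisely what the scalar action on each $\Co^{J_a}$ guarantees. An alternative route sidestepping this point is to specialize the earlier isospectral analysis: choosing a spectrum $\lambda$ that is constant on $s$ groups of eigenvalues of sizes $k_1,\ldots,k_s$ gives $M_{K_n,\lambda}\cong\Fl_{\bd}$ by Example~\ref{exFullFlagVar}, after which an analysis parallel to the proof of Proposition~\ref{propFacePoset} together with the weight list of Proposition~\ref{propTangRepMatrices} for $K_n$ (discarding the weights $e_i-e_j$ with $i,j$ in a common block, on which $T$ now acts trivially) yields the statement.
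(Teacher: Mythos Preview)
Your argument is correct, and your tangent-weight computation via the root-space decomposition of $\uuu(n)/\mathfrak{p}$ is precisely what the paper's sketch means when it says the analysis is ``similar to Proposition~\ref{propTangRepMatrices} and left as an exercise''. The stabilizer part, however, follows a genuinely different route. The paper invokes Bialynicki-Birula theory: since $\Fl_{\bd}$ is smooth projective with an algebraic torus action, every point $x$ flows under a generic one-parameter subgroup to a fixed point $l(x)$, and $T_{tx}=T_x$ along the flow, so the global classification of stabilizers reduces to the local one near fixed points (i.e., in the tangent representation). You instead compute $T_{V_\bullet}$ directly, by observing that its isotypic decomposition of $\Co^n$ is by coordinate subspaces and that each $V_{d_i}$ splits accordingly, so the full block-diagonal torus already stabilizes $V_\bullet$. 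This is more elementary and self-contained --- it uses nothing about $\Fl_{\bd}$ beyond the definition, and would work equally well in non-algebraic or non-compact settings --- whereas the paper's approach trades this for a reduction principle that applies uniformly to any smooth projective $T$-variety. Your closing remark, realizing $\Fl_{\bd}$ as $M_{K_n,\lambda}$ with a spectrum of prescribed multiplicities and reading off the statement from Propositions~\ref{propFacePoset} and~\ref{propTangRepMatrices}, is exactly the viewpoint the paper adopts in the proof of Proposition~\ref{propMatrMultiplicities}.
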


\begin{proof}
We give a sketch of proof. Since the action on $\Fl_{\bd}$ can be extended to the algebraic torus actions and $\Fl_{\bd}$ is smooth projective, one can apply Bialynicki-Birula theory~\cite{Bialynicki-Birula:1973}. For a given point $x$, the limiting point $l(x)=\lim_{t\to 0}tx$ is a fixed point. Notice that there holds $T_{tx}=T_x$. So far, to prove the proposition one only needs to find stabilizers in vicinity of fixed points, that is to look at all tangent representations. The analysis of tangent representations is pretty much similar to Proposition~\ref{propTangRepMatrices} and left as an exercise to the reader.
\end{proof}

Since the type A positive roots listed in~\eqref{eqRootsMultiPart} correspond to edges of the complete multipartite graph $K_{J_1,\ldots,J_s}$ with parts of sizes $|J_i|=k_i$, Proposition~\ref{propPartFlagConnStab} together with Proposition~\ref{propAyzCherep} imply the following statement.

\begin{cor}\label{corMatroidPartFlags}
For any fixed point $x=A_{J_1,\ldots,J_s}$, the poset $S(\Fl_{\bd})_{\geqslant x}$ is isomorphic to $\Flats(K_{k_1,\ldots,k_s})$ --- the geometric lattice of the graphical matroid corresponding to the complete multipartite graph $K_{k_1,\ldots,k_s}$.
\end{cor}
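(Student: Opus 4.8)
The plan is to obtain the statement as a direct combination of Proposition~\ref{propPartFlagConnStab} and Proposition~\ref{propAyzCherep}, so that the only genuine content is recognizing the linear matroid of tangent weights as a graphical matroid. First I would verify the hypotheses of Proposition~\ref{propAyzCherep}: the partial flag manifold $\Fl_{\bd}\cong U(n)/(U(k_1)\times\cdots\times U(k_s))$ is a smooth closed orientable connected manifold (being complex), and by Lemma~\ref{lemFixedPointPartFlags} the standard $T^n$-action on it has isolated fixed points. The one-dimensional non-effective kernel $\Delta(T^1)$ causes no trouble: all tangent weights vanish on it, and the lattice of flats of a linear matroid is unchanged when the ambient rational space $\Hom(T^n,S^1)\otimes\Qo$ is replaced by $\Hom(T^n/\Delta(T^1),S^1)\otimes\Qo$. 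Hence Proposition~\ref{propAyzCherep} applies and gives $S(\Fl_{\bd})_{\geqslant x}\cong\Flats(\{\alpha_x\})$, the lattice of flats of the linear matroid of tangent weights at $x$.

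Next I would feed in Proposition~\ref{propPartFlagConnStab}, which describes the tangent weights at $x=A_{J_1,\ldots,J_s}$ as $\alpha_x=\{e_i-e_j\mid i\in J_a,\ j\in J_b,\ a<b\}\subset\Qo^n$. The key observation is that this is exactly the collection $\beta(K_{J_1,\ldots,J_s})$ of Construction~\ref{conGeomLatticeOfGraph}: the complete multipartite graph $K_{J_1,\ldots,J_s}$ on the vertex set $[n]=J_1\sqcup\cdots\sqcup J_s$ has precisely one edge $\{i,j\}$ for each pair with $i\in J_a$, $j\in J_b$, $a\neq b$, and the implicit sign choice (present in both Construction~\ref{conTangentWeights} and Construction~\ref{conGeomLatticeOfGraph}, where weights respectively edge vectors are only defined up to sign) makes the two lists coincide. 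Therefore $\Flats(\{\alpha_x\})=\Flats(\beta(K_{J_1,\ldots,J_s}))=\Flats(K_{J_1,\ldots,J_s})$.

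Finally I would remove the dependence on the particular partition. Choosing any bijection $[n]\to[n]$ that carries each $J_i$ onto the $i$-th standard block of size $k_i$ induces a graph isomorphism $K_{J_1,\ldots,J_s}\cong K_{k_1,\ldots,k_s}$; the associated permutation of the basis $e_1,\ldots,e_n$ of $\Qo^n$ carries $\beta(K_{J_1,\ldots,J_s})$ onto $\beta(K_{k_1,\ldots,k_s})$ (again up to signs), yielding an isomorphism of lattices of flats $\Flats(K_{J_1,\ldots,J_s})\cong\Flats(K_{k_1,\ldots,k_s})$. Composing the three isomorphisms gives $S(\Fl_{\bd})_{\geqslant x}\cong\Flats(K_{k_1,\ldots,k_s})$. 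I do not expect a real obstacle in this argument: the substantive work is already carried out in the two cited propositions, and what remains is bookkeeping of sign conventions together with the standard fact that a graphical matroid, hence its lattice of flats, depends only on the graph up to isomorphism.
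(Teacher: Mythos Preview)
Your proposal is correct and follows exactly the paper's own approach: the corollary is stated as an immediate consequence of Proposition~\ref{propPartFlagConnStab} together with Proposition~\ref{propAyzCherep}, after observing that the tangent weights~\eqref{eqRootsMultiPart} are precisely the vectors $\beta(K_{J_1,\ldots,J_s})$ of the graphical matroid of the complete multipartite graph. Your additional care about the non-effective kernel and the relabelling $K_{J_1,\ldots,J_s}\cong K_{k_1,\ldots,k_s}$ just makes explicit what the paper leaves implicit.
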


\begin{rem}\label{remFaceOfPartFlags}
Notice that an element of $\Flats(K_{k_1,\ldots,k_s})$ is a clustering $\ca{C}=\{V_1,V_2,\ldots\}$ of a complete multipartite graph. Each induced subgraph $\Flats(K_{k_1,\ldots,k_s})|_{V_i}$ is a complete multipartite graph itself. The face submanifold corresponding to such element $\Ca{C}$ is diffeomorphic to the product of partial flag manifolds corresponding to all clusters.
\end{rem}

Now we introduce the analogue of cluster-permutohedron for partial flags, cf. Example~\ref{exClusterIdent}.

\begin{con}\label{conPartFlagsPoset}
Let ${[n]\choose k_1,\ldots,k_s}$ denote the set of all ordered subdivisions of $[n]$ into labelled parts of sizes $k_1,\ldots,k_s$. The set ${[n]\choose k_1,\ldots,k_s}$ is identified with the right coset $\cong\Sigma_n/(\Sigma_{k_1}\times\cdots\times\Sigma_{k_s})$ of the symmetric group. The symmetric group $\Sigma_n$ naturally acts on ${[n]\choose k_1,\ldots,k_s}$. Consider the poset $S=\Flats(K_{k_1,\ldots,k_s})$ and the characteristic map $\Lambda\colon S\to \sbgrps(\Sigma_n)$, $\Lambda(\ca{C})=\Sigma_{\ca{C}}$, as in Example~\ref{exClusterIdent}. Then we get the symmetric identification poset
\begin{equation}\label{eqPartFlagsPoset}
\Flats(K_{k_1,\ldots,k_s})\times_\Lambda {[n]\choose k_1,\ldots,k_s}.
\end{equation}
\end{con}

\begin{prop}\label{propMatrMultiplicities}
The poset of faces of the partial flag manifold $\Fl_{\bd}$ is isomorphic to the poset~\eqref{eqPartFlagsPoset}.
\end{prop}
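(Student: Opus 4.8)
The plan is to mimic the proof of Proposition~\ref{propFacePoset} almost verbatim, using the structural facts about the $T$-action on $\Fl_{\bd}$ established in Lemma~\ref{lemFixedPointPartFlags}, Proposition~\ref{propPartFlagConnStab}, and Corollary~\ref{corMatroidPartFlags}. First I would identify, for each clustering $\ca{C}=\{V_1,\ldots,V_k\}$ of the complete multipartite graph $K_{k_1,\ldots,k_s}$, the subtorus $T_{\ca{C}}=\Delta_{V_1}(T^1)\times\cdots\times\Delta_{V_k}(T^1)$ and describe the fixed submanifold $\Fl_{\bd}^{T_{\ca{C}}}$. As in Remark~\ref{remFaceOfPartFlags}, this fixed submanifold decomposes as a product of smaller partial flag manifolds, one for each cluster $V_i$; the cluster $V_i$ inherits from the ambient multipartition a multipartition of its own (the intersection of $V_i$ with each part $J_a$), and the partial flag manifold attached to $V_i$ is of the corresponding type. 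The connected components of $\Fl_{\bd}^{T_{\ca{C}}}$ are then indexed by the ways of distributing the ``color data'' among the clusters, i.e. by how each element $i\in[n]$ gets assigned to a labelled part $J_a$ compatibly with the block structure.

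The second step is to make precise that these connected components are classified by the cosets occurring in the symmetric identification poset~\eqref{eqPartFlagsPoset}. Concretely: a point $p\in{[n]\choose k_1,\ldots,k_s}$ is an ordered subdivision, equivalently a coset in $\Sigma_n/(\Sigma_{k_1}\times\cdots\times\Sigma_{k_s})$; two such $p_1,p_2$ give the same connected component of $\Fl_{\bd}^{T_{\ca{C}}}$ if and only if they lie in the same $\Sigma_{\ca{C}}$-orbit on ${[n]\choose k_1,\ldots,k_s}$. The ``if'' direction follows by connecting two coordinate flags that differ by a transposition inside a single cluster $V_i$ through an invariant $2$-sphere (a $\mathbb{C}P^1$ of partial flags varying only inside the block), then invoking connectivity of Cayley graphs of the individual blocks, exactly as in the proof of Lemma~\ref{lemConnectivity} and item~1 of Proposition~\ref{propFacePoset}; note that whenever a transposition swaps two indices lying in the \emph{same} labelled part $J_a$, it already acts trivially on ${[n]\choose k_1,\ldots,k_s}$, so the relevant swaps are those between distinct parts, which correspond precisely to edges of $K_{k_1,\ldots,k_s}$ inside $V_i$. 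The ``only if'' direction is the separation argument of item~2 of Proposition~\ref{propFacePoset}: if two indices in different clusters are assigned to the same labelled part by $p_1$ versus $p_2$, the two components carry genuinely different flag-dimension data and cannot coincide.

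Having matched fixed submanifolds of the $T_{\ca{C}}$'s (and their face-containing components) with pairs $(\ca{C},\ca{A})$ where $\ca{A}$ is a $\Sigma_{\ca{C}}$-orbit on ${[n]\choose k_1,\ldots,k_s}$, I would check that the partial order agrees: $X_{(\ca{C}',\ca{A}')}\subseteq X_{(\ca{C},\ca{A})}$ iff $\ca{C}'\leqslant\ca{C}$ in $\Flats(K_{k_1,\ldots,k_s})$ and $\ca{A}'$ maps onto $\ca{A}$ under the orbit projection induced by $\Sigma_{\ca{C}'}\hookrightarrow\Sigma_{\ca{C}}$ — this is exactly the order defining the symmetric identification poset in Construction~\ref{conIdentSetPoset}. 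Finally I would observe that Corollary~\ref{corMatroidPartFlags} guarantees consistency at the level of upper ideals: $S(\Fl_{\bd})_{\geqslant x}\cong\Flats(K_{k_1,\ldots,k_s})$ for each fixed point $x$, which is the ``fiber over a minimal element'' of the poset~\eqref{eqPartFlagsPoset}, so the two graded posets agree everywhere.

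I expect the main obstacle to be the bookkeeping in the ``if'' direction: one must verify that $\Fl_{\bd}^{T_{\ca{C}}}$ really is the product of the expected smaller partial flag manifolds and that inside a single block $V_i$ the coordinate flags corresponding to block-internal permutations all lie in one connected component. The product decomposition requires unwinding how the conditions $0\subset V_{d_1}\subset\cdots$ restrict to the $T_{\ca{C}}$-fixed locus and reindexing the flag-dimension jumps within each block; this is routine but notationally heavy, and it is the only place where the argument genuinely uses more than the formal template of Proposition~\ref{propFacePoset}. Everything else — the order-comparison, the counting of components, the separation argument — transfers directly. For this reason I would write the proof as: ``The proof is parallel to that of Proposition~\ref{propFacePoset}, using Proposition~\ref{propPartFlagConnStab}, Corollary~\ref{corMatroidPartFlags}, and Remark~\ref{remFaceOfPartFlags} in place of the corresponding facts about isospectral matrices,'' and spell out only the block-decomposition step in detail.
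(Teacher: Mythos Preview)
Your proposal is correct and thorough, but it takes a different route from the paper's one-line argument. The paper simply identifies $\Fl_{\bd}$ with the manifold of all Hermitian $n\times n$ matrices having eigenvalues $\lambda_1,\ldots,\lambda_s$ with multiplicities $k_1,\ldots,k_s$ (the ``degenerate spectrum'' instance of Example~\ref{exFullFlagVar}), and then invokes the matrix-manifold argument of Proposition~\ref{propFacePoset}/\ref{propClusProperties} verbatim. In that model the block-decomposition, the $2$-sphere connecting argument, and the separation of components all come for free from the matrix picture, so nothing new needs to be checked.

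Your approach instead works intrinsically with flags, using Proposition~\ref{propPartFlagConnStab}, Corollary~\ref{corMatroidPartFlags}, and Remark~\ref{remFaceOfPartFlags} as the structural input. This is exactly the strategy the paper adopts in the more general Theorem~\ref{thmMegaDescription} (yours is the special case $\Gamma=K_s$), and the paper even remarks after Proposition~\ref{propMatrMultiplicities} that ``a more general statement and more detailed proof are given'' there. The trade-off: the paper's matrix shortcut is cleaner here because it literally reduces to already-proved propositions, whereas your flag-based argument is the one that survives the generalization---Remark~\ref{remDegenerateSpectra} explains that for $\Gamma\neq K_s$ there is no matrix twin of $\Mbd'_{\Gamma,\lambda}$, so the intrinsic argument you outline becomes unavoidable in that setting.
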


\begin{proof}
The manifold $\Fl_{\bd}$ is identified with the manifold of all Hermitian matrices having eigenvalues $\lambda_1,\ldots,\lambda_s$ with multiplicities $k_1,\ldots,k_s$ respectively. With this done, the proof becomes similar to the proof of Proposition~\ref{propClusProperties}.
\end{proof}

A more general statement and more detailed proof are given in Theorem~\ref{thmMegaDescription} below. We now combine the constructions of cluster-permutohedra and partial flag variety.

\begin{con}
Let $k_1,\ldots,k_s$ be a sequence defining a complete bipartite graph $K_{k_1,\ldots,k_s}$. For a vertex $v$ of this graph let $\pa(v)$ denote the index of its part. Let $\Gamma=([s],E_\Gamma)$ be any graph on the set $[s]$ of parts. Let $\Gamma(k_1,\ldots,k_s)$ denote the subgraph of $K_{k_1,\ldots,k_s}$ which contains an edge $\{v,w\}$ if and only if $\{\pa(v),\pa(w)\}\in E_\Gamma$. Using the terminology of~\cite{Bjorner_Wachs_Welker:2005} $\Gamma(k_1,\ldots,k_s)$ is the inflation of $\Gamma$, where the $i$-th vertex is replaced by $k_i$ copies. Example is shown on Fig.~\ref{figInflated}, A. If, in addition, all edges within each part of $\Gamma(k_1,\ldots,k_s)$ are added, the resulting graph is denoted by $\tilde{\Gamma}(k_1,\ldots,k_s)$

%K_{k_1,\ldots,k_s}^\Gamma

A partial flag variety $\Fl_{\bd}$ determined by $k_1,\ldots,k_s$ may be identified with the space of pairwise orthogonal vector subspaces $L_1,\ldots,L_s$ of dimensions $k_1,\ldots,k_s$ respectively. Consider the subspace
\[
\Mbd'_{\Gamma,\lambda}=\left\{\{L_i\}\in \Fl_{\bd}\mid\forall i\in[s] \colon D_\lambda L_i\subset \bigoplus\nolimits_{j\in\str(i)} L_j\right\},
\]
where $D_\lambda$ is a diagonal matrix, cf. Construction~\ref{conTwinOfMatrixGen}. The space $\Mbd'_{\Gamma,\lambda}$ is stable under the $T$-action on $\Fl_{\bd}$ since the torus of diagonal matrices commutes with $D_\lambda$.
\end{con}

\begin{thm}\label{thmMegaDescription}
For generic $\lambda$, the space $\Mbd'_{\Gamma,\lambda}$ is a smooth submanifold. The face poset $S(\Mbd'_{\Gamma,\lambda})$ is isomorphic to
\begin{equation}\label{eqPartGraphPoset}
\Flats(\Gamma(k_1,\ldots,k_s))\times_\Lambda {[n]\choose k_1,\ldots,k_s},
\end{equation}
with characteristic function $\Lambda$ defined as in Construction~\ref{conPartFlagsPoset}.
\end{thm}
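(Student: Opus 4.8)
The plan is to prove the two assertions of the theorem separately — smoothness of $\Mbd'_{\Gamma,\lambda}$, and the identification of its face poset — each time reducing to the matrix picture and to results already in hand. For smoothness I would pass through the fibration $\pi_r\colon U(n)\to U(n)/H=\Fl_{\bd}$, where $H=U(k_1)\times\cdots\times U(k_s)$, writing a point $gH\in\Fl_{\bd}$ as the flag $\{gB_a\}$ with $B_1,\ldots,B_s$ the standard blocks of $[n]$ of sizes $k_1,\ldots,k_s$. A direct computation shows that $gH$ lies in $\Mbd'_{\Gamma,\lambda}$ if and only if the Hermitian matrix $g^{-1}D_\lambda g$ has vanishing $(a,b)$-block whenever $\{a,b\}\notin E_\Gamma$, i.e.\ if and only if $g^{-1}D_\lambda g$ is $\tilde{\Gamma}(k_1,\ldots,k_s)$-shaped. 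Hence $\pi_r^{-1}(\Mbd'_{\Gamma,\lambda})=\phi^{-1}\bigl(M_{\tilde{\Gamma}(k_1,\ldots,k_s),\lambda}\bigr)$, where $\phi\colon U(n)\to M_n$, $\phi(g)=g^{-1}D_\lambda g$, is the conjugation map, a fibre bundle onto $M_{K_n,\lambda}$ with fibre $T^n$, and $M_{\tilde{\Gamma}(k_1,\ldots,k_s),\lambda}$ is the isospectral manifold of $\tilde{\Gamma}(k_1,\ldots,k_s)$-shaped Hermitian matrices with the given (generic, distinct) spectrum $\lambda$, which is smooth by Construction~\ref{conGammaShapedMatrices}. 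Since $\pi_r$ and $\phi$ are submersions and the residual right $H$-action on $U(n)$ is free, smoothness of $M_{\tilde{\Gamma}(k_1,\ldots,k_s),\lambda}$ transfers to $\Mbd'_{\Gamma,\lambda}$ as an embedded submanifold of $\Fl_{\bd}$.

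For the face poset I would follow the proofs of Propositions~\ref{propFacePoset} and~\ref{propPartFlagConnStab}. The fixed points are the coordinate flags $A_{J_1,\ldots,J_s}$ — all of which lie in $\Mbd'_{\Gamma,\lambda}$ because $D_\lambda$ preserves coordinate subspaces — so the fixed point set is naturally ${[n]\choose k_1,\ldots,k_s}$, as in Lemma~\ref{lemFixedPointPartFlags}. Linearising the conditions $D_\lambda L_a\subset\bigoplus_{j\in\str(a)}L_j$ at $A_{J_\bullet}$ exactly as in Proposition~\ref{propTangRepMatrices} shows that the surviving tangent weights are $\{e_i-e_j\mid i\in J_a,\ j\in J_b,\ \{a,b\}\in E_\Gamma\}$, that is, the linear matroid of the inflation graph $\Gamma(J_1,\ldots,J_s)$; by Proposition~\ref{propAyzCherep} this yields $S(\Mbd'_{\Gamma,\lambda})_{\geqslant A_{J_\bullet}}\cong\Flats(\Gamma(k_1,\ldots,k_s))$. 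Next I would describe the face submanifolds globally: for a clustering $\ca C=\{W_1,\ldots,W_m\}$ the subtorus $T_{\ca C}=\Delta_{W_1}(T^1)\times\cdots\times\Delta_{W_m}(T^1)$ fixes a flag $\{L_a\}$ precisely when each $L_a$ splits as $\bigoplus_r\bigl(L_a\cap\Co^{W_r}\bigr)$, so that $(\Mbd'_{\Gamma,\lambda})^{T_{\ca C}}$ decomposes, over the intersection types $\bigl(\dim(L_a\cap\Co^{W_r})\bigr)_{a,r}$, into products over the clusters $W_r$ of smaller manifolds of the same kind inside $\Co^{W_r}$, governed by $D_\lambda|_{\Co^{W_r}}$ and by $\Gamma$ restricted to the parts actually met by $W_r$. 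The component through $A_{J_\bullet}$ is the factor of type $\bigl(|J_a\cap W_r|\bigr)$, and — by a connectivity argument carried out cluster by cluster in the spirit of Lemma~\ref{lemConnectivity}, together with the distinctness of $\lambda$ as in the second half of the proof of Proposition~\ref{propFacePoset} — it is a single connected component, hence a genuine face submanifold, exactly when $\Gamma(J_\bullet)|_{W_r}$ is connected for every $r$, i.e.\ exactly when $\ca C$ is a flat of $\Gamma(J_\bullet)$; its fixed points are then $\{A_{\sigma\cdot J_\bullet}\mid\sigma\in\Sigma_{\ca C}\}$, the $\Sigma_{\ca C}$-orbit of the corresponding element of ${[n]\choose k_1,\ldots,k_s}$.

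Finally I would assemble these observations. Each face submanifold is recorded by a pair consisting of a clustering which is a flat of the inflation graph at one — equivalently any — of its fixed points, together with the $\Sigma_{\ca C}$-orbit of fixed points it contains; transporting clusterings by the $\Sigma_n$-action to the reference inflation $\Gamma(k_1,\ldots,k_s)$ turns this set of pairs, with the induced order, into $\Flats(\Gamma(k_1,\ldots,k_s))\times_\Lambda{[n]\choose k_1,\ldots,k_s}$ with $\Lambda(\ca C)=\Sigma_{\ca C}$, as in Construction~\ref{conPartFlagsPoset}. As consistency checks, the case $\Gamma=K_s$ reproduces Proposition~\ref{propMatrMultiplicities}, and the case $k_1=\cdots=k_s=1$ reproduces Corollary~\ref{corTwinMatrixPoset} via Example~\ref{exClusterIdent}.

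I expect the main obstacle to be the last two steps: pinning down the connected components of $(\Mbd'_{\Gamma,\lambda})^{T_{\ca C}}$ and verifying that the resulting gluing is exactly the symmetric identification poset. In contrast to the full-flag situation, where a clustering is literally a block partition of a matrix, here it interacts with the spectral multiplicities $k_1,\ldots,k_s$: a face submanifold is a product of smaller partial-flag-type manifolds indexed by the clusters, and one must keep track both of how these multiplicities redistribute among the clusters and of the left/right coset conventions that were already delicate in the definitions of cluster-permutohedra and graphicahedra. The connectivity has to be established one cluster at a time, using that a cluster, being a flat, induces a connected restricted inflation subgraph, while the distinctness of $\lambda$ is what keeps distinct intersection types in distinct faces.
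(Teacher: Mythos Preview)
Your proposal is correct and follows essentially the same route as the paper's proof. The only notable variation is in the smoothness argument: you pull back along $\pi_r\colon U(n)\to \Fl_{\bd}$ directly to the isospectral matrix space, whereas the paper factors through the forgetful fibration $p\colon\Fl(\Co^n)\to\Fl_{\bd}$ and identifies $p^{-1}(\Mbd'_{\Gamma,\lambda})$ with the twin manifold $M'_{\tilde{\Gamma}(k_1,\ldots,k_s),\lambda}$ before invoking the submersion claim; these are equivalent since the twin itself is defined via $U(n)$. For the face poset, your analysis of fixed points, tangent weights, and the components of $(\Mbd'_{\Gamma,\lambda})^{T_{\ca C}}$ as products over clusters is exactly the content of the paper's steps~I--III, and your description of faces by $\Sigma_{\ca C}$-orbits in ${[n]\choose k_1,\ldots,k_s}$ is the paper's bicoset parametrization $\Sigma_{\ca C}\backslash\Sigma_n/(\Sigma_{k_1}\times\cdots\times\Sigma_{k_s})$.
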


\begin{proof}
Let us prove smoothness. Notice that there is a smooth fibration $p\colon\Fl(\Co^n)\to\Fl_{\bd}$, which forgets some intermediate subspaces in a full flag. If a flag is represented as a collection of orthogonal lines (see~\eqref{eqFlagOrthogonal}), the map $p$ operates as follows
\[
p\colon (L_1,\ldots,L_n)\mapsto \left(\bigoplus_{i=1}^{k_1}L_i,\bigoplus_{i=k_1+1}^{k_2}L_i,\ldots,\bigoplus_{i=n-k_s+1}^{n}L_i\right)
\]
It is easily seen that the preimage of the subspace $\Mbd'_{\Gamma,\lambda}$ under $p$ coincides with
\[
M'_{\tilde{\Gamma}(k_1,\ldots,k_s),\lambda}
\]
--- the twin of the isospectral manifold, see Construction~\ref{conTwinOfMatrixGen} and Proposition~\ref{propTwinsMatrices}. The isospectral space $M_{\tilde{\Gamma}(k_1,\ldots,k_s),\lambda}$ is smooth for generic $\lambda$, see Construction~\ref{conGammaShapedMatrices}. Therefore its twin $M'_{\tilde{\Gamma}(k_1,\ldots,k_s),\lambda}$ is also smooth for the same $\lambda$ according to Remark~\ref{remSmoothnessOfTwins}. Now the smoothness of $\Mbd'_{\Gamma,\lambda}=p(M'_{\tilde{\Gamma}(k_1,\ldots,k_s),\lambda})$ is proved by the following

\begin{clai}
If $p\colon X\to Y$ is a smooth fibration (hence a submersion) and $A\subset Y$ is a subspace such that $p^{-1}(A)$ is a smooth submanifold in $X$, then $A$ is a smooth submanifold in $Y$.
\end{clai}

The claim follows easily from the submersion theorem. It proves the smoothness of $\Mbd'_{\Gamma,\lambda}$ for generic $\lambda$.

Now let us prove the statement about its face poset.

\textbf{I.} First notice that $T$-fixed points of both $\Mbd'_{\Gamma,\lambda}$ and $\Fl_{\bd}$ are represented by coordinate partial flags of type $\bd$, they can be identified with the set ${[n]\choose k_1,\ldots,k_s}$. In other words, the minimal elements of both $S(\Mbd'_{\Gamma,\lambda})$ and $S(\Fl_{\bd})$ correspond to the ways of putting the balls numbered $1,2,\ldots,n$ in the boxes of sizes $k_1,\ldots,k_s$. It is allowed to mix balls in each box.

\textbf{II.} The edges of $S(\Fl_{\bd})$ have the following description, which generalizes the way the Johnson graph appears as the GKM-graph of a Grassmann manifold (see~\cite[\S 9.2]{Guilleminn_Zara:2001}). Two distributions of balls are connected by an edge in $S(\Fl_{\bd})$, if one is obtained from another by exchanging a pair of balls from different boxes. If we exchanged the balls $i$ and $j$, then the weight of the corresponding edge equals $\pm(\epsilon_i-\epsilon_j)$ where $\epsilon_1,\ldots,\epsilon_n$ is the basis of the weight lattice. See Figure~\ref{figGrassmann} for an example $k_1=k_2=2$, which corresponds to the Grassmann manifold $\Gr_{4,2}$.

\begin{figure}[h]
\begin{center}
\includegraphics[scale=0.4]{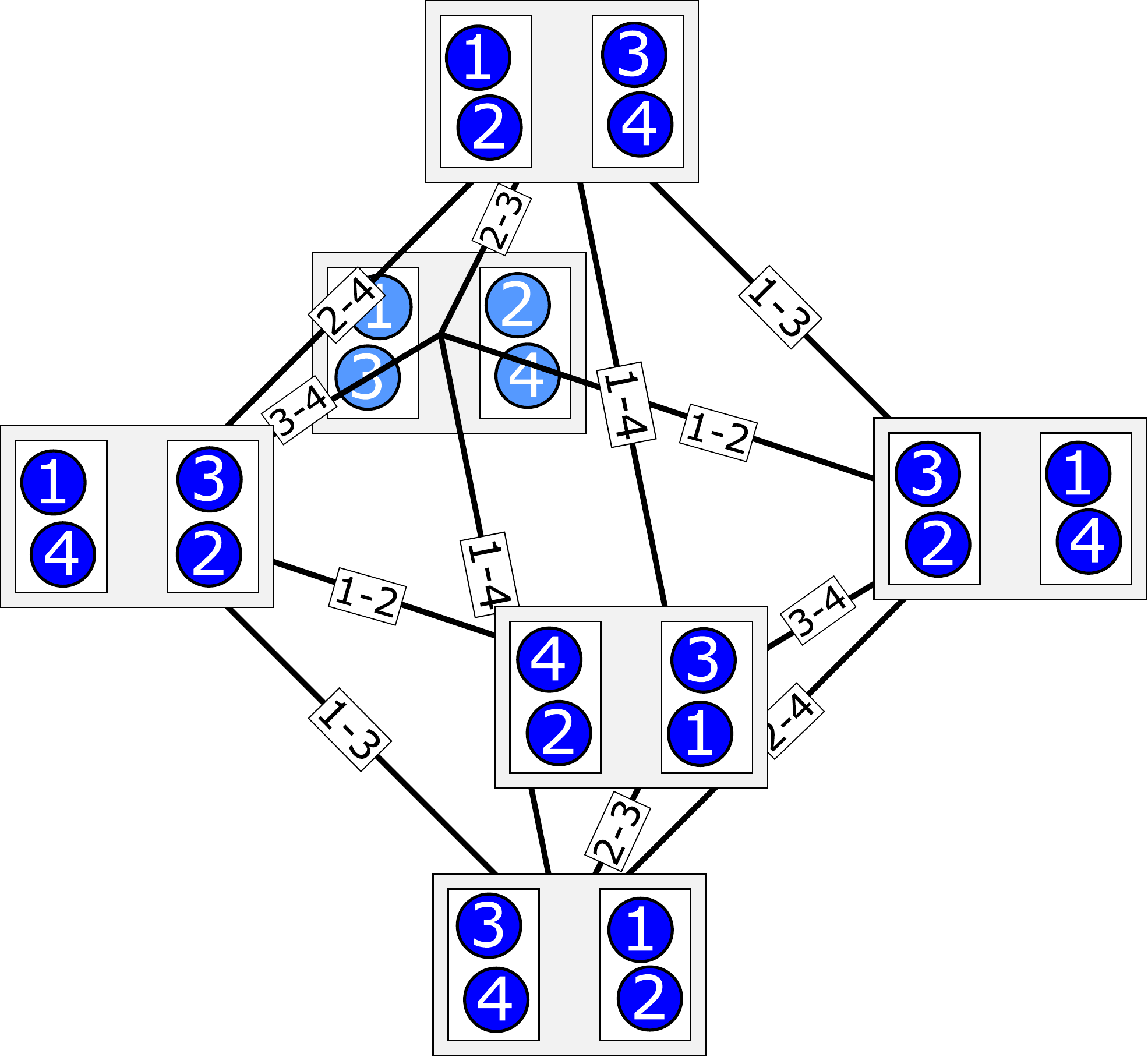}
\end{center}
\caption{GKM graph of the Grassmann manifold $\Gr_{4,2}$.}\label{figGrassmann}
\end{figure}

There exists an edge between two balls' distributions in $S(\Mbd'_{\Gamma,\lambda})$ if and only if we exchange the balls from two boxes, whose indices form an edge in $\Gamma$. More generally, if we have a clustering

Let $x\in S(\Mbd'_{\Gamma,\lambda})$ be a fixed point encoded by balls' distribution $(J_1,\ldots,J_s)$ which means that $i$-th box contains the subset of balls $J_i\subset[n]$, $|J_i|=k_i$. Then the weights of edges incident to $x$ in $S(\Mbd'_{\Gamma,\lambda})$ have the form
\[
\{\pm(\epsilon_r-\epsilon_s)\mid r\in J_i, s\in J_j,\mbox{ and }\{i,j\}\in E_\Gamma\}.
\]
Therefore the local poset $S(\Mbd'_{\Gamma,\lambda})_{\geqslant x}$ is isomorphic to the geometric lattice of the graphical matroid corresponding to $\Gamma(k_1,\ldots,k_s)$.

\textbf{III.} With the preliminary work done, let us describe the isomorphism between $\ca{P}=\Flats(\Gamma(k_1,\ldots,k_s))\times_\Lambda {[n]\choose k_1,\ldots,k_s}$ and $\ca{F}=S(\Mbd'_{\Gamma,\lambda})$.

Notice, that an element of $\ca{P}$ is a pair $(\ca{C},\kappa)$, where $\ca{C}=(V_1,\ldots,V_l)$ is a clustering of $\Gamma(k_1,\ldots,k_s)$ (i.e. a flat of the corresponding graphical matroid), and $\kappa\in \Sigma_{\ca{C}}\backslash \Sigma_n/(\Sigma_{k_1}\times\cdots\times\Sigma_{k_s})$ is a bicoset. Indeed, the set ${[n]\choose k_1,\ldots,k_s}$ is isomorphic to $\Sigma_n/(\Sigma_{k_1}\times\cdots\times\Sigma_{k_s})$, while the additional quotient from the left arises from the characteristic function $\Lambda$. Now we construct the face submanifold of $\Mbd'_{\Gamma,\lambda}$ corresponding to $(\ca{C},\kappa)$.

As before, if $v$ is a vertex of $\Gamma(k_1,\ldots,k_s)$, let $\pa(v)\in[s]$ denote the index of the part where $v$ lies (i.e. the vertex of $\Gamma$ whose inflation produces $v$). Any clustering $\ca{C}=(V_1,\ldots,V_l)$ of $\Gamma(k_1,\ldots,k_s)$ produces the set of numbers $k_i(V_j)=\#\{v\in V_j\mid \pa(v)=i\}$ for $i\in[s]$, $j=1,\ldots,l$. Notice that these numbers are invariants of the right $\Sigma_{k_1}\times\cdots\times\Sigma_{k_s}$-action on $\Flats(\Gamma(k_1,\ldots,k_s))$: if one permutes objects inside each box, their number does not change. See example shown on Fig.~\ref{figInflated}, B.

\begin{figure}[h]
\begin{center}
\includegraphics[scale=0.32]{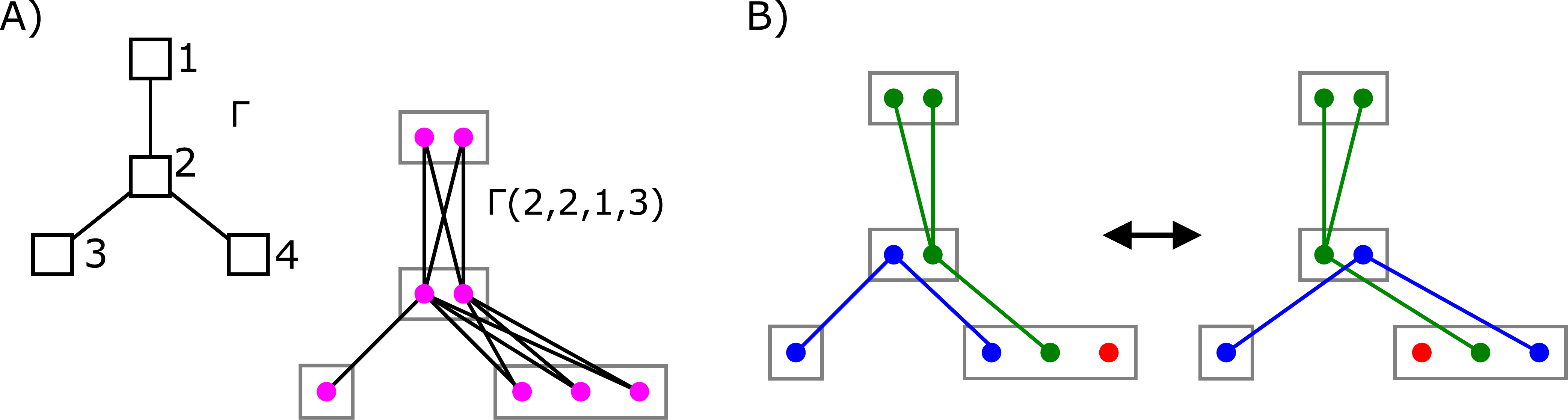}
\end{center}
\caption{(A) An example of a vertex-inflation of a graph. (B) An example of a clustering in an inflated graph. %Permutations within each box change the clustering, however, the cardinalities $k_i(V_j)$ of each cluster-specific elements in each box do not change.
}\label{figInflated}
\end{figure}

Let $\kappa$ be the bicoset of $\sigma\in\Sigma_n$. For each cluster $V_j$, $j=1,\ldots,l$, consider the coordinate subspace $\Co^{\sigma(V_j)}$ of $\Co^n$ and the manifold
\[
\prescript{\overline{k(V_j)}}{\sigma}{M'_{\Gamma,\lambda}}=\{(L_{1,j},\ldots,L_{s,j})\},
\]
where $L_{i,j}$ are mutually orthogonal subspaces of $\Co^{\sigma(V_j)}$, $\dim_\Co L_{i,j}=k_i(V_j)$, and $D_\lambda L_{i,j}\subseteq \bigoplus_{i'\in\str(i)}L_{i',j}$. Finally, consider the submanifold of $\Mbd'_{\Gamma,\lambda}$ defined as the connected component of
\[
X_{(\ca{C},\kappa)}=\left\{(L_1,\ldots,L_s)\mid \forall i\colon L_i=\bigoplus_{j=1}^l L_{i,j},\mbox{ where }\forall j\colon (L_{i,j})\in \prescript{\overline{k(V_j)}}{\sigma}{M'_{\Gamma,\lambda}}\right\}.
\]
containing a fixed point. It can be seen that $X_{(\ca{C},\kappa)}$ depends only on a bicoset $\kappa\in\Sigma_{\ca{C}}\backslash \Sigma_n/(\Sigma_{k_1}\times\cdots\times\Sigma_{k_s})$, not on the permutation $\sigma$. Indeed, we are allowed to permute entries within each cluster, since this does not affect $\sigma(V_j)$ in the definition. On the other hand, we are allowed to permute entries in boxes, since this does not affect the numbers $k_i(V_j)$ in the definition. It can be seen that $X_{(\ca{C},\kappa)}$ is a face submanifold with the isotropy group
\[
T_{\sigma(\ca{C})}=\Delta_{\sigma(V_1)}(T^1)\times\cdots\times \Delta_{\sigma(V_s)}(T^1),
\]
for the diagonal maps $\Delta_{\sigma(V_j)}\colon T^1\to T^{\sigma(V_j)}$.

We define the map of posets $h\colon \ca{P}\to \ca{F}$, by setting $h((\ca{C},\kappa))=X_{(\ca{C},\kappa)}$. It is easily seen, that the map preserves the order. Minimal elements of $\ca{P}$ map bijectively to minimal elements of $\ca{F}$ according to item I. For each minimal element $x\in\ca{P}$ the poset $\ca{P}_{\geqslant x}$ maps isomorphically to $\ca{F}_{\geqslant h(x)}$, since both posets are isomorphic to $\Flats(\Gamma(k_1,\ldots,k_s))$. Since every face submanifold contains a fixed point, the latter fact implies the surjectivity of $h$. The injectivity of $h$ follows from the precise construction of $h$ and left as an exercise. This finishes the proof of item III. Theorem~\ref{thmMegaDescription} is now proved.
\end{proof}

\begin{rem}\label{remLookOfFacesGeneral}
It can be seen from the proof above, that the face submanifold $X_{(\ca{C},\kappa)}$ of $\Mbd'_{\Gamma,\lambda}$ is diffeomorphic to the direct product
\[
\prod_{j=1}^{l}\left(\prescript{\overline{k(V_j)}}{}{M'_{\Gamma,\lambda}}\right)
\]
of the manifolds having similar nature. Notice that some numbers $k_i(V_j)$ may vanish: the corresponding entries should be omitted. In particular, this shows that, in a partial flag variety some faces may be varieties of flags of smaller length.
\end{rem}

\begin{rem}\label{remDegenerateSpectra}
Notice that there is no twin analogue of $\Mbd'_{\Gamma,\lambda}$ defined via isospectral matrices. Twins are, by construction, the subsets of a full flag variety, while $\Mbd'_{\Gamma,\lambda}$ is generally a subset of the partial flag variety.

It is tempting to define $\Mbd_{\Gamma,\lambda}$ as the space of all isospectral matrices of certain shape, where eigenvalues have multiplicities $k_1,\ldots,k_s$, as was done in the proof of Proposition~\ref{propMatrMultiplicities}. However, if the matrices have zeroes at some positions, such space may not be a manifold, even for a generic nonsimple spectrum. This fact was observed by Tomei~\cite{Tomei:1984}, who noticed that the space of isospectral tridiagonal matrices with degenerate spectrum is never a manifold.
\end{rem}

\begin{rem}
Notice that the 1-skeleta of the posets~\eqref{eqPartFlagsPoset} and~\eqref{eqPartGraphPoset} are examples of Schreier coset graphs~\cite{Schreier:1927}. These are the analogues of Cayley graphs, in which the rights cosets $G/H$ are the vertices, and the edges correspond to multiplication by a fixed collection of generators of $G$. In our examples related to partial flags we use $G=\Sigma_n$ and $H=\Sigma_{k_1}\times\cdots\times\Sigma_{k_s}$. Therefore, the posets~\eqref{eqPartFlagsPoset} and~\eqref{eqPartGraphPoset} can be considered as higher-dimensional structures extending Schreier graphs.

More generally, the symmetric identification sets (Construction~\ref{conIdentSetBasic}) can be considered as higher dimensional generalizations of Schreier/Cayley graphs.
\end{rem}

The theory described in this section suggests the following generalization of Problem~\ref{probMainFormality}.

\begin{probl}\label{probPartial}
For which graphs $\Gamma$ and positive numbers $k_1,\ldots,k_s$, the torus action on a generic manifold $\Mbd'_{\Gamma,\lambda}$ is cohomologically equivariantly formal?
\end{probl}

Our general conjecture is that the answer to Problems~\ref{probMainFormality},\ref{probLieTypesGen},\ref{probPartial}, stated in this paper is the following. The manifolds of all the listed classes are equivariantly formal if and only if they are either projective varieties (admitting Bialynicki-Birula decomposition) --- in case of Problem~\ref{probPartial}, or the twins of such varieties --- in case of Problems~\ref{probMainFormality} and~\ref{probLieTypesGen}.

\section*{Acknowledgements}

The authors thank the anonymous referees for their comments on the first version of the paper.

\end{document}